\documentclass[a4paper]{amsart}

\usepackage{marginnote}
\usepackage{amsmath}
\usepackage{amsfonts}
\usepackage{amsthm}
\usepackage{amssymb}
\usepackage[all]{xy}
\usepackage[utf8]{inputenc}
\usepackage{mathrsfs}

\numberwithin{equation}{section}

\newtheorem{theorem}{Theorem}[section]
\newtheorem{lemma}[theorem]{Lemma}
\newtheorem{proposition}[theorem]{Proposition}
\newtheorem{corollary}[theorem]{Corollary}
\newtheorem{definition}[theorem]{Definition}
\newtheorem{remark}[theorem]{Remark}

\DeclareMathOperator{\Lie}{Lie}

\DeclareMathOperator{\ad}{ad}
\DeclareMathOperator{\Ad}{Ad}

\DeclareMathOperator{\supp}{supp}

\newcommand{\pr}[3]{\mathrm{pr}_{#2,#3}\left(#1\right)}
\newcommand{\res}[2]{{#1}_{|#2}}
\newcommand{\tg}[2]{\mathrm{T}_{#2}(#1)}
\newcommand{\cent}[2]{\mathfrak{z}_{#1}(#2)} 
\newcommand{\inv}[2]{#1^{\mathrm{#2}}}
\newcommand{\len}[1]{\lVert #1 \rVert}

\newcommand{\biform}[2]{\langle #1,#2\rangle}
\newcommand{\conj}[1]{#1^{\mathrm{conj}}}


\newcommand{\SL}{\mathrm{SL}}
\newcommand{\SO}{\mathrm{SO}}

\newcommand{\R}{\mathbb{R}}
\newcommand{\C}{\mathbb{C}}

\newcommand{\iu}{\mathrm{i}}

\newcommand{\rk}{\mathrm{rk}}

\newcommand{\sX}{\mathrm{X}}
\newcommand{\sY}{\mathrm{Y}}
\newcommand{\sZ}{\mathrm{Z}}
\newcommand{\sE}{\mathrm{E}}
\newcommand{\sS}{\mathrm{S}}

\newcommand{\id}{\mathrm{id}}

\newcommand{\bc}{\mathbf{c}}

\newcommand{\fg}{\mathfrak{g}}
\newcommand{\fk}{\mathfrak{k}}
\newcommand{\fs}{\mathfrak{s}}
\newcommand{\fq}{\mathfrak{q}}
\newcommand{\fa}{\mathfrak{a}}

\newcommand{\rG}{G}
\newcommand{\rK}{K}
\newcommand{\rA}{A}
\newcommand{\rN}{N}
\newcommand{\rM}{M}
\newcommand{\rW}{\mathfrak{w}}
\newcommand{\HC}{\mathrm{H}} 
\newcommand{\Rr}{\mathit{\Delta}}

\newcommand{\smo}{C^{\infty}}

\newcommand{\ct}{\tilde{t}}
\newcommand{\cx}{\tilde{x}}
\newcommand{\cka}{\tilde{\kappa}}

\newcommand{\crF}{\mathscr{F}}
\newcommand{\crFI}{\mathscr{F}_I}
\newcommand{\crH}{\mathscr{H}}
\newcommand{\crP}{\mathscr{P}}
\newcommand{\crS}{\mathscr{S}}

\newcommand{\invdiff}{\mathscr{L}_{\mathrm{Diff}}}

\newcommand{\DKV}{Duistermaat, Kolk and Varadarajan }

\newcommand{\quuad}{\quad\quad\quad\quad\quad\quad\quad\quad}

\begin{document}

\title{An estimate for spherical functions on $\SL(3,\R)$}
\author{Xiaocheng Li}
\address{Beijing International Center for Mathematical Research, Peking University, No. 5 Yiheyuan Road, Haidian District, Beijing, 100871, China P. R.}
\email{lixiaocheng@bicmr.pku.edu.cn}
\date{\today}

\maketitle

\begin{abstract}
We prove an estimate for spherical functions $\varphi_\lambda(a)$ on $\mathrm{SL}(3,\mathbb{R})$, establishing uniform decay in the spectral parameter $\lambda$ when the group variable $a$ is restricted to a compact subset of the abelian subgroup $\rA$. In the case of $\mathrm{SL}(3,\mathbb{R})$, it improves a result by J.J. Duistermaat, J.A.C. Kolk and V.S. Varadarajan by removing the limitation that $a$ should remain regular. As in their work, we estimate the oscillatory integral that appears in the integral formula for spherical functions by the method of stationary phase. However, the major difference is that we investigate the stability of the singularities arising from the linearized phase function by classifying their local normal forms when $\lambda$ and $a$ vary.


\end{abstract}

\tableofcontents

\section{Introduction}
\label{intro}

It is well-known that spherical functions, also called elementary spherical functions play many roles in representation theory and harmonic analysis on semisimple Lie groups, see Gangolli and Varadarajan \cite{GV} and Helgason \cite{Helg}. In this paper, we prove an asymptotic estimate for spherical functions on the simple Lie group $\SL(3,\R)$ which has real rank 2. 

\subsection{Notations}

In order to state the results, we are going to introduce the common setting of semisimple Lie groups for $\rG=\SL(3,\R)$. Fix an Iwasawa decomposition $\rG=\rK\rA \rN$, here $\rK$ is $\SO(3)$, $\rA$ is the subgroup of all diagonal matrices with positive diagonal entries, $\rN$ is the nilpotent subgroup of all upper triangular matrices with diagonal entries 1. Let $\fg,\fk,\fa$ be the Lie algebras of $\rG,\rK,\rA$, respectively.  The Iwasawa projection $\HC:\rG\to \fa$ is defined by that for $g\in \rG$, \[g=k\cdot \exp \HC(g)\cdot n,\quad  k\in \rK,n\in \rN.\]
Let $B$ denote the Killing form on $\fg$. Let $\theta$ be the Cartan involution of $\fg$, $\theta(X)=-X^T$ for $X\in\fg$. The Cartan decomposition associated to $\theta$ is $\fg=\fk+\fs$. A norm on $\fg$ is defined by $\len{X}_{\fg}^2=-B(X,\theta X)$ for $X\in \fg$. Let $\fq=\fs\ominus \fa$ be the orthogonal complement of $\fa$ in $\fs$ with respect to $B$. Let $\rM$, resp. $\rM'$ be the centralizer, resp. normalizer of $\fa$ in $\rK$. Let $\rW$ denote the Weyl group $\rM'/\rM$. Let $\Rr$ be the set of roots of $(\fg,\fa)$. For $\alpha\in \Rr$, $\fg_\alpha$ is the corresponding root space. According to the Iwasawa decomposition, we choose the positive system of roots $\Rr^+\subseteq \Rr$ and the positive Weyl chamber $\fa^+\subseteq \fa$.

Let $\crF$ be the complex dual of the complexification $\fa_\C$ of $\fa$. $\rW$ acts naturally on $\crF$. We denote by $\crFI$, resp. $\crF_R$ the $\R$-linear subspace of $\crF$ of all elements that take only purely imaginary, resp. real values on $\fa$. $\crFI$, resp. $\crF_R$ could be identified with $\iu \fa^*$, resp. $\fa^*$. For $\lambda\in \crF$, it could be written as $\lambda_R+\lambda_I$ with $\lambda_R\in \crF_R, \lambda_I\in\crFI$. Then the conjugation $\conj{\lambda}$ of $\lambda$ is $\lambda_R-\lambda_I$. Take the restriction of $B$ to $\fa\times \fa$, then extend it to a complex bilinear form $\biform{\cdot}{\cdot}$ on $\fa_\C\times \fa_\C$. We identify $\crF$ with $\fa_\C$ under $\biform{\cdot}{\cdot}$. For $\lambda\in \crF$, denote by $\lambda^\vee\in\fa_\C$ the corresponding element. Then we transfer  $\biform{\cdot}{\cdot}$ to $\crF\times \crF$. A norm $\len{\cdot}_{\crF}$ on $\crF$ is defined by $\len{\lambda}_{\crF}^2=\biform{\lambda}{\conj{\lambda}}$. For the sake of efficiency, we omit the subscripts in $\len{\cdot}_\fg$ and $\len{\cdot}_\crF$.

The integral formula for spherical functions proved by Harish-Chandra (see \cite[Proposition 3.1.4]{GV}) reads that for $\lambda\in \crF$
	\begin{equation}\label{eq:insp}
		\varphi_\lambda(g)=\int_{\rK}  e^{(\lambda-\rho)\HC(gk)}dk,\quad g\in \rG.
	\end{equation}
Here $\rho=\frac{1}{2}\sum_{\alpha\in \Rr^+} \dim(\fg_\alpha) \alpha\in \fa^*$, while $\dim(\fg_\alpha)=1$ for $\alpha\in \Rr^+$. For the spherical function $\varphi_\lambda(g)$, we call $g$ the group variable and $\lambda$ the spectral parameter. It is well-known that $\varphi_\lambda$ is $\rK$-bi-invariant. By $\rG=\rK\rA\rK$, we always consider $\varphi_\lambda(a)$ for $a\in\rA$.

\subsection{Main result} The asymptotic behavior of spherical functions when the group variable $g$ goes to infinity has been carefully studied, starting with the classical work of Harish-Chandra. However, some analysis problems require the understanding of the asymptotic behavior when the spectral parameter $\lambda$ goes to infinity. Here $\lambda$ goes infinity means that the imaginary part $\lambda_I$ goes to infinity while the real part $\lambda_R$ remains bounded. For our purpose, we could assume $\lambda_R=0$, that is, $\lambda\in \crFI$. One approach for this problem was carried out by \DKV \cite{DKV83}. They employed the method of stationary phase and achieved the estimate for spherical functions on general semisimple Lie groups. Their estimate is uniform in the spectral parameter $\lambda$ but becomes not sharp once allowing the group variable to be singular. Nevertheless, this paper is inspired by their work. 


The major aim of this paper is to prove the following asymptotic estimate for spherical functions on the specific group $\rG=\SL(3,\R)$ which improves the result implied by \cite{DKV83}.

\begin{theorem} \label{est-sphfun}
For any compact subset $\omega$ of $\fa$, we have
\begin{equation}\label{eq:bdsp}
	|\varphi_\lambda(\exp H)|\lesssim_\omega \sum_{s\in \rW} \Omega(s^{-1}H,-\iu\lambda^\vee)^{-1/2},\quad \forall H\in\omega,\lambda\in \crFI.
\end{equation}
Here $\Omega:\fa\times \fa\to  [1,\infty)$ is defined by
		\[\Omega(H,H'):=\prod_{\alpha\in \Rr^+} (1+|\alpha(H)\alpha(H')|).\]
\end{theorem}


The above estimate only deals with $\lambda\in \crFI$. Actually, Proposition \ref{est-2} implies an estimate when the real part $\lambda_R$ of $\lambda$ is bounded. In this paper, we adopt the notation $\lesssim$. The implicit constants could depend on some terms. We would only specify the terms which are of our interest.

To illustrate the difference between our result and \cite{DKV83}, we present the estimate implied by \cite[Theorem 11.1]{DKV83} here. Let $\omega$ be a compact subset of $\fa$. Put 
	$\Rr^+(\omega)=\{\alpha\in \Rr^+|\alpha(H)\neq 0, \forall H\in \omega\}$.
We have
	\begin{equation}\label{eq:wbdsp}
		|\varphi_\lambda(\exp H)|\lesssim_\omega \sum_{s\in \rW}\prod_{\alpha\in \Rr^+(s^{-1}\omega)}(1+|\alpha(-\iu\lambda^\vee)|)^{-1/2},
	\quad \forall H\in \omega, \lambda\in \crFI.
	\end{equation}

Comparing the estimates \eqref{eq:bdsp} and \eqref{eq:wbdsp}, our estimate does not involve $\Rr^+(s^{-1}\omega)$ and $\alpha(s^{-1}H)$ is inserted in front of $\alpha(-\iu\lambda^\vee)$. When $\omega$ is inside a Weyl chamber, then $|\alpha(s^{-1}H)|$ stays away from 0. So the two estimates are comparable after ignoring the implicit constants. On the contrary, when $0\in \omega$, $\Rr^+(s^{-1}\omega)=\emptyset$ and estimate \eqref{eq:wbdsp} only tells that the spherical function is bounded while estimate \eqref{eq:bdsp} still gives uniform decay.

Now we turn to an application of our estimate, that is given in Section 6. Here we summarize the background and the result. 

On general  compact Riemannian manifolds, the possible concentrations of Laplace-Beltrami eigenfunctions have been widely studied. One way to measure concentrations is to study $L^p$ norms  $(2\leq p\leq \infty)$. Sogge \cite{Sog} studied the growth of $L^p$ norms of eigenfunctions on the whole manifold. Burq, Gérard, and Tzvetkov \cite{BGT} studied the growth of  $L^p$ norms of the restrictions of eigenfunctions to submanifolds. For some cases the general estimates are sharp but for some other cases they are far from optimal. A lot of research have been done. 

A natural idea is to investigate similar problems under specific settings. Sarnak discussed the case of locally symmetric spaces in his letter to Morawetz \cite{Sar}. It was suggested \cite[Page 14]{Sar} that we coud gain a power saving for the growth of the $L^\infty$ norm of  joint eigenfunctions on locally symmetric spaces of higher rank. Under this direction, Marshall \cite{M} proved upper bounds for the $L^p$ norms $(2\leq p\leq \infty)$ of joint eigenfunctions, as well as their restrictions to maximal flat subspaces. We remark that  a crucial ingredient in the investigation \cite{M} is the asymptotic estimate for spherical functions.

Benefited from our Theorem \ref{est-sphfun}, we prove the following result which strengthens Marshall \cite[Theorem 1.2]{M} for the case $\sS=\SL(3,\R)/\SO(3)$ and $p=2$.

\begin{theorem}\label{est-eigen0}
Let $\sX$ be a compact locally symmetric space that is a quotient of  $\sS=\SL(3,\R)/\SO(3)$. Let $\sE$ be an open ball in a maximal flat subspace in $\sX$ that is sufficiently small. If $\psi\in \smo(\sX)$ is a joint eigenfunction with spectral parameter $\lambda\in \crF$ and $\len{\psi}_{L^2(\sX)}=1$, then we have
	\[\len{\psi}_{L^2(\sE)}\lesssim_{\sE} \Omega_0(-\iu \lambda_I^\vee)^{1/4}.\]
The implicit constant depends on $\sE$.  Here $\Omega_0:\fa\to [1,\infty)$ is defined by
	\[\Omega_0(H):=\prod_{\alpha\in \Rr^+}(1+|\alpha(H)|).\]
\end{theorem}
 
\subsection{Strategy} 

Let us discuss our approach to Theorem \ref{est-sphfun}. As in \cite{DKV83}, we study the following oscillatory integral
	\begin{equation}\label{eq:ost1}
		\int_\rK u(k) e^{\lambda(\HC(\exp H \cdot k))}dk, \quad H\in\fa, \lambda\in \crFI, u\in \smo(\rK,\C). 
	\end{equation}
Adopting the conventions of harmonic analysis, we call $-\iu\lambda( \HC(\exp H \cdot k))$ the phase function ($\lambda$ is imaginary so we multiply $-\iu$) and $u(k)$ the amplitude of this oscillatory integral. It is clear that an estimate for general $u$ implies an estimate for the spherical functions. A general uniform decay estimate could be established because the method of stationary phase does not demand accurate information of $u$. We mention that for fixed $H$ and $\lambda$, it is not hard to derive an $O(t^{-\frac{1}{2}n(H,\lambda)})$ estimate for the upper bound at $t\lambda$ as $t\to \infty$. The difficulty is to make the estimate uniform in both $H$ and $\lambda$, that is, allowing them to vary. A helpful observation is that $n(H,\lambda)$ changes when  $H$ and $\lambda$ vary.

Unlike in \cite{DKV83}, we do not study the oscillatory integral \eqref{eq:ost1} directly because the nonlinearity of the Iwasawa projection makes the analysis complicated, which is reflected in \cite{DKV83}. Instead, with the aid of a result by Duistermaat \cite{D}, we study a related oscillatory integral
	\begin{equation}\label{eq:ost2}
		\int_\rK u(k) \exp {\lambda\left[\pr{\Ad (k^{-1})H}{\fs}{\fa}\right]}dk,  \quad H\in\fa, \lambda\in \crFI, u\in \smo(\rK,\C). 
	\end{equation}
Here $\mathrm{pr}_{\fs,\fa}:\fs\to \fa$ is the orthogonal projection with respect to the restriction of $B$ to $\fs\times \fs$. Let $H'\in \fa$ denote $-\iu\lambda^\vee$. We rewrite the phase function as
	\begin{equation}\label{eq:phase}
		-\iu\lambda\left[\pr{\Ad (k^{-1})H}{\fs}{\fa}\right]=B(\Ad (k^{-1})H,H')=B(H,\Ad (k) H').
	\end{equation}
Here an interesting feature is that $H$ and $H'$ play almost the same role. 

The phase function $B(H,\Ad(k) H')$ could be expressed as $tB(H_1,\Ad (k) H'_1)$ for $\len{H_1}=\len{H'_1}=1$ and $t>0$. The decay of the oscillatory integral \eqref{eq:ost2} mainly results from the increase in $t$.  Meanwhile, $H_1,H'_1$ also influence the decay since the critical set of $B(H_1,\Ad (k) H'_1)$ varies in an `unstable' manner as $H_1,H'_1$ switch from singular elements to regular elements. As the integral region $\rK$ has dimension 3, this instability leads to obstacles to the quantitative study of the oscillatory integral.

The novelty of the article is to resolve the instability by showing that the singularity of the phase function \eqref{eq:phase} is actually stable as the parameters vary.  The stability of the singularity refers to stable descriptions of the behavior of the phase function \eqref{eq:phase} near the critical points. More precisely, we uniformize the phase function into local normal forms, for instance, quadratic polynomials, see Remark \ref{rem-ori}. Section 4 is devoted to this task and a delicate technique is developed there. As a consequence, we manage to classify all the local normal forms concerning our estimate. 


After sketching our approach, curious readers may wonder why we focus on the particular case $\rG=\SL(3,\R)$ instead of general semisimple Lie groups. Let us explain the reasons here. First, it is not hard to modify Theorem \ref{est-sphfun} for general semisimple Lie groups. We expect that the general version holds and the methods in this paper would be generalized. Second, we point out that the estimate in question is trivial when the real rank $\dim \fa$ of the semisimple Lie group is 1. Considering the higher rank cases, the author believes that the case for $\SL(3,\R)$ could serve as a prime example and a detailed study would provide helpful insights into general higher rank cases.


\subsection{Relevant results} 

Let us discuss some relevant literature besides \cite{DKV83} about the decay estimate for spherical functions in the spectral parameter. Stanton and Tomas obtained a classic result \cite[Theorem 2.1]{ST} for expansions of spherical functions on semisimple groups of real rank one. Clerc studied the generalized Bessel functions in \cite{C} which relates the oscillatory integral \eqref{eq:ost2}. Cowling and Nevo obtained a relevant result \cite[Theorem 1.1]{CN} but their setting requires complex semisimple Lie groups. As mentioned before, the estimate implied by \cite[Theorem 11.1]{DKV83} is trivial when $0\in \omega$, here $\omega$ is a compact subset of $\fa$. This weakness leads to difficulty in applications. 

In recent years, due to applications in number theory, in order to alleviate this weakness, the result and method in \cite{DKV83} have been reconsidered and some progress has been made. According to our knowledge, we mention following results for readers' convenience. Marshall obtained an estimate \cite[Theorem 1.3]{M} which is uniform in the group variable while the spectral parameter is required to remain regular. This result does not cover \cite[Theorem 11.1]{DKV83} but could be viewed as a complementary variant. The estimate obtained independently by Matz and Templier \cite[Proposition 8.2]{MT}, and by Blomer and Pohl \cite[Theorem 2]{BP} establishes the uniform decay allowing $0\in\omega$. But the exponent in their estimate is not sharp compared to \cite[Theorem 11.1]{DKV83}. Finis and Matz obtained an estimate \cite[Proposition A.1]{FM} refining the result implied by \cite[Theorem 11.1]{DKV83} by allowing $H$ to approach $0\in \fa$ in a conic neighborhood $[0,1]\cdot \omega$. The results above are achieved on general groups but do not surpass our work on $\SL(3,\R)$ in this article. If we restrict their results to $\SL(3,\R)$ and compare them with Theorem \ref{est-sphfun}, our Theorem \ref{est-sphfun} is stronger. Moreover, our approach is independent and new. 

\subsection{Organization of the paper}

The paper is organized as follows. 

Section 2 covers preliminary results in differential geometry. Besides settling notations and conventions, a notion of transversality and an operation of projection are introduced. 

Section 3 collects basic properties of the phase function \eqref{eq:phase}. Most of them could be found in \cite[Section 1]{DKV83}. 

Section 4 is the bulk of the article where we classify the local normal forms case by case. In order to avoid overlapping work, the order of the cases is arranged for efficiency. 

Section 5 deals with the estimates of the oscillatory integral \eqref{eq:ost2} and \eqref{eq:ost1}. Applying a multivariate stationary phase estimate, the local estimates for oscillatory integral \eqref{eq:ost2} are immediate when the local normal forms are at our disposal. 

Section 6 gives an application of Theorem \ref{est-sphfun} to the study of asymptotic behaviors of eigenfunctions on locally symmetric spaces.

\subsection{Acknowledgements} 
I would like to express my sincere gratitude to my Ph.D. advisor Professor Simon Marshall. Without his generous support and genuine encouragement, this work would not be done. I am indebted to Professor Tobias Finis for bringing the result by Duistermaat \cite{D} to my attention. The main part of the paper comprised my Ph.D. thesis when I was a graduate student at the Department of Mathematics at University of Wisconsin-Madison. Section 6 was carried out when I am a postdoc at Beijing International Center for Mathematical Research. I would like to acknowledge my gratefulness to both institutes. This research was partially supported by the National Science Foundation Grant DMS-1501230 and DMS-1902173.

%


\section{Terminology and tools from differential geometry}




All our manifolds are assumed to be smooth (in the $\smo$ sense), without boundary and finite-dimensional. They need not be connected. By a submanifold of a manifold, we mean that the inclusion map is an embedding. A submanifold need be neither a closed subset nor a closed manifold. All our maps including functions and vector fields are assumed to be smooth unless otherwise stated.

The sense of germs is extensively adopted in our study of local properties although it is not literally stated. We consider germs of maps including germs of vector fields and germs of submanifolds. We do not use the equivalence relation but simply restrict the domains of the maps to open neighborhoods of a given point. If we use $\sX$ to denote a manifold and $\sX$ appears several times in the argument, it may refer to different open submanifolds every time, especially when we introduce maps locally. The readers shall keep this point in mind to avoid confusions about the well-definedness. Although it may be helpful to know how large the domains could be, their local existence would suffice our needs in this article.

Now we introduce our notions of transversality between submanifolds and tangent vectors (or vector fields) and the projection of tangent vectors (or vector fields) to submanifolds. These tools are crucial to our reduction arguments from the ambient manifold to its submanifolds.  

Let $\sX$ be a manifold and let $\sY$ be a submanifold of $\sX$ with codimension $d$.  For $y\in \sY$, we identify the tangent space $\tg{\sY}{y}$ with a subspace of the tangent space $\tg{\sX}{y}$. So the tangent bundle $\tg{\sY}{}$ is identified with a submanifold of the tangent bundle $\tg{\sX}{}$. Let $f:\sX\to \C$. We denote its restriction to $\sY$ by $\res{f}{\sY}:\sY\to\C$. Let $v:\sX\to \tg{\sX}{}$ be a vector field on $\sX$. Its restriction to $\sY$, denoted by $\res{v}{\sY}:\sY\to \tg{\sX}{}$ is a section of $\tg{\sX}{}$ over $\sY$. If $v(y)\in \tg{\sY}{y}$ for all $y\in\sY$, then we call $v$ tangent to $\sY$ and can identify $\res{v}{\sY}$ with a vector field on $\sY$.

\begin{definition} \label{def-trans}
Let $y_0$ be a given point in $\sY$ and $X_1, X_2, \dots, X_d\in \tg{\sX}{y_0}$. The collection of tangent vectors $\{X_1, X_2, \dots, X_d\}$ is called transverse to $\sY$ at $y_0$ if
	\begin{equation} \label{eq:2-1}
		\tg{\sX}{y_0}=\tg{\sY}{y_0}+\R X_1+\R X_2+\dots+\R X_d,
	\end{equation}
here the right side is a direct sum. 
\end{definition}
 
\begin{definition} \label{def-proj}
Let the hypothesis of transversality in Definition \ref{def-trans} be fulfilled. For $X\in \tg{\sX}{y_0}$, its projection to $\sY$ with respect to $\{X_1,X_2,\dots,X_d\}$, denoted by $\pr{X}{\sX}{\sY}\in \tg{\sY}{y_0}$ is defined to be the unique tangent vector corresponding to the $\tg{\sY}{y_0}$-component in the summation, that is,
	\[X=\pr{X}{\sX}{\sY}+a_1X_1+a_2X_2+\dots+a_d X_d,\]
here $a_1,a_2,\dots,a_d\in \R$.
\end{definition}
It is needless to say that $X=\pr{X}{\sX}{\sY}$ when $X\in \tg{\sY}{y_0}\subseteq \tg{\sX}{y_0}$. The following lemma gives an alternative definition of the projection of tangent vectors to submanifolds. 
\begin{lemma} \label{lem-equi}
Let the hypothesis of transversality in Definition \ref{def-trans} be fulfilled. Let $\kappa:\sX\to \sY\times \R^d$ be a local fibration at $y_0$, that is, $\kappa(y)=(y,0),y\in \sY$. Suppose $\kappa$ satisfies \[d\kappa(\R X_1+\R X_2+\dots+\R X_d)=\tg{\R^d}{0}\subseteq \tg{\sY\times \R^d}{(y_0,0)}.\]
Let $\pi:\sX\to \sY$ be the composition of $\kappa$ and the projection $\sY\times \R^d\to \sY$. Then we have
	\[\pr{X}{\sX}{\sY}=d\pi(X),\quad X\in \tg{\sX}{y_0},\]
here $\pr{X}{\sX}{\sY}$ is the projection of $X$ to $\sY$ with respect to $\{X_1,\dots,X_d\}$.
\end{lemma}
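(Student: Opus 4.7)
The plan is to prove the lemma by computing $d\pi$ separately on the two types of summands appearing in the decomposition $X = \pr{X}{\sX}{\sY} + a_1 X_1 + \dots + a_d X_d$ coming from Definition \ref{def-proj}. Since $d\pi$ is linear, the identity $\pr{X}{\sX}{\sY} = d\pi(X)$ will follow once I verify that $d\pi$ acts as the identity on $\tg{\sY}{y_0}$ and annihilates each $X_i$.

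For the first point, I would observe that the defining condition $\kappa(y) = (y,0)$ for $y \in \sY$ forces $\pi \circ \iota_{\sY} = \id_{\sY}$, where $\iota_\sY : \sY \hookrightarrow \sX$ is the inclusion. Differentiating this identity at $y_0$ and using the identification $\tg{\sY}{y_0} \subseteq \tg{\sX}{y_0}$, I get $d\pi(Y) = Y$ for every $Y \in \tg{\sY}{y_0}$, which in particular applies to $Y = \pr{X}{\sX}{\sY}$.

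For the second point, the hypothesis $d\kappa(\R X_1 + \dots + \R X_d) = \tg{\R^d}{0} \subseteq \tg{\sY \times \R^d}{(y_0,0)}$ tells me that each $d\kappa(X_i)$ lies in the fiber direction of the trivial product, so it has zero component along $\tg{\sY}{y_0}$. Composing with the differential of the factor projection $\sY \times \R^d \to \sY$ kills exactly the $\tg{\R^d}{0}$ summand, giving $d\pi(X_i) = 0$ for each $i$.

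Combining these two computations and invoking linearity of $d\pi$ immediately yields $d\pi(X) = \pr{X}{\sX}{\sY}$. The argument is essentially a bookkeeping check and I do not expect any real obstacle; the only step worth being careful about is confirming that the direct-sum hypothesis \eqref{eq:2-1} together with the assumption on $d\kappa$ genuinely guarantees that the summands $\tg{\sY}{y_0}$ and $\R X_1 + \dots + \R X_d$ are mapped into complementary subspaces of $\tg{\sY \times \R^d}{(y_0,0)}$, so that the factor projection sees the decomposition coherently. Once the dimension count is written out this is automatic, and the lemma follows.
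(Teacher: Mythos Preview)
Your proof is correct and follows the natural approach. The paper itself does not supply a proof of this lemma, treating it as an elementary bookkeeping fact; your argument---checking that $d\pi$ is the identity on $\tg{\sY}{y_0}$ and annihilates each $X_i$, then invoking linearity---is exactly the expected verification.
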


The following handy lemma indicates how we will take advantage of the notions above.

\begin{lemma} \label{lem-id}

Let the hypothesis of transversality in Definition \ref{def-trans} be fulfilled. Let $f$ be a $\smo$ function on $\sX$ and $\res{f}{\sY}$ its restriction to $\sY$. If \[X_1f=X_2f=\dots=X_df=0,\] 
then we have 
	\[\pr{X}{\sX}{\sY}(\res{f}{\sY})=Xf, \quad X\in \tg{\sX}{y}.\] 
\end{lemma}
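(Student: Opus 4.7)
The statement is a direct computation once one unpacks the two definitions involved, so my plan is to simply trace through them and let the hypothesis $X_i f = 0$ eliminate the transverse terms.

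First, I would invoke Definition \ref{def-trans} together with Definition \ref{def-proj} to write any $X \in \tg{\sX}{y_0}$ uniquely as
\[
X = \pr{X}{\sX}{\sY} + a_1 X_1 + a_2 X_2 + \cdots + a_d X_d,
\]
with $\pr{X}{\sX}{\sY} \in \tg{\sY}{y_0}$ and $a_i \in \R$. Since a tangent vector at $y_0$ acts on $\smo$ functions as an $\R$-linear derivation, applying both sides to $f$ gives
\[
Xf = \pr{X}{\sX}{\sY}(f) + \sum_{i=1}^{d} a_i\, X_i f.
\]
The hypothesis $X_1 f = \cdots = X_d f = 0$ now kills the second summand, leaving $Xf = \pr{X}{\sX}{\sY}(f)$.

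The remaining step is to identify $\pr{X}{\sX}{\sY}(f)$ with $\pr{X}{\sX}{\sY}(\res{f}{\sY})$. This is the standard fact that a tangent vector $v$ at $y_0$ lying in $\tg{\sY}{y_0} \subseteq \tg{\sX}{y_0}$ depends only on the germ of $f$ along $\sY$: choosing any smooth curve $\gamma:(-\varepsilon,\varepsilon) \to \sY$ with $\gamma(0) = y_0$ and $\gamma'(0) = v$, one has $v(f) = \frac{d}{dt}\big|_{t=0} f(\gamma(t)) = \frac{d}{dt}\big|_{t=0} \res{f}{\sY}(\gamma(t)) = v(\res{f}{\sY})$. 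Applied to $v = \pr{X}{\sX}{\sY}$, this yields the desired equality $Xf = \pr{X}{\sX}{\sY}(\res{f}{\sY})$.

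There is no real obstacle here; the whole content of the lemma is bookkeeping around the direct sum decomposition in Definition \ref{def-trans}. The only thing to be careful about is that $X_i$ are specified as tangent vectors at $y_0$ (not vector fields on $\sX$), so $X_i f$ is unambiguously the directional derivative of $f$ at $y_0$, and likewise $Xf$ and $\pr{X}{\sX}{\sY}(\res{f}{\sY})$ are scalars rather than functions; the identity is pointwise at $y_0$. This lemma will later let us replace a derivative computation on the ambient manifold $\sX$ by one on the submanifold $\sY$ whenever $f$ has critical directions in the transverse complement, which is exactly the reduction mechanism advertised in the paragraph preceding Definition \ref{def-trans}.
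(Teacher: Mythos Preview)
Your proof is correct. The paper itself states this lemma without proof, treating it as immediate from Definitions \ref{def-trans} and \ref{def-proj}; your argument supplies exactly the elementary unpacking one would expect.
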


The statements from Definition \ref{def-trans} to Lemma \ref{lem-id} are made in the pointwise manner. However, it is clear that we could employ them in the `local' manner. Let $v_1,\dots,v_d$ be vector fields on $\sX$ or  defined in a neighborhood of $y_0$ in $\sX$. We say that the collection of these vector fields is transverse to $\sY$ at $y_0$ if the collection of their values at $y_0$, $\{v_1(y_0),\dots,v_d(y_0)\}$ is transverse to $\sY$ at $y_0$. Then it is clear that there exists a neighborhood $U$ of $y_0$ in $\sY$ such that this transversality holds for all $y\in U$. Let $v$ be a vector field on $\sX$. Applying Definition \ref{def-proj} to $v(y)$ for all $y\in U$,  we get a map $U\to \tg{U}{}: y\to \pr{v(y)}{\sX}{\sY}$. It is easy to see that this map is $\smo$ and could be identified with a vector field on the submanifold $U$. We denote it by $\pr{v}{\sX}{\sY}$. Since we are concerned with local properties, it is harmless to replace $\sY$ with $U$ to restrict the domain. As our argument involves a lot of such manipulations, we would seldom specify the neighborhood $U$ and still use $\sY$ to denote it for saving notations. The readers shall be aware of our changes of domains.

\begin{remark}
Let us give an explanatory remark regarding our definition of the transversality.  One well-known definition of transversality is defined for two submanifolds cf. Hirsch \cite{Hi}. Let $\sY,\sZ$ be two submanifolds of $\sX$. For $y_0\in \sY\cap\sZ$, $\sY$ and $\sZ$ intersect transversally at $y_0$ if $\tg{\sX}{y_0}=\tg{\sY}{y_0}+\tg{\sZ}{y_0}$, here the sum need not be direct. In this article, we do not study  the intersection of submanifolds. The following is the typical scenario we face. We come up with a submanifold $\sY$ as a level set and have some vector fields on hand. We try to obtain a local fibration at $y_0\in \sY$, $\kappa:\sX\to \sY\times\R^d$. This local fibration yields a family of submanifolds $\sZ_y:=\kappa^{-1}(\{y\}\times \R^d)$'s which are parametrized smoothly by $y\in \sY$. We have $\tg{\sX}{y}=\tg{\sY}{y}+\tg{\sZ_y}{y}$, here the sum is direct.
\end{remark}

Let $\sX'$ be another manifold. Let us consider the Cartesian product $\sX\times \sX'$. We would like to view it as a fiber bundle with the base space $\sX$ and the fiber $\sX'$. Basically, we want $\sX$ to be the space of parameters, $\sX'$ the space within which we take the manipulations like the integration. The tangent bundle $\tg{\sX\times \sX'}{}$ is canonically isomorphic to the Cartesian product $\tg{\sX}{}\times \tg{\sX'}{}$. Then the vertical bundle is defined to be the subbundle $\sX\times \tg{\sX'}{}$. A tangent vector is called vertical if it belongs to the vertical bundle. Let $\sY$ be a submanifold of $\sX\times\sX'$. A section of the tangent bundle $\tg{\sX\times \sX'}{}$ over $\sY$ is vertical if it is a section of the vertical bundle $\sX\times \tg{\sX'}{}$. Furthermore, if this vertical section $v$ is a vector field on $\sY$, then its integral curves are along the fibers. One kind of vertical vector fields comes from the canonical extension of  a vector field on $\sX'$ to $\sX\times \sX'$. Let $v$ be a vector field on $\sX'$. By a canonical extension of $v$, we mean a vector field $\tilde{v}$ on $\sX\times\sX'$ such that for $(x,x')\in \sX\times \sX'$,
	\[\tilde{v}(x,x'):=v(x')\in \tg{\sX'}{x'}=\{x\}\times \tg{\sX'}{x'}\subseteq \tg{\sX\times\sX'}{(x,x')}.\] Finally, we name a simple fact. Let $X\in \{x\}\times \tg{\sX'}{x'}$, let $f_1\in \smo(\sX,\C)$ and let $f_2\in \smo(\sX\times \sX',\C)$. Then we have $X(f_1\cdot f_2)=f_1(x)\cdot Xf_2$, namely, we could pull the factor $f_1$ outside the differentiation.
	
\section{Properties of the function $f_{H, H'}$}

For $Y,Y'\in\fg$, we consider the function
	\[f_{Y, Y'}:\rK\to \R,\quad k\to B(Y,\Ad(k)Y').\]
On most occasions, we consider $Y,Y'\in \fa$ or $Y,Y'\in\fs$.

In this section, we give a brief account of properties of $f_{Y,Y'}$. Most of them could be found in \cite[Section 1]{DKV83}. Here, they are included to make the article more self-contained and adapted for our needs. Results in \cite{DKV83} are stated for general semisimple Lie groups and we only need their validity for the specific case $\rG=\SL(3,\R)$ and $\rK=\SO(3)$.

Due to the properties of the adjoint representation and the Killing form, we know that $f_{Y,Y'}$ is $\rK_Y$-left-invariant and $\rK_{Y'}$-right-invariant. Here $\rK_Y$, resp. $\rK_{Y'}$ is the centralizer of $Y$, resp. $Y'$ in $\rK$, exactly the stabilizer subgroup of $\rK$ with respect to $Y$, resp. $Y'$.

For $k_0\in \rK$, we identify $\tg{\rK}{k_0}$ with $\fk=\tg{\rK}{e}$ by the left translation. Then for $X\in \tg{\rK}{k_0}=\fk$, we have
	\[Xf_{Y,Y'}=B(Y,\Ad(k_0)[X,Y'])=f_{Y,[X,Y']}(k_0).\]
	
For $X\in\fk$, let $\inv{X}{L}$ be the left invariant vector field on $\rK$ associated to $X$ and let $\inv{X}{R}$ be the right invariant vector field associated to $X$. Then we have that for $k\in\rK$,
	\begin{equation}\label{eq:3-1}
		[\inv{X}{L}f_{Y,Y'}](k)=f_{Y,[X,Y']}(k),\quad [\inv{X}{R}f_{Y,Y'}](k)=f_{[Y,X],Y'}(k).
	\end{equation}
	Here we use the fact that the Killing form is invariant under the adjoint action. Equation \eqref{eq:3-1} tells that if we differentiate $f_{Y,Y'}$ with respect to a few left or right invariant vector fields, it amounts to a recursion of calculating Lie brackets.
	
	Now we focus on the situation $Y=H\in\fa$ and $Y'=H'\in\fa$. For $X\in\fk$, we write 
		\begin{equation}\label{eq:rootdecom}
			X=\sum_{\alpha\in \Rr^+} X_{\alpha}+\theta X_{\alpha},\quad X_\alpha\in\fg_\alpha.
		\end{equation}
For the sake of convenience, we put $\fk_\alpha=\fk\cap(\fg_\alpha+\fg_{-\alpha})$,  $\fs_\alpha=\fs\cap(\fg_{\alpha}+\fg_{-\alpha})$ for $\alpha\in \Rr$. We have  $X_\alpha+\theta X_\alpha\in \fk_\alpha$ and $X_\alpha-\theta X_\alpha\in \fs_\alpha$. 

With the decompostion \eqref{eq:rootdecom} of $X$, Equation \eqref{eq:3-1} yields that for $k\in\rK$,
	\begin{equation}
		[\inv{X}{L}f_{H,H'}](k)=\sum_{\alpha\in\Rr^+} -\alpha(H')f_{H,X_\alpha-\theta X_\alpha}(k),
	\end{equation}
		\begin{equation}
		[\inv{X}{R}f_{H,H'}](k)=\sum_{\alpha\in\Rr^+} \alpha(H)f_{X_\alpha-\theta X_\alpha,H'}(k).
	\end{equation}
Thus if we take $X\in\fk_\alpha$ for some $\alpha\in \Rr^+$, we could extract a factor $\alpha(H')$, resp. $\alpha(H)$ from differentiating $f_{H,H'}:\rK\to \R$ with respect to $\inv{X}{L}$, resp. $\inv{X}{R}$. We point out that this plain fact is going to play a significant role in our investigation of the stability of the singularities. 

\begin{remark}
Generally speaking,  when $H$ turns singular, that is, $\alpha(H)$ goes to 0 for some $\alpha\in \Rr^+$, we could use the right invariant vector field $\inv{X}{R}$ for $X\in \fk_{\alpha}$ to resolve the singularity coming from $H$. Accordingly, when $H'$ turns singular, we could use the left invariant vector field to resolve singularity coming from $H'$. This strategy is very helpful in the study of the singularities and their stability. However, it is not sufficient to handle all the cases for the following reason. Our problem concerns the singularities when $H$ and $H'$ both vary. Using left or right invariant vector field could only resolve the singularity from one side. In order to deal with the singularities arising from the both sides simultaneously, we shall incorporate facts in Lie groups and Lie algebras. Some of these facts are easily observed in the particular case $\rG=\SL(3,\R)$. 
\end{remark}

To apply the method of stationary phase to the estimate of the oscillatory integral \eqref{eq:ost2}, we shall locate the critical set of $f_{H,H'}$. 

\begin{lemma}[\cite{DKV83} Proposition 1.2]
The critical set of $f_{H,H'}$ is equal to 
	\[\bigcup_{s\in \rW} \rK_H s\rK_{H'},\]
here $\rK_H s\rK_{H'}$ is the double coset $\{k_1k_sk_2|k_1\in \rK_H,k_s\in s,k_2\in \rK_{H'}\}$.

\end{lemma}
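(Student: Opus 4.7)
The plan is to translate the vanishing of the differential of $f_{H,H'}$ at $k_0$ into a Lie-algebraic commutator condition on $Y := \Ad(k_0^{-1})H$, and then use the standard Ad-theory of the symmetric pair $(\fg,\fk)$ to recognise the locus of such $Y$ as exactly the union of Weyl translates required in the statement.

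First I would use the derivative formula recalled right before the lemma: with $\tg{\rK}{k_0}$ identified with $\fk$ via left translation,
\[
X f_{H,H'}(k_0)=B(H,\Ad(k_0)[X,H'])=B(Y,[X,H'])=-B(X,[Y,H']),
\]
where the last step is Ad-invariance of $B$. Since $Y\in\fp$ and $H'\in\fa\subset\fp$, the bracket $[Y,H']$ lies in $[\fp,\fp]\subset\fk$; and $B$ is negative definite on $\fk$. Hence $k_0$ is critical iff
\[
[\Ad(k_0^{-1})H,\ H']=0.
\]
The converse direction of the lemma is then immediate: if $k_0=k_1k_sk_2$ with $k_1\in\rK_H$, $k_s\in s$, $k_2\in\rK_{H'}$, then $\Ad(k_0^{-1})H=\Ad(k_2^{-1})(s^{-1}H)\in\fa$ (since $s^{-1}H\in\fa$ and $k_2$ centralises $H'$), and elements of $\fa$ automatically commute with $H'$.

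For the forward direction I would exploit the symmetric pair structure. Set $Y=\Ad(k_0^{-1})H\in\fp$ and let
\[
\fm:=\cent{\fp}{H'}=\fa+\sum_{\alpha\in\Rr^+,\,\alpha(H')=0}\fp_\alpha,
\]
the centraliser of $H'$ in $\fp$. The criticality gives $Y\in\fm$. Now $\fa$ is maximal abelian in $\fp$, hence a fortiori maximal abelian in $\fm$, and the compact group $\rK_{H'}$ acts on $\fm$ by Ad. By the standard KAK-type result for the reductive symmetric pair attached to $\cent{\fg}{H'}$, every Ad$(\rK_{H'})$-orbit in $\fm$ meets $\fa$: so there exists $k_2\in\rK_{H'}$ with $H'':=\Ad(k_2^{-1})Y\in\fa$. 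Both $H$ and $H''=\Ad(k_2^{-1}k_0^{-1})H$ lie in $\fa$ and are $\Ad(\rK)$-conjugate, and the classical fact that two $\Ad(\rK)$-conjugate elements of $\fa$ are $\rW$-conjugate yields $s\in\rW$ with $H''=s^{-1}H$. Setting $k_1:=k_0k_2k_s^{-1}$, a direct computation gives $\Ad(k_1)H=H$, so $k_1\in\rK_H$ and $k_0=k_1k_sk_2^{-1}\in\rK_H\,s\,\rK_{H'}$, as desired.

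The step I expect to be the main obstacle is the invocation of the $\rK_{H'}$-transitivity on the intersection of $\fm$ with any $\Ad(\rK)$-orbit, since this requires recognising $(\cent{\fg}{H'},\cent{\fk}{H'})$ as a reductive symmetric pair with $\fa$ as a maximal split toral part. For $\rG=\SL(3,\R)$ this is easy to verify by hand (the possibilities for $\cent{\fg}{H'}$ are $\fg$, $\fa+\mathfrak{sl}(2,\R)$-type subalgebras, or $\fa$ itself), so the argument is entirely elementary; for general semisimple groups one simply quotes the corresponding structure theorem, which is how \cite{DKV} proceeds.
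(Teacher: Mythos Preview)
The paper does not supply its own proof of this lemma; it is stated with a citation to \cite[Proposition~1.2]{DKV} and then used. Your argument is the standard one and is essentially what one finds in \cite{DKV}: reduce criticality to the commutator condition $[\Ad(k_0^{-1})H,H']=0$ via the nondegeneracy of $B$ on $\fk$, and then use the symmetric-pair structure of $\cent{\fg}{H'}$ to move $\Ad(k_0^{-1})H$ into $\fa$ by an element of $\rK_{H'}$.

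One small slip in the easy direction: from $k_0=k_1k_sk_2$ you write $\Ad(k_0^{-1})H=\Ad(k_2^{-1})(s^{-1}H)\in\fa$, but $k_2\in\rK_{H'}$ need not preserve $\fa$, so this element is generally only in $\fp$, not in $\fa$. The conclusion you actually need is unaffected: since $\Ad(k_2)H'=H'$ and $s^{-1}H,H'\in\fa$,
\[
[\Ad(k_2^{-1})(s^{-1}H),\,H']=\Ad(k_2^{-1})\bigl[s^{-1}H,\,\Ad(k_2)H'\bigr]=\Ad(k_2^{-1})[s^{-1}H,H']=0.
\]
With that correction the argument is complete. Your remark that for $\SL(3,\R)$ the structure of $\cent{\fg}{H'}$ can be checked by hand is apt and in the spirit of the paper, which repeatedly specialises the general \cite{DKV} theory to this concrete case.
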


It is clear that $\rM\subseteq \rK_H$ for any $H\in \fa$. In fact, \cite[Proposition 1.2]{DKV83} tells that
	\[\rK_{H}=\rM\rK_H^0=\rK_{H}^0\rM,\]
here $\rK_{H}^0$ is the connected component that containing the identity element of $\rK_H$. For $s\in \rW, H\in\fa$, we have $\rK_Hs=s\rK_{s^{-1}H}$. 

Fix $s\in\rW$, $\rK_Hs\rK_{H'}$ viewed as a topological subspace of $\rK$ may not be connected. Neverthelss, its connected components could be identified with closed submanifolds of $\rK$ and are diffeomorphic to each other via either left or right translations. Thus it makes sense to view $\rK_{H}s\rK_{H'}$ as a submanifold of $\rK$. As it consists of critical points of $f_{H,H'}$, we call it a critical manifold of $f_{H,H'}$. 

Let $s\in\rW$ vary. \cite[Proposition 1.3]{DKV83}  calculates the dimension of the critical manifold $\rK_{H}s\rK_{H'}$ which depends on $H,H'$ and $s$. It tells that the critical set of $f_{H,H'}$ is a union of critical manifolds which may have different dimensions. \cite[Proposition 1.4]{DKV83}   computes the Hessian of $f_{H,H'}$ at the critical points and \cite[Corollary 1.5]{DKV83} tells that $f_{H,H'}$ is a Morse-Bott function on $\rK$.

In order to establish uniform estimate for the oscillatory integral \eqref{eq:ost2}, we shall study the deformation of $f_{H,H'}$ when $H,H'$ vary. As explained in \cite[Remark 1.6]{DKV83}, the dependence of the critical set of $f_{H,H'}$ on the parameters $H,H'$ has a highly nongeneric rigidity. When $H,H'$ vary around the root hyperplanes, the dimension of the critical manifolds could change abruptly. This unstable feature in the qualitative aspect leads to the challenge in establishing a uniform and sharp estimate for the oscillatory integral in the quantitative aspect.

Now, we look closer at our specific case $\rG=\SL(3,\R)$. The centralizer of $\fa$ in $\rK$, $\rM$ and the normalizer of $\fa$ in $\rK$, $\rM'$ are discrete subgroups of $\rK$. If $H,H'$ are both regular, then the critical set of $f_{H,H'}$ is equal to $\rM'$ and $f_{H,H'}$ is a Morse function on $\rK$. For $H\in \fa$ singular and not 0, $\rK_H$ is a Lie subgroup of $\rK$ with dimension 1. It is obvious that $\rK_H^0$ is abelian. For $H,H'\in\fa$ not 0, $s\in \rW$, $\rK_H s\rK_{H'}$ as a critical manifold of $f_{H,H'}$  has dimension at most 2.

\section{Local normal forms}
Fix $H,H'\in\fa$. Morse lemma or Morse-Bott lemma gives the local normal forms of $f_{H,H'}$ at the critical points. Namely, under a suitable coordiniate system, $f_{H,H'}$ is expressed as a quadratic form plus a constant.

Our problem concerns the deformation of $f_{H,H'}$ as the parameters $H,H'$ vary. We seek a family of local coordinate systems depending on parameters $H,H'$ smoothly such that $f_{H,H'}$ is uniformized into a normal form. 

As $\fa\times \fa$ has dimension 4, we need $4$ real numbers to parametrize it. Due to the linearity of the Killing form and other considerations, we make the following setting.

Let $H_1,H'_1\in \fa$ with $\len{H_1}=\len{H'_1}=1$. Take $H_2,H'_2\in\fa$ such that $\{H_1,H_2\}$ and $\{H'_1,H'_2\}$ are orthonormal bases of $\fa$. Consider the maps 
	\[\R^2\to \fa,\quad (b_1, b_2)\to b_1(H_1+b_2H_2),\]
	\[\R^2\to \fa,\quad (c_1, c_2)\to c_1(H'_1+c_2H'_2).\]
They are not surjective since $\R H_2$ or $\R H'_2$ are not included in the range. If the domain is restricted to $\R\times (-\varepsilon,\varepsilon)$ for $\varepsilon>0$, its image is a conic subset (a sector) in $\fa$. With different orthonormal bases, we have different conic subsets of $\fa$ which cover the entire $\fa$.

Put $S_1=\R\times \R\times \rK$ which is viewed as a fiber bundle with the base space $\R\times\R$ and fiber $\rK$. We put $f_1:S_1\to \R$,
\[f_1(b_2, c_2; k)=B(H_1+b_2H_2,\Ad k(H'_1+c_2H'_2)).\]

Our task is to get local normal forms of $f_1:S_1\to \R$ at $(0,0;k_0)$ for any critical point $k_0$ of $f_{H_1,H'_1}$ and for all $H_1,H'_1\in \fa$ with $\len{H_1}=\len{H'_1}=1$. We first study the difficult case when $H_1,H'_1$ are both singular, then move to the simpler case when either $H_1$ or $H'_1$ is regular. The advantage of this order is that the preceding work overlaps the succeeding one.

We remark that concerning the stability of the singularity we deal with, Morse lemma with parameters does not apply since the nondegenercy condition does not hold. However, the readers should be able to find in our argument the spirit of the classical proof of Morse lemma by reduction. 

\subsection{$H_1$ and $H'_1$ are both singular}
Suppose that $H_1$ and $H'_1$ are both singular. If $k_0\in \rK$ is a critical point of $f_{H_1,H'_1}$, then $k_0$ is contained in a critical manifold $\rK_{H_1}s\rK_{H'_1}$ for some $s\in \rW$. The critical manifold $\rK_{H_1}s\rK_{H'_1}$ has dimension either 1 or 2 at $k_0$. In this section, we first discuss the situation when it has dimension 1, then discuss the situation when it has dimension 2.

The critical manifold $\rK_{H_1}s\rK_{H'_1}$has dimension 1, if and only if $\rK_{s^{-1}H_1}=\rK_{H'_1}$. It is also equivalent to $s^{-1}H_1=\pm H'_1$ as we have assumed that $\len{H_1}=\len{H'_1}=1$. We have $\rK_{H_1}s\rK_{H'_1}=\rK_{H_1}s=s\rK_{H'_1}$. We consider the partition 
	\[\rK_{H_1}s\rK_{H'_1}=(\rK_{H_1}s\rK_{H'_1}\cap \rM')\cup (\rK_{H_1}s\rK_{H'_1}\backslash \rM').\]
	We first work on the local normal forms of $f_1$ for $k_0\in \rK_{H_1}s\rK_{H'_1}\cap \rM'$, then for $k_0\in \rK_{H_1}s\rK_{H'_1}\backslash \rM'$.

\begin{theorem} \label{thm-n1}
Assume $k_0$ is contained in the critical manifold $\rK_{H_1}s\rK_{H'_1}$ which has dimension 1. We arrange the positive roots $\alpha_1,\alpha_2,\alpha_3$ in the way such that $\alpha_3$ is the unique positive root with $s\alpha_3(H_1)=\alpha_3(H'_1)=0$.

If $k_0\in \rK_{H_1}s\rK_{H'_1}\cap \rM'$, then there exists a local coordinate system at $(0,0;k_0)$, $\kappa:S_1\to \R^5$ which preserves the parameters, that is, 
\[\kappa(b_2,c_2;k)=(b_2,c_2;\cdot).\] 
$\kappa$ satisfies
	\[
		\begin{split}
			f_1\circ \kappa^{-1}(b_2, c_2; \cx_1, \cx_2, \cx_3)&=f_1(b_2, c_2; k_0)-\frac{1}{2}s\alpha_1(H)\alpha_1(H')\cx_1^2\\
			&-\frac{1}{2}s\alpha_2(H)\alpha_2(H')\cx_2^2-\frac{1}{2}\epsilon\cdot s\alpha_3(H)\alpha_3(H')\cx_3^2,
		\end{split}
	\]

	here $H=H_1+b_2H_2, H'=H'_1+c_2H'_2$ and $\epsilon$ is a constant $\pm 1$ which is determined explicitly. Besides, $\kappa$ always maps $(b_2,c_2;k_0)$ to $(b_2,c_2;0,0,0)$.
\end{theorem}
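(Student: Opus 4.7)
The plan is to compute the Taylor expansion of $f_1$ at $k_0$ in exponential coordinates and reduce it to the stated normal form by a parameter-dependent change of variables. Since $k_0\in\rM'$, a constant left translation on $\rK$ reduces the problem to $k_0=s$; this commutes with the parameter dependence and preserves the structure of the statement. Choosing unit vectors $X_i\in\fk_{\alpha_i}$ for $i=1,2,3$, I parametrize $\rK$ near $s$ by $k=s\exp(x_1X_1+x_2X_2+x_3X_3)$.

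Two structural facts follow from the derivative formulas in Section~3. First, $k=s$ remains a critical point of $f_1(b_2,c_2;\cdot)$ for \emph{every} parameter: the identity $[X_i,H']=-\alpha_i(H')(X_{\alpha_i}-\theta X_{\alpha_i})\in\fp_{\alpha_i}$ gives $\Ad(s)[X_i,H']\in\fp_{s\alpha_i}$, orthogonal to $\fa\ni H$, so $X_if_1(s)=0$. Second, the Hessian of $f_1$ at $s$ is diagonal in the basis $(X_1,X_2,X_3)$: for $i\neq j$ the vector $[X_i,[X_j,H']]$ decomposes into components in $\fp_{|\alpha_i\pm\alpha_j|}$ (with the convention that this space is zero if the combination is not a root), so after pairing with $s^{-1}H\in\fa$ the off-diagonal entries vanish; the diagonal entries evaluate to $-s\alpha_i(H)\alpha_i(H')$ up to a positive normalization constant that I absorb into the coordinate.

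Since the Hessian coefficients in the $x_1,x_2$ directions remain nonzero for $(b_2,c_2)$ near the origin, a parametric splitting (Morse) lemma provides a smooth family of coordinate changes $x_i\mapsto\cx_i$ ($i=1,2$) that reduces $f_1$ to
\[
f_1=f_1(b_2,c_2;s)-\tfrac12 s\alpha_1(H)\alpha_1(H')\cx_1^2-\tfrac12 s\alpha_2(H)\alpha_2(H')\cx_2^2+g(b_2,c_2,x_3),
\]
with $g$ smooth and $g(b_2,c_2,0)=\partial_{x_3}g(b_2,c_2,0)=0$. The main obstacle is putting $g$ into the desired quadratic form in $\cx_3$, since the Hessian coefficient $-s\alpha_3(H)\alpha_3(H')=-b_2c_2\,s\alpha_3(H_2)\alpha_3(H_2')$ vanishes at the origin of parameter space, where the classical Morse lemma does not apply.

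To overcome this, I would show that $g$ vanishes identically on both parameter axes, so that $g=b_2c_2\,\tilde g$ for some smooth $\tilde g$. When $c_2=0$, the identity $[X_3,H_1']=0$ gives $\Ad(\exp(x_3X_3))H_1'=H_1'$, so $f_1(b_2,0;s\exp(x_3X_3))=B(H,\Ad(s)H_1')$ is independent of $x_3$; when $b_2=0$, the relation $s^{-1}H_1=\pm H_1'$ combined with the fact that $\ad(X_3)$ preserves the two-dimensional plane $\vspan(H_2',X_{\alpha_3}-\theta X_{\alpha_3})$, to which $H_1'$ is orthogonal (since $\alpha_3(H_1')=0$ and $\fa\perp\fp_{\alpha_3}$), gives the same conclusion. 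A direct calculation using that $\Ad(\exp(\R X_3))$ acts as a rotation on this invariant plane identifies the leading $x_3^2$-coefficient of $\tilde g$ at $(0,0,0)$, and a smooth rescaling $\cx_3=x_3\,\psi(b_2,c_2,x_3)$ with $\psi(0,0,0)>0$ absorbs the higher-order tail, producing the stated normal form with $\epsilon\in\{\pm 1\}$ determined by the explicit sign comparison.
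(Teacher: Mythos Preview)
Your strategy is sound and reaches the same structural conclusion as the paper---the $b_2c_2$ factorisation in the degenerate direction---but the technical packaging is quite different, and there is one step that is not yet justified.

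\medskip

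\textbf{The gap.} After the parametric splitting lemma in $(x_1,x_2)$, the remainder $g(b_2,c_2,x_3)$ is by construction $f_1$ evaluated along the $(x_1,x_2)$--critical curve $\psi(b_2,c_2,x_3)$, not along the curve $x_1=x_2=0$. Your argument that $f_1(b_2,0;s\exp(x_3X_3))$ is constant in $x_3$ shows the latter, not the former. To close the gap you must check that $\psi(b_2,0,x_3)=(0,0)$ and $\psi(0,c_2,x_3)=(0,0)$. This is true: when $c_2=0$ the curve $s\exp(x_3X_3)$ lies in $s\rK_{H_1'}$, hence consists of critical points of $f_{H,H_1'}$, so in particular of $(x_1,x_2)$--critical points, and uniqueness (nondegenerate $(x_1,x_2)$--Hessian) forces $\psi=0$; when $b_2=0$ the same curve lies in $\rK_{H_1}s$ by $s^{-1}H_1=\pm H_1'$. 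Once this is said, your Hadamard factorisation $g=b_2c_2\,\tilde g$ and the rescaling in $\cx_3$ go through. A smaller point: your reduction ``$k_0=s$'' via left translation by $k_1\in\rM'_{H_1}$ replaces $H_2$ by $\Ad(k_1^{-1})H_2=\epsilon H_2$, so the sign $\epsilon$ enters here rather than at the end; you should track it.

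\medskip

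\textbf{Comparison with the paper.} The paper does not invoke a black-box splitting lemma. It builds the three coordinates one at a time via flows of carefully chosen invariant vector fields, at each step cutting down to a submanifold $S_2\supset S_3\supset S_4$ defined as a level set of an explicitly factored derivative $v\!f=\mu\cdot g$. The crucial move for the third coordinate is a separate lemma showing that $S_3$ is exactly $\R^2\times \rK_{H_1}s\rK_{H_1'}$; on that submanifold the restricted vector field $v_{3,3}$ agrees with the original $v_{1,3}$, and the derivative factors as $\mu_{3,3}\,g_{3,3}$ with $\mu_{3,3}=-b_2c_2\,\alpha_3(H_2')$ carrying the full singularity while $g_{3,3}$ remains a submersion. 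Your route replaces this geometric identification by the analytic observation that $g$ vanishes on both parameter axes, which encodes the same Lie-theoretic input ($[X_3,H_1']=0$ and $s^{-1}H_1=\pm H_1'$) in a different way. Your argument is shorter for this particular case; the paper's vector-field machinery is heavier but is reused uniformly across all the normal forms in the section, including the dimension-two cases where a direct exponential-coordinate computation would be messier.
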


Under the local coordinate system $\kappa$, $f_1\circ \kappa^{-1}$ becomes a quadratic polynomial. The local normal form refers to that. In the formula, $s\alpha_1(H)$, $s\alpha_2(H)$, $\alpha_1(H')$, $\alpha_2(H')$ stay away from 0 so they could be replaced by $\pm1$ after a trivial modification of $\kappa$. They are included for the purpose of consistency.

\begin{proof}
According to our arrangement of the positive roots,  $s\alpha_1(H_1)$, $s\alpha_2(H_1)$, $\alpha_1(H'_1)$, $\alpha_2(H'_1)$ are all not 0. 

For $k_0\in \rK_{H_1}s\rK_{H'_1}\cap \rM'=\rK_{H_1}s\cap \rM'$, $k_0=k_1k_s$ with $k_1\in \rM'_{H_1},k_s\in s$. So $\Ad (k_0^{-1})H_1=s^{-1}H_1$. As $\Ad (k_1^{-1})H_1=H_1$, $\Ad (k_1^{-1})H_2=\pm H_2$. Let $\epsilon=\Ad (k_1^{-1})H_2/H_2$. Note $\R H_2=\R (s\alpha_3)^{\vee}$.

Let $\{X_1,X_2,X_3\}$ be an orthonormal basis of $\fk$ such that $X_i\in \fk_{\alpha_i}$ for $i=1,2,3$. We take the left invariant vector fields $\inv{X_1}{L},\inv{X_2}{L},\inv{X_3}{L}$ on $\rK$. Let $v_{1,1},v_{1,2},v_{1,3}$ be their canonical extensions to $S_1=\R\times \R\times \rK$.

We construct the first local coordinate $\cx_1:S_1\to \R$. We give a list of manipulations together with the facts supporting the manipulations.


(1a). Consider the function $v_{1,1}f_1:S_1\to \R$,
	\[\begin{split}
		&\quad[v_{1,1}f_1](b_2,c_2;k)\\
		&=B(H_1+b_2 H_2,\Ad k[X_1,H'_1+c_2H'_2])\\
						   &=-\alpha_1(H'_1+c_2H'_2)B(H_1+b_2H_2,\Ad k(X_{1,\alpha_1}-\theta X_{1,\alpha_1})),	
	\end{split}\]
	We see that $v_{1,1}f_1$ is a product of two functions on $S_1$. We put $\mu_{1,1},g_{1,1}:S_1\to \R$ by
		\begin{equation}\label{eq:1a1}
			\mu_{1,1}(b_2,c_2;k):=-\alpha_1(H'_1+c_2H'_2),
		\end{equation}
		\begin{equation}\label{eq:1a2}
		g_{1,1}(b_2,c_2;k):=B(H_1+b_2H_2,\Ad k(X_{1,\alpha_1}-\theta X_{1,\alpha_1})).
		\end{equation}
	Note that $\mu_{1,1}$ only depends on $c_2$ and $g_{1,1}$ does not depend on $c_2$. The point of factoring out terms like $\mu_{1,1}$ is that if $b_2,c_2$ are fixed, these terms are  treated as constants, but when $b_2,c_2$ vary, they may carry the singularity of $b_2,c_2$. Here $\mu_{1,1}$ does not carry the singularity for $\alpha_1(H'_1)\neq 0$.
\medskip 

(1b). We show that the function $g_{1,1}:S_1\to \R$ vanishes at $(b_2,c_2;k_0)\in S_1$ and $g_{1,1}$ is a submersion at $(0,0;k_0)$. 

Evaluate $g_{1,1}$ at $(b_2,c_2;k_0)$,
	\[g_{1,1}(b_2,c_2;k_0)=B(\Ad k_0^{-1}(H_1+b_2H_2),X_{1,\alpha_1}-\theta X_{1,\alpha_1}).\]
$k_0\in \rM'$ so $\Ad k_0^{-1}(H_1+b_2H_2)\in \fa$. As $X_{1,\alpha_1}-\theta X_{1,\alpha_1}\in \fq$, $g_{1,1}(b_2,c_2;k_0)$ is 0.

To show that $g_{1,1}$ is a submersion at $(0,0;k_0)$, we show $[v_{1,1}g_{1,1}](0,0;k_0)\neq 0$. 
	\[\begin{split}
		[v_{1,1}g_{1,1}](0,0;k_0)&=B(H_1,\Ad k_0[X_1,X_{1,\alpha_1}-\theta X_{1,\alpha_1}])\\
						     &=B([\Ad k_0^{-1}H_1,X_1],X_{1,\alpha_1}-\theta X_{1,\alpha_1})\\
						     &=s\alpha_1(H_1)\neq 0.
	\end{split}\]
\medskip

(1c).  As $g_{1,1}$ is a submersion at $(0,0;k_0)$, the implicit function theorem tells that the level set $g_{1,1}^{-1}(\{0\})$ is locally a submanifold of $S_1$ through $(0,0;k_0)$ and we denote it by $S_2$. $S_2$ has codimension 1 in $S_1$. According to the fact that $g_{1,1}(b_2,c_2;k_0)=0$ in (1b), the subset $\{(b_2,c_2,k_0)\in S_2\}$ could be identified with a 2-dimension submanifold of $S_2$. We also know that $v_{1,1}$ is transverse to $S_2$ at $(0,0;k_0)$ since $[v_{1,1}g_{1,1}](0,0;k_0)\neq 0$. Hence we can assume that $v_{1,1}$ is transverse to $S_2$ everywhere by replacing $S_2$ with a sufficiently small (open) neighborhood of $(0,0;k_0)$ in $S_2$.

\medskip

(1d). The flow generated by $v_{1,1}$ which is a map $S_1\times \R\to S_1$ induces a local fibration at $(0,0;k_0)$, 
\[\kappa_1:S_1\to S_2\times \R, \quad s_1\to (s_2,t_1).\] 
$\kappa_1$ satisfies $d\kappa_1(v_{1,1})=\partial/\partial t_1$ and $\kappa_1(s_2)=(s_2,0)$ for $s_2\in S_2$. Let $\pi_1:S_1\to S_2$ be the composition of $\kappa_1$ and the projection $S_2\times \R\to S_2$. It is obvious that $d\pi_1(v_{1,1})=0$. We point out that the local fibration $\kappa_1$ preserves the parameters, that is, if $\kappa_1(s_1)=(s_2,t_1)$ and $s_1=(b_2,c_2;k)$, then $s_2=(b_2,c_2;\cdot)$. It is true because the vector field $v_{1,1}$ is vertical so the flow is along the fiber $\rK$.

We consider the function $(f_1-f_1\circ \pi_1)\circ \kappa_1^{-1}:S_2\times \R\to \R$. We show it vanishes along $S_2\times \{0\}$ up to the first order, that is, for $(s_2,0)\in S_2\times \R$,
	\[[(f_1-f_1\circ \pi_1)\circ \kappa_1^{-1}](s_2,0)=0,\]
	\[\partial_{t_1}[(f_1-f_1\circ \pi_1)\circ \kappa_1^{-1}](s_2,0)=0.\]
It vanishes along $S_2\times\{0\}$ because for $s_2\in S_2$, we have $\kappa_1^{-1}(s_2,0)=s_2$ and $\pi_1(s_2)=s_2$.

In order to show that the first order derivative in variable $t_1$ vanishes along $S_2\times \{0\}$, we rewrite
	\[\begin{split}
		&\quad\partial_{t_1}[(f_1-f_1\circ \pi_1)\circ \kappa_1^{-1}](s_2,t_1)\\
			&=\partial_{t_1}[f_1\circ \kappa_1^{-1}](s_2,t_1)\\
			&=[(v_{1,1}f_1)\circ \kappa_1^{-1}](s_2,t_1)\\
			&=[\mu_{1,1}\circ\kappa_1^{-1}](s_2,t_1)\cdot [g_{1,1}\circ\kappa_1^{-1}](s_2,t_1),
	\end{split}\]
where $(s_2,t_1)\in S_2\times \R.$ By the definition of $S_2$, we know the factor $g_{1,1}\circ \kappa_1^{-1}$ vanishes on $S_2\times \{0\}$, so does $\partial_{t_1}[(f_1-f_1\circ \pi_1)\circ \kappa_1^{-1}]$.
	
	As the fibration $\kappa_1$ preserves the parameters and $\mu_{1,1}$ only depends on $c_2$, we have $[\mu_{1,1}\circ\kappa_1^{-1}](s_2,t_1)=\mu_{1,1}(s_2)$. Thus we can replace $[\mu_{1,1}\circ\kappa_1^{-1}](s_2,t_1)$ by $\mu_{1,1}(s_2)$ in the expression of the first order derivative above, here $\mu_{1,1}(s_2)$ should be viewed as a function on $S_2\times \R$ but is independent of $t_1$.
	
	We also consider the second order derivative $\partial_{t_1}^2[(f_1-f_1\circ \pi_1)\circ \kappa_1^{-1}]$ and show $\partial_{t_1}^2[(f_1-f_1\circ \pi_1)\circ \kappa_1^{-1}]((0,0;k_0),0)$ not 0. Then we ensure that the second order derivative stays away from 0 everywhere by restricting to a sufficiently small (open) neighborhood of $((0,0;k_0),0)$ in $S_2\times \R$. We write
		\[\begin{split}
			&\quad\partial_{t_1}^2[(f_1-f_1\circ\pi_1)\circ \kappa_1^{-1}](s_2,t_1)\\
			&=\partial_{t_1}[\mu_{1,1}(s_2)\cdot g_{1,1}\circ \kappa_1^{-1}](s_2,t_1)\\
			&=\mu_{1,1}(s_2)\cdot [(v_{1,1}g_{1,1})\circ\kappa_1^{-1}](s_2,t_1),
		\end{split}\]
	which tells that it is a product of two functions. 
	
	Since $\mu_{1,1}(0,0;k_0)=-\alpha_1(H'_1)\neq 0$ and $[v_{1,1}g_{1,1}](0,0;k_0)=s\alpha_1(H_1)\neq 0$, the second order derivative at $((0,0;k_0),0)$ is not 0.
	
\medskip	
	
(1e). We apply the Taylor's formula to $(f_1-f_1\circ \pi_1)\circ \kappa_1^{-1}:S_2\times \R\to \R$ in the variable $t_1$,
		\[\begin{split}
			&\quad [(f_1-f_1\circ \pi_1)\circ\kappa_1^{-1}](s_2,t_1)\\
			&=\frac{1}{2}t_1^2\int_0^1 2(1-z)\partial_{t_1}^2[(f_1-f_1\circ \pi_1)\circ\kappa_1^{-1}](s_2,t_1z)dz\\
				&=\frac{1}{2}\mu_{1,1}(s_2)t_1^2\int_0^1 2(1-z)[(v_{1,1}g_{1,1})\circ\kappa_1^{-1}](s_2,t_1z)dz.
		\end{split}\]
		We insert $s\alpha_1(H_1+b_2H_2)$ into the right side of the equation
		\begin{eqnarray*}
		&&\frac{1}{2}\mu_{1,1}(s_2)s\alpha_1(H_1+b_2H_2)t_1^2\frac{1}{s\alpha_1(H_1+b_2H_2)}\\
		&&\quuad\quad\quad \cdot \int_0^1 2(1-z)[(v_{1,1}g_{1,1})\circ\kappa_1^{-1}](s_2,t_1z)dz,
		\end{eqnarray*}
		here 	$s\alpha_1(H_1+b_2H_2)$ should be viewed as a function on $S_2\times \R$ which only depends on the $b_2$ component of $s_2$. The division by $s\alpha_1(H_1+b_2H_2)$ makes sense because $s\alpha_1(H_1)\neq 0$ implies that $s\alpha_1(H_1+b_2H_2)$ stays away from 0 when $|b_2|$ is sufficiently small or we replace $S_2$ by a sufficiently small neighborhood of $(0,0;k_0)$ in $S_2$.
		
		For the purpose of uniformization, we introduce a local diffeomorphism at $((0,0;k_0),0)$, $S_2\times \R\to S_2\times \R, (s_2,t_1)\to (s_2,\ct_1)$, by setting
			\[\ct_1(s_2,t_1)=t_1\Big[\frac{1}{s\alpha_1(H_1+b_2H_2)}\int_0^1 2(1-z)[(v_{1,1}g_{1,1})\circ\kappa_1^{-1}](s_2,t_1z)dz\Big]^{1/2},\]
		here taking the square root makes sense because at $((0,0;k_0),0)\in S_2\times\R$, the term inside the square root is equal to
			\[\frac{1}{s\alpha_1(H_1)}\int_0^1 2(1-z)[v_{1,1}g_{1,1}](0,0;k_0)dz=\int_0^1 2(1-z)dz=1,\]
		which tells the existence of a small neighborhood where the term inside the square root remains positive.
	
		Let $\cka_1:S_1\to S_2\times \R, s_1\to (s_2,\ct_1)$ be the (local) composition of $\kappa_1$ and the map above. It is also a local fibration at $(0,0;k_0)$. We have that for $(b_2,c_2;k)\in S_1$,
			\[[f_1-f_1\circ\pi_1](b_2,c_2;k)=-\frac{1}{2}s\alpha_1(H_1+b_2H_2)\alpha_1(H'_1+c_2H'_2)\ct_1^2,\]
		which could be rewritten as
			\[f_1(b_2,c_2;k)=f_1\circ\pi_1(b_2,c_2;k)-\frac{1}{2}s\alpha_1(H_1+b_2H_2)\alpha_1(H'_1+c_2H'_2)\ct_1^2.\]
		The first local coordinate $\cx_1:S_1\to \R$ is the $\ct_1$ component.
		

		Next, we construct the second local coordinate $\cx_2:S_1\to \R$. We list the manipulations and facts as well.
		


(2a). We know that $v_{1,1}$ is transverse to $S_2$ everywhere in (1c). Thus we can take the projections of $v_{1,2},v_{1,3}$ to $S_2$ with respect to $v_{1,1}$ and we denote the projections by $v_{2,2},v_{2,3}$. They are vertical vector fields on $S_2$. Precisely, under the identification of $\tg{S_2}{}$ with a submanifold of $\tg{S_1}{}$, $v_{2,2}$ and $v_{2,3}$ are vertical pointwisely.
		
\medskip


(2b). Consider the function $v_{1,2}f_1:S_1\to \R$. As in (1a), it could be written as a product of two functions $\mu_{1,2},g_{1,2}:S_1\to \R$,
			\[[v_{1,2}f_1](b_2,c_2;k)=\mu_{1,2}(b_2,c_2;k)g_{1,2}(b_2,c_2;k),\]
			\[\mu_{1,2}(b_2,c_2;k):=-\alpha_2(H'_1+c_2H'_2),\]
			\[g_{1,2}(b_2,c_2;k):=B(H_1+b_2H_2,\Ad k(X_{2,\alpha_2}-\theta X_{2,\alpha_2})).\]

			We remind that $\alpha_2(H'_1)\neq 0$ according to our choice of $\alpha_2$. Let $f_2:S_2\to \R$ be the restriction of $f_1:S_1\to \R$ to $S_2$. Since $v_{1,1}f_1$ vanishes on $S_2$, Lemma \ref{lem-id} tells that 
				\[[v_{2,2}f_2](b_2,c_2;k)=[v_{1,2}f_1](b_2,c_2;k),\quad (b_2,c_2;k)\in S_2.\]
			Let $\mu_{2,2},g_{2,2}:S_2\to \R$ be the restrictions of $\mu_{1,2},g_{1,2}:S_1\to \R$ to $S_2$. The function $v_{2,2}f_2:S_2\to \R$ is the product of two functions $\mu_{2,2},g_{2,2}:S_2\to \R$.

\medskip
	
(2c). We show that the function $g_{2,2}:S_2\to \R$ vanishes at $(b_2,c_2;k_0)\in S_2$ and $g_{2,2}$ is a submersion at $(0,0;k_0)$. $g_{2,2}(b_2,c_2;k_0)$ is always 0 for the same reason in (1b). To prove the later, we shall be more careful. We show $[v_{2,2}g_{2,2}](0,0;k_0)$ is not 0 indirectly. We first show that  $[v_{1,1}g_{1,2}](0,0;k_0)$ is 0. We have
		\[[v_{1,1}g_{1,2}](0,0;k_0)=B([\Ad k_0^{-1}(H_1),X_1],X_{2,\alpha_2}-\theta X_{2,\alpha_2}).\]
	$\Ad k_0^{-1} (H_1)\in \fa$, so $[\Ad k_0^{-1}(H_1),X_1]\in \fs_{\alpha_1}$. While $X_{2,\alpha_2}-\theta X_{2,\alpha_2}\in \fs_{\alpha_2}$, so $[v_{1,1}g_{1,2}](0,0;k_0)$ is 0.
	
	Then we show $[v_{1,2}g_{1,2}]$ is not 0. We have
		\[\begin{split}[v_{1,2}g_{1,2}](0,0;k_0)&=B([\Ad k_0^{-1}(H_1),X_2],X_{2,\alpha_2}-\theta X_{2,\alpha_2})\\
			&=s\alpha_2(H_1)\neq 0.
		\end{split}\]
	Then Lemma \ref{lem-id} tells that $[v_{2,2}g_{2,2}](0,0;k_0)$ is equal to $[v_{1,2}g_{1,2}](0,0;k_0)$, so it is not 0.
	
\medskip

(2d). As $g_{2,2}:S_2\to \R$ is a submersion at $(0,0;k_0)$, the implicit function theorem tells that the level set $g_{2,2}^{-1}(\{0\})$ is locally a submanifold of $S_2$ through $(0,0;k_0)$ and let us denote it by $S_3$. $S_3$ has codimension 1 in $S_2$. (2c) tells that $g_{2,2}(b_2,c_2;k_0)$ is always 0, so the subset $\{(b_2,c_2;k_0)\in S_3\}$ could be identified with a submanifold of $S_3$. As $v_{2,2}$ is transverse to $S_3$ at $(0,0;k_0)$, we could ensure that the transversality holds everywhere on $S_3$ as in (1c).
	
\medskip

(2e). The flow generated by $v_{2,2}$ induces a local fibration at $(0,0;k_0)$, 
	\[\kappa_2:S_2\to S_3\times \R, \quad s_2\to (s_3,t_2).\]
 $\kappa_2$ satisfies $d\kappa_2(v_{2,2})=\partial/\partial t_2$ and $\kappa_2(s_3)=(s_3,0)$ for $s_3\in S_3$. The local fibration $\kappa_2$ preserves the parameters as well as in (1d). Let $\pi_2:S_2\to S_3$ be the composition of $\kappa_2$ and the projection $S_3\times \R\to S_3$. It is clear that $d\pi_2(v_{2,2})=0$.
	
	As in (1d), we also have that the function $(f_2-f_2\circ \pi_2)\circ \kappa_2^{-1}:S_3\times \R\to \R$ vanishes along $S_3\times \{0\}$ up to the first order. It is clear that $[(f_2-f_2\circ \pi_2)\circ \kappa_2^{-1}](s_3,0)=0$ for $(s_3,0)\in S_3\times \R$. In order to show that $\partial_{t_2}[(f_2-f_2\circ \pi_2)\circ \kappa_2^{-1}](s_3,0)=0$ for $(s_3,0)\in S_3\times \R$, we write
		\[\begin{split}
			&\quad\partial_{t_2}[(f_2-f_2\circ \pi_2)\circ \kappa_2^{-1}](s_3,t_2)\\
			&=\partial_{t_2}[f_2\circ \kappa_2^{-1}](s_3,t_2)\\
			&=[\mu_{2,2}\circ\kappa_2^{-1}](s_3,t_2)\cdot [g_{2,2}\circ\kappa_2^{-1}](s_3,t_2).
		\end{split}\]
	The first order derivative vanishes along $S_3\times \{0\}$ for the same reason in (1d). We can replace the function $[\mu_{2,2}\circ\kappa_2^{-1}](s_3,t_2)$ by the function $\mu_{2,2}(s_3)$ in the expression of the first order derivative as well as in (1d). We also get
		\[\partial_{t_2}^2[(f_2-f_2\circ \pi_2)\circ \kappa_2^{-1}](s_3,t_2)=\mu_{2,2}(s_3)\cdot [(v_{2,2}g_{2,2})\circ\kappa_2^{-1}](s_3,t_2).\]
	Furthermore, $[v_{2,2}g_{2,2}](0,0;k_0)$ is not 0 and $\mu_{2,2}(0,0;k_0)$ is not 0. Thus we can ensure that the second order derivative (in variable $t_2$) stays away from 0 in a neighborhood of $((0,0;k_0),0)$ in $S_3\times\R$.
	
\medskip	

(2f). We almost repeat the procedure in (1e). We have a local fibration at $(0,0;k_0)$, $\cka_2:S_2\to S_3\times \R, s_2\to (s_3,\ct_2)$ such that for $ (s_3,\ct_2)\in S_3\times \R$,
	\begin{eqnarray*}
		&&[(f_2-f_2\circ \pi_2)\circ \cka_2^{-1}](s_3,\ct_2)\\
		&&\quad\quad\quad\quad\quad\quad\quad\quad	=-\frac{1}{2}s\alpha_2(H_1+b_2H_2)\alpha_2(H'_1+c_2H'_2)\ct_2^2.
	\end{eqnarray*}
	To obtain the second local coordinate $\cx_2:S_1\to \R$, we take the (local) composition
		\[S_1\xrightarrow{\cka_1} S_2\times \R \xrightarrow{(\cka_2,\id)} S_3\times \R\times \R,\]
	which is also a local fibration at $(0,0;k_0)$, $S_1\to S_3\times \R^2$. Then $\cx_2:S_1\to \R$ will be the composition of $\pi_1:S_1\to S_2$ and $\ct_2:S_2\to \R$.
	
	The function $f_1:S_1\to \R$ could be written in sums
	\[\begin{split}
		f_1&=f_1\circ \pi_1+(f_1-f_1\circ \pi_1)\\
			&=f_2\circ \pi_1+(f_1-f_1\circ \pi_1)\\
			&=f_2\circ \pi_2\circ \pi_1+(f_2-f_2\circ\pi_2)\circ \pi_1+(f_1-f_1\circ \pi_1), 
	\end{split}\]
	here we used the fact that the function $f_1\circ \pi_1:S_1\to \R$ is the same as $f_2\circ\pi_1:S_1\to \R$. In view of the first and second local coordinates, we have
		\begin{eqnarray*}
		&&	f_1(b_2,c_2;k)=f_2\circ \pi_2\circ\pi_1(b_2,c_2;k)\\
		&&\quad\quad\quad\quad\quad\quad\quad\quad-\frac{1}{2}s\alpha_1(H)\alpha_1(H')\cx_1^2-\frac{1}{2}s\alpha_2(H)\alpha_2(H')\cx_2^2,
		\end{eqnarray*}
	here $H$ and $H'$ are put as in the statement of the theorem.
	
	The next task is to construct the third local coordinate $\cx_3:S_1\to \R$. Once we achieve that, we will almost be done. Before we list the manipulations and facts, we claim that $S_3$ is an open submanifold of $\R\times \R\times \rK_{H_1}s\rK_{H'_1}$. We observe that $S_3$ has dimension 3 which is the same as the dimension of $\R\times \R\times \rK_{H_1}s\rK_{H'_1}$. To prove this claim, it suffices to show that $g_{1,1}:S_1\to \R$ and $g_{2,2}:S_2\to \R$ both vanish on $\R\times \R\times \rK_{H_1}s\rK_{H'_1}$. We single out this fact as Lemma \ref{lem-v1} and present it after finishing the theorem.
	
	We list the manipulations and facts as well.

(3a). We showed that $v_{2,2}$ is transverse to $S_3$ everywhere in (2d). Thus we can take the projection of $v_{2,3}$ to $S_3$ with respect to $v_{2,2}$ and denote it by $v_{3,3}$. We want to show that $v_{3,3}$ coincides with the restriction of $v_{1,3}$ to $S_3$. First, $v_{1,3}$ is tangent to $S_3$ (or $\R\times \R\times \rK_{H_1}s\rK_{H'_1}$) since the left invariant vector field $\inv{X_3}{L}$ is tangent to $\rK_{H_1}s\rK_{H'_1}=s\rK_{H'_1}$. Thus the restriction of  $v_{2,3}$ (which is the projection of $v_{1,3}$) to $(\R\times \R \times \rK_{H_1}s\rK_{H'_1})\cap S_2$ coincides with the restriction of $v_{1,3}$ to $(\R\times \R \times \rK_{H_1}s\rK_{H'_1})\cap S_2$. Here, the restrictions make sense because  $(\R\times \R \times \rK_{H_1}s\rK_{H'_1})\cap S_2$ is identified with a submanifold of $S_2$. Again, $v_{3,3}$ which is the projection of $v_{2,3}$ coincides with the restriction of $v_{2,3}$ to $S_3$, thus it coincides with the restriction of $v_{1,3}$ to $S_3$.
	
	\medskip
	

(3b). Let $f_3:S_3\to \R$ be the restriction of the function $f_1:S_1\to \R$ (or $f_2:S_2\to \R$) to $S_3$. Consider the function $v_{3,3}f_3:S_3\to \R$. Here we can evaluate $v_{3,3}f_3$ directly without resorting to Lemma \ref{lem-id} and have
		\[[v_{3,3}f_3](b_2,c_2;k)=[v_{1,3}f_1](b_2,c_2;k),\quad (b_2,c_2;k)\in S_3\]
		because $v_{3,3}$ is the restriction of $v_{1,3}$ and $f_3:S_3\to \R$ is the restriction of $f_1:S_3\to \R$. Thus it is equal to
		\[-\alpha_3(H'_1+c_2H'_2)B(H_1+b_2H_2,\Ad k(X_{3,\alpha_3}-\theta X_{3,\alpha_3})).\]
		We look closer at the second factor,
		\[B(H_1,\Ad k(X_{3,\alpha_3}-\theta X_{3,\alpha_3}))+b_2 B(H_2,\Ad k(X_{3,\alpha_3}-\theta X_{3,\alpha_3})).\]
		We show that the first term in the sum is always 0 for $k\in \rK_{H_1}s$. $k\in \rK_{H_1}s$, so $\Ad k^{-1}H_1=s^{-1}H_1$. We have 
		\[B(H_1,\Ad k(X_{3,\alpha_3}-\theta X_{3,\alpha_3}))=B(s^{-1}H_1,X_{3,\alpha_3}-\theta X_{3,\alpha_3})=0.\]
		Thus the function $v_{3,3}f_3:S_3\to \R$ is a product of the following two functions
		\begin{equation} \label{eq:3b1}
		\mu_{3,3}(b_2,c_2;k):=-\alpha_3(H'_1+c_2H'_2)b_2=-b_2c_2\alpha_3(H'_2),
		\end{equation}
		\begin{equation} \label{eq:3b2}
			g_{3,3}(b_2,c_2;k):=B(H_2,\Ad k(X_{3,\alpha_3}-\theta X_{3,\alpha_3})).
		\end{equation}
		We point out that $\mu_{3,3}:S_3\to \R$ only depends on $b_2,c_2$ while $g_{3,3}:S_3\to \R$ only depends on $k$. Furthermore, $\mu_{3,3}:S_3\to \R$ carries the singularity $b_2\cdot c_2$.
		
		\medskip


(3c). We show that $g_{3,3}:S_3\to \R$ vanishes at $(b_2,c_2;k_0)\in S_3$ and $g_{3,3}$ is a submersion at $(0,0;k_0)$. According to the expression of $g_{3,3}$, it is clear that $g_{3,3}$ vanishes at $(b_2,c_2;k_0)$. Then we show $[v_{3,3}g_{3,3}](0,0;k_0)$ is not 0. Because we have the exact expressions of $g_{3,3}$ and $v_{3,3}$, we get 
		\[\begin{split}
			[v_{3,3}g_{3,3}](0,0;k_0)&=B([\Ad (k_0^{-1}) H_2,X_3],X_{3,\alpha_3}-\theta X_{3,\alpha_3})\\
			&=\epsilon \cdot s\alpha_3(H_2).
		\end{split}\]
		Since $H_2$ is regular, $s\alpha_3(H_2)$ is not 0.
		
			\medskip
		

(3d). As $g_{3,3}$ is a submersion at $(0,0;k_0)$, the level set $g_{3,3}^{-1}(\{0\})$ is locally a submanifold of $S_3$ and we denote it by $S_4$. In fact, $S_4$ is an open submanifold of $\R\times\R\times \{k_0\}$ because $g_{3,3}$ vanishes at $(\R\times\R\times \{k_0\})\cap S_3$ and $\R\times\R\times \{k_0\}$ has the same dimension 2 as $S_4$. Furthermore, it is obvious that $v_{3,3}$ is transverse to $S_4$ everywhere.
		
			\medskip
		

(3e). The flow generated by $v_{3,3}$ induces a local fibration at $(0,0;k_0)$, 
		\[\kappa_3:S_3\to S_4\times \R,\quad s_3\to (s_4,t_3).\] 
		Let $\pi_3:S_3\to S_4$ be the composition of $\kappa_3$ and the projection $S_4\times \R\to S_4$. Indeed, $\pi_3$ simply sends $(b_2,c_2;k)\in S_3$ to $(b_2,c_2;k_0)\in S_4$ due to exact expressions of $S_3$ and $S_4$. It is immediate to see that $(f_3-f_3\circ \pi_3)\circ \kappa_3^{-1}:S_4\times \R\to \R$ vanishes along $S_4\times \{0\}$ up to the first order, that is, for $(s_4,0)\in S_4\times \R$, 
		\[[(f_3-f_3\circ \pi_3)\circ \kappa_3^{-1}](s_4,0)=0,\] 
		\[\partial_{t_3}[(f_3-f_3\circ \pi_3)\circ \kappa_3^{-1}](s_4,0)=0 .\]
		Furthermore, we have
			\[\begin{split}
				&\quad\partial_{t_3}[(f_3-f_3\circ \pi_3)\circ \kappa_3^{-1}](s_4,t_3)\\
				&=[\mu_{3,3}\circ\kappa_3^{-1}](s_4,t_3)\cdot [g_{3,3}\circ\kappa_3^{-1}](s_4,t_3)\\
				&=\mu_{3,3}(s_4)\cdot [g_{3,3}\circ\kappa_3^{-1}](s_4,t_3).
			\end{split}\]
		Let us emphasize the significance of factoring out the term $[\mu_{3,3}\circ\kappa_3^{-1}](s_4,t_3)$ and replacing it with $\mu_{3,3}(s_4)$. They help us resolve the singularity of $b_2c_2$. 
		
		We look at the second order derivative in variable $t_3$,
		\begin{eqnarray*}
		&&\partial_{t_3}^2\left [(f_3-f_3\circ \pi_3)\circ \kappa_3^{-1}\right](s_4,t_3)\\
		&&\quad\quad\quad\quad\quad\quad\quad\quad=\mu_{3,3}(s_4)[(v_{3,3}g_{3,3})\circ \kappa_3^{-1}](s_4,t_3).
		\end{eqnarray*}
		We know that $[v_{3,3}g_{3,3}](b_2,c_2;k_0)$ is never 0. 
		
			\medskip
		

(3f). We apply the Taylor's formula to $(f_3-f_3\circ\pi_4)\circ \kappa_3^{-1}:S_4\times \R\to \R$ in the $t_3$ variable and get
		\begin{eqnarray*}
		&&[(f_3-f_3\circ\pi_3)\circ \kappa_3^{-1}](s_4,t_3) \\
		&&\quad \quad\quad\quad\quad\quad\quad\quad=\mu_{3,3}(s_4)\frac{t_3^2}{2}\int_0^1 2(1-z)[(v_{3,3}g_{3,3})\circ \kappa_3^{-1}](s_4,t_3z)dz.
		\end{eqnarray*}
		We point out that $\mu_{3,3}(s_4)=-b_2c_2\alpha_3(H'_2)$ with $\alpha_3(H'_2)\neq 0$. It is crucial to have both the factors $b_2,c_2$ outside the integral which permits us to later take the square root without involving singularities.
		
		We insert a constant $\epsilon\cdot s\alpha_3(H_2)\neq 0$ into the equation. The right side of the equation becomes
		\begin{eqnarray*}
		&&-\frac{1}{2}\epsilon\cdot b_2s\alpha_3(H_2)c_2\alpha_3(H'_2)\frac{t_3^2}{\epsilon\cdot s\alpha_3(H_2)}\\
		&& \quad \quad\quad\quad\quad\quad\quad\quad \cdot \int_0^1 2(1-z)[(v_{3,3}g_{3,3})\circ \kappa_3^{-1}](s_4,t_3z)dz.
		\end{eqnarray*}
		Beware that $b_2s\alpha_3(H_2)c_2\alpha_3(H'_2)=s\alpha_3(H_1+b_2H_2)\alpha_3(H'_1+c_2H'_2)$. Put
			\[\ct_3(s_4,t_3)=t_3\left[\frac{1}{\epsilon\cdot s\alpha_3(H_2)}\int_0^1 2(1-z)[(v_{3,3}g_{3,3})\circ \kappa_3^{-1}](s_4,t_3z)dz\right]^{1/2}.\]
		Then we get a local fibration at $((0,0;k_0),0)$, $\cka_3:S_3\to S_4\times \R, s_3\to (s_4,\ct_3)$ such that for $(s_4,t_3)\in S_4\times \R$,
		\[[(f_3-f_3\circ \pi_3)\circ \cka_3^{-1}](s_4,\ct_3)=-\frac{1}{2}\epsilon\cdot s\alpha_3(H)\alpha_3(H')\ct_3^2.\]
	To get the third local coordinate $\cx_3:S_1\to \R$ we shall take the (local) composition
	\[S_1\xrightarrow{\cka_1} S_2\times \R \xrightarrow{(\cka_2,\id)} S_3\times \R^2\xrightarrow{(\cka_3,\id^2)} S_4\times \R^3.\]
	
	With local coordinates $\cx_1,\cx_2,\cx_3:S_1\to \R$, we have that for $(b_2,c_2;k)\in S_1$,

	\[
		\begin{split}
			f_1(b_2,c_2;k)&=f_3\circ \pi_3\circ\pi_2\circ\pi_1(b_2,c_2;k)-\frac{1}{2}s\alpha_1(H)\alpha_1(H')\cx_1^2\\
			&-\frac{1}{2}s\alpha_2(H)\alpha_2(H')\cx_2^2-\frac{1}{2}\epsilon\cdot s\alpha_3(H)\alpha_3(H')\cx_3^2.
		\end{split}
	\]
	Note that $\pi_3\circ \pi_2\circ \pi_1(b_2,c_2;k)=(b_2,c_2,k_0)$. So the expression above is exactly the equation in the statement.
	
	Finally, we conclude our proof by adding the last two coordinates $(b_2,c_2;k)\to b_2$ and  $(b_2,c_2;k)\to c_2$. We shall look at the map $S_4\to \R^2, (b_2,c_2;k_0)\to (b_2,c_2)$. The projection is a local diffeomorphism at $(0,0;k_0)$ because $S_4$ is an open submanifold of $\R\times \R\times \{k_0\}$. Then the (local) composition
	\[S_1\rightarrow \dots \rightarrow S_4\times \R^3\to \R^2\times \R^3\]
	gives the local coordinate system we seek after arranging the components in the suitable order. Furthermore, it is clear that $(b_2,c_2;k_0)$ is always mapped to $(b_2,c_2;0,0,0)$.
\end{proof}

\begin{lemma} \label{lem-v1}
Let $H\in \fa$ be singular and not 0 and let $\alpha$ be the unique positive root that vanishes at $H$. Then we have
	\[B(H_1,\Ad k(X))=0,\quad H_1\in\fa, k\in \rK_H, X\in \fq\ominus \fs_\alpha\]
\end{lemma}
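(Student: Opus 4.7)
Since $\fp$ is $\Ad(\rK)$-invariant and decomposes $B$-orthogonally as $\fp=\fa\oplus\fq$, and since $B|_\fa$ is nondegenerate, the conclusion $B(H_1,\Ad(k)X)=0$ for every $H_1\in\fa$ is equivalent to $\Ad(k)X$ having zero $\fa$-component. I will actually prove the stronger statement that $\Ad(k)$ preserves $\fq\ominus\fp_\alpha=\bigoplus_{\beta\in\Rr^+\setminus\{\alpha\}}\fp_\beta$ for every $k\in\rK_H$. Using the decomposition $\rK_H=\rM\cdot\rK_H^0$ already recorded in Section 3, it suffices to treat the cases $k\in\rM$ and $k\in\rK_H^0$ separately.

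For $k\in\rM$: by definition $\Ad(k)$ restricts to the identity on $\fa$, so it commutes with $\ad H_0$ for every $H_0\in\fa$ and therefore preserves each weight space $\fg_\beta$. In particular it preserves $\fg_\beta\oplus\fg_{-\beta}$, and intersecting with $\fp$ gives $\Ad(k)(\fp_\beta)\subseteq\fp_\beta$ for each $\beta\in\Rr^+$.

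For $k\in\rK_H^0$: I first identify $\Lie(\rK_H)$. Writing an arbitrary element of $\fk$ in the form $X=\sum_{\delta\in\Rr^+}(X_\delta+\theta X_\delta)$ with $X_\delta\in\fg_\delta$, the identity $[H,X_\delta+\theta X_\delta]=\delta(H)(X_\delta-\theta X_\delta)$ combined with the hypothesis that $\alpha$ is the only positive root vanishing on $H$ gives $\Lie(\rK_H)=\fk_\alpha$. So every $k\in\rK_H^0$ equals $\exp Y$ for some $Y\in\fk_\alpha$. Since $[Y,H]=0$, the endomorphisms $\ad Y$ and $\ad H$ of $\fg$ commute, so $\ad Y$ preserves every eigenspace of $\ad H$. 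For $\beta\in\Rr^+\setminus\{\alpha\}$ the values $\pm\beta(H)$ are nonzero and distinct, so the corresponding eigenspaces of $\ad H$ are precisely $\fg_{\pm\beta}$. Hence $\ad Y$ preserves $\fg_\beta\oplus\fg_{-\beta}$; combining with the invariance of $\fp$ under $\ad\fk$ yields $(\ad Y)(\fp_\beta)\subseteq\fp_\beta$, and exponentiating gives $\Ad(\exp Y)(\fp_\beta)\subseteq\fp_\beta$.

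Putting the two cases together, $\Ad(k)$ preserves $\fp_\beta$ for every $\beta\in\Rr^+\setminus\{\alpha\}$ and every $k\in\rK_H$, so $\Ad(k)X\in\fq\ominus\fp_\alpha$ whenever $X\in\fq\ominus\fp_\alpha$, and $B(H_1,\Ad(k)X)=0$ follows. There is no genuine obstacle in this plan; the only slightly delicate point is the identification $\fk_H=\fk_\alpha$, which is immediate once one writes elements of $\fk$ in terms of their $\fg_\delta\oplus\fg_{-\delta}$ components and uses the singleness of the vanishing root.
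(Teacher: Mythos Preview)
Your overall strategy is sound, but one step is wrong as written. You claim that for $\beta\in\Rr^+\setminus\{\alpha\}$ the eigenspace of $\ad H$ with eigenvalue $\beta(H)$ is precisely $\fg_\beta$. This fails: in $\SL(3,\R)$ with positive roots $\alpha_1,\alpha_2,\alpha_3=\alpha_1+\alpha_2$, if $\alpha=\alpha_1$ then $\alpha_2(H)=\alpha_3(H)$, so the $\alpha_2(H)$-eigenspace of $\ad H$ is $\fg_{\alpha_2}\oplus\fg_{\alpha_3}$, not $\fg_{\alpha_2}$. Consequently $\ad Y$ (for $Y\in\fk_{\alpha_1}$) does \emph{not} preserve $\fp_{\alpha_2}$; a direct bracket computation shows $[\fk_{\alpha_1},\fp_{\alpha_2}]\subseteq\fp_{\alpha_3}$. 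So the assertion ``$\Ad(k)(\fp_\beta)\subseteq\fp_\beta$ for every $\beta\neq\alpha$'' is false for $k\in\rK_H^0$.

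The fix is immediate and does not damage your plan: you only need $\Ad(k)$ to preserve the \emph{sum} $\fq\ominus\fp_\alpha$, which is exactly what you announced at the start. Since $\ad Y$ commutes with $\ad H$, it preserves the direct sum of all nonzero eigenspaces of $\ad H$, namely $\bigoplus_{\gamma\in\Rr\setminus\{\pm\alpha\}}\fg_\gamma$; intersecting with $\fp$ gives $\fq\ominus\fp_\alpha$, and exponentiating finishes the argument. So drop the claim about individual $\fp_\beta$ and argue only with the block.

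For comparison, the paper's proof is shorter and avoids the case split on $\rM$ versus $\rK_H^0$: it moves $\Ad(k)$ across the Killing form and shows $\Ad(k^{-1})H_1\in\cent{\fg}{H}\cap\fp=\fa\oplus\fp_\alpha$, which is orthogonal to $\fq\ominus\fp_\alpha$. Your (corrected) argument instead shows $\Ad(k)X$ stays in $\fq\ominus\fp_\alpha$; this is a slightly stronger invariance statement, obtained at the cost of a longer proof.
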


\begin{proof}
$B(H_1,\Ad k(X))=B(\Ad{k^{-1}}H_1,X)$. For $k\in \rK_H\subseteq \rG_H$ and $H_1\in \fa\subseteq \cent{\fg}{H}=\Lie(\rG_H)$, $\Ad{k^{-1}}H_1\in \cent{\fg}{H}\cap \fs$. We know $\cent{\fg}{H}=\fa\oplus\fg_{\alpha}\oplus \fg_{-\alpha}$. So $\Ad{k^{-1}}H_1\in\fa\oplus \fs_\alpha$. While $X\in \fq\ominus \fs_\alpha$, $B(\Ad{k^{-1}}H_1,X)$ is 0.
\end{proof}

Now we move to the local normal forms of $f_1$ for $k_0\in \rK_{H_1}s\rK_{H'_1}\backslash \rM'$.

\begin{corollary}\label{cor-n4}
Assume $k_0$ is contained in the critical manifold $\rK_{H_1}s\rK_{H'_1}$ which has dimension 1. Let $\alpha_1,\alpha_2,\alpha_3\in \Rr^+$ be arranged as in Theorem \ref{thm-n1}. 

If $k_0\in \rK_{H_1}s\rK_{H'_1}\backslash \rM'$, then there exists a local coordinate system at $(0,0;k_0)$, $\kappa:S_1\to \R^5$ which preserves the parameters, that is, $\kappa(b_2,c_2;k)=(b_2,c_2;\cdot)$. $\kappa$ satisfies
	\[\begin{split}
	f_1\circ \kappa^{-1}(b_2,c_2;\cx_1,\cx_2,\cx_3)&=f_1(b_2,c_2;k_0)-\frac{1}{2}s\alpha_1(H)\alpha_1(H')\cx_1^2\\
	&-\frac{1}{2}s\alpha_2(H)\alpha_2(H')\cx_2^2
	-s\alpha_3(H)\alpha_3(H')\cx_3,
	\end{split}\]
	here $H=H_1+b_2H_2,H'+H'_1+c_2H'_2$. Besides, $\kappa$ always maps $(b_2,c_2;k_0)$ to $(b_2,c_2;0,0,0)$.
\end{corollary}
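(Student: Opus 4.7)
The plan is to follow the architecture of the proof of Theorem \ref{thm-n1}, reusing steps (1a)--(1e) and (2a)--(2f) essentially verbatim to produce the coordinates $\cx_1$, $\cx_2$ and the codimension-$2$ submanifold $S_3 \subseteq S_1$, and modifying only step (3). Those steps depend on $k_0$ only through $\Ad(k_0^{-1}) H_1 = s^{-1} H_1 \in \fa$ and the containment $\Ad(k_0^{-1}) H_2 \in \fa + \fp_{\alpha_3}$: the first is automatic for $k_0 \in \rK_{H_1} s$, and the second follows by writing $k_0 = k_1 k_s$ with $k_1 \in \rK_{H_1}$ and using that $\Ad(\rK_{H_1})$ preserves $\cent{\fp}{H_1} = \fa + \fp_{s\alpha_3}$ together with $\Ad(k_s^{-1}) \fp_{s\alpha_3} = \fp_{\alpha_3}$. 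Since $\fp_{\alpha_3}$ is $B$-orthogonal to $\fp_{\alpha_1}$ and $\fp_{\alpha_2}$, the required vanishing identities $g_{1,1}(b_2,c_2;k_0) = g_{2,2}(b_2,c_2;k_0) = 0$ continue to hold, producing the two quadratic summands in the normal form.

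The novelty enters in step (3). With $v_{3,3} = v_{1,3}|_{S_3}$ the same factorisation as in Theorem \ref{thm-n1},
\[
[v_{3,3} f_3](b_2,c_2;k) = -b_2 c_2 \alpha_3(H'_2) \cdot g_{3,3}(k), \qquad g_{3,3}(k) = B(H_2, \Ad k(X_{3,\alpha_3} - \theta X_{3,\alpha_3})),
\]
applies, but now $g_{3,3}(k_0) \neq 0$. The reason is that $\rK_{H_1}^0 = \exp(\fk_{s\alpha_3})$ acts on the orthogonal complement of $\R H_1$ in $\cent{\fp}{H_1}$, namely $\R H_2 + \fp_{s\alpha_3}$, by rotations, and $k_0 \notin \rM'$ forces this rotation to be nontrivial; hence $\Ad(k_0^{-1}) H_2$ has a nonzero component in $\fp_{\alpha_3}$, which pairs nontrivially with $X_{3,\alpha_3} - \theta X_{3,\alpha_3} \in \fp_{\alpha_3}$ under $B$. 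This nondegeneracy of $g_{3,3}(k_0)$ is the central new input; it is what allows a single Taylor step in $t_3$ to suffice and makes the square-root step of Theorem \ref{thm-n1} unnecessary, producing a linear rather than quadratic term.

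With $g_{3,3}(k_0) \neq 0$ in hand, I would take $S_4 = \R \times \R \times \{k_0\}$, a codimension-$1$ submanifold of $S_3$ to which $v_{3,3}$ is transverse since $v_{3,3}(0,0;k_0) = \inv{X_3}{L}(k_0) \neq 0$, and form the flow fibration $\kappa_3 : S_3 \to S_4 \times \R$ with projection $\pi_3(b_2, c_2; k) = (b_2, c_2; k_0)$. A single integral Taylor expansion yields
\[
[(f_3 - f_3 \circ \pi_3) \circ \kappa_3^{-1}](s_4, t_3) = -b_2 c_2 \alpha_3(H'_2) \cdot t_3 \cdot I(s_4, t_3), \qquad I(s_4, t_3) := \int_0^1 [g_{3,3} \circ \kappa_3^{-1}](s_4, t_3 z)\, dz,
\]
with $I$ nonvanishing at $((0,0;k_0),0)$ since $I((0,0;k_0),0) = g_{3,3}(k_0) \neq 0$. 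Inserting the nonzero constant $s\alpha_3(H_2)$ identifies the prefactor as $-s\alpha_3(H)\alpha_3(H')$, and setting $\ct_3 = t_3\, I(s_4, t_3)/s\alpha_3(H_2)$ gives a local change of fiber coordinate because $\partial_{t_3} \ct_3|_{((0,0;k_0),0)} = g_{3,3}(k_0)/s\alpha_3(H_2) \neq 0$. Composing $\cka_1, (\cka_2, \id), (\cka_3, \id^2)$ with the natural identification $S_4 \cong \R^2$ then delivers the required five-dimensional chart $\kappa$ preserving parameters and sending $(b_2,c_2;k_0)$ to $(b_2,c_2;0,0,0)$. Geometrically, the linear third term reflects the drift of the critical set away from $k_0$ at rate proportional to $b_2 c_2$ once $k_0$ fails to lie in $\rM'$.
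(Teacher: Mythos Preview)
Your proposal is correct and follows essentially the same approach as the paper. The paper likewise reruns steps (1)--(2) of Theorem \ref{thm-n1} (invoking Lemma \ref{lem-v1} for the vanishing of $g_{1,1},g_{2,2}$ along $\R\times\R\times\rK_{H_1}s\rK_{H'_1}$, which your containment $\Ad(k_0^{-1})H_2\in\fa+\fp_{\alpha_3}$ unpacks directly), and in step (3) establishes $g_{3,3}(0,0;k_0)\neq 0$ via Lemma \ref{lem-v2}, whose content is exactly your rotation argument; the linear-in-$\ct_3$ normal form is then obtained by the same single Taylor step you describe.
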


\begin{proof}
As $k_0\in \rK_{H'_1}s\rK_{H'_1}=s\rK_{H'_1}$, $k_0$ could be written as $k_sk_1$ with $k_s$ a representative of $s$ and $k_1\in \rK_{H'_1}$. According to our assumption $k_0\notin \rM'$, $k_1\in \rK_{H'_1}\backslash \rM'$.

We almost repeat what we have done in the proof of Theorem \ref{thm-n1} line by line. We only modify the argument at two or three places. In the following discussions, the labels like (1a) refer to those in the proof of Theorem \ref{thm-n1}.

Let the orthonormal basis $\{X_1,X_2,X_3\}$ of $\fk$ and vertical vector fields  $v_{1,1},v_{1,2},v_{1,3}$ on $S_1$ be chosen as in the proof of Theorem \ref{thm-n1}. 

We construct the first local coordinate $\cx_{1}:S_1\to \R$. As in (1a), we consider the function $v_{1,1}f_1:S_1\to \R$ and we have $v_{1,1}f_1=\mu_{1,1}g_{1,1}$, here $\mu_{1,1},g_{1,1}:S_1\to \R$ have exactly the same expressions \eqref{eq:1a1} and \eqref{eq:1a2} in (1a). 

As in (1b), we show that $g_{1,1}:S_1\to \R$ vanishes at $(b_2,c_2;k_0)\in S_1$ and $g_{1,1}$ is a submersion at $(0,0;k_0)$. This step is slightly different.  We have
	\[g_{1,1}(b_2,c_2;k_0)=B(s^{-1}(H_1+b_2H_2),\Ad k_1(X_{1,\alpha_1}-\theta X_{1,\alpha_1})).\] 
	As $k_1\in \rK_{H'_1},X_{1,\alpha_1}-\theta X_{1,\alpha_1}\in \fq\ominus \fs_{\alpha_3}, s^{-1}(H_1+b_2H_2)\in \fa$, Lemma \ref{lem-v1} tells that $g_{1,1}(b_2,c_2;k_0)$ is 0. We verify that $g_{1,1}$ is a submersion at $(0,0;k_0)$ in the same way as in (1b).  We have 

\[\Ad k_0^{-1} H_1=\Ad (k_1^{-1})s^{-1}H_1=s^{-1}H_1\]
because $k_1\in \rK_{H'_1}=\rK_{s^{-1}H_1}$. So $[v_{1,1}g_{1,1}](0,0;k_0)=s\alpha_1(H_1)\neq 0$.

	 The submanifold $S_1$ is defined as in (1c). We exactly redo (1d). We get a local fibration at $(0,0;k_0)$, $\kappa_1:S_1\to S_2\times \R$ and $\pi_1:S_1\to S_2$. We have $(f_1-f_1\circ \pi_1)\circ \kappa_1^{-1}:S_2\times \R\to \R$ vanishes along $S_2\times \{0\}$ up to the first order. We also have that for $(s_2,t_1)\in S_2\times \R$,
	 	\begin{eqnarray*}
		&&	\partial_{t_1}^2\left[(f_1-f_1\circ \pi_1)\circ \kappa_1^{-1}\right](s_2,t_1)\\
		&&\quad\quad\quad\quad\quad\quad\quad\quad=[\mu_{1,1}\circ\kappa_1^{-1}](s_2,t_1)\cdot [ v_{1,1}g_{1,1}\circ \kappa_1^{-1}](s_2,t_1)
		\end{eqnarray*}
		which has value $-\alpha_1(H'_1)s\alpha_1(H_1)\neq 0$ at $((0,0;k_0),0)$. 
		
	By repeating (1e), we get the first local coordinate $\cx_1:S_1\to \R$. We want to point out that the objects we encoutered here like $\kappa_1,\cx_1$ are indeed the same as those in the proof of Theorem \ref{thm-n1}. However, their exsitence are valid in different neighborhoods in $S_1$ because the term `local' makes sense in the neighborhoods of the different $(0,0;k_0)$.
	
	As we see, the steps for constructing the first local coordinate are almost the same as in the proof of Theorem \ref{thm-n1}. We claim the steps for constructing the second coordinate are almost the same as for the first coordinate. The readers could run through them one by one. So we skip this part and move to the construction of the third local coordinate while assuming objects like $f_2,S_3,\pi_3,\cx_2$ have been taken.
	
	We also have that $S_3$ is an open submanifold of $\R\times \R\times \rK_{H_1}s\rK_{H'_1}$. We repeat (3a) and get $v_{3,3}$ which coincides with the restriction of $v_{1,3}$ to $S_3$. We have $v_{3,3}f_{3}=\mu_{3,3}g_{3,3}$, here $\mu_{3,3},g_{3,3}:S_3\to \R$ have the same expressions \eqref{eq:3b1} and \eqref{eq:3b2} in (3b).
	
	Now we have a different situation which simplifies our argument. We claim that $g_{3,3}(0,0;k_0)$ is not 0. 
	\[g_{3,3}(0,0;k_0)=B(s^{-1}H_2,\Ad k_1 (X_{3,\alpha_3}-\theta X_{3,\alpha_3})).\]
	We single out this fact as Lemma \ref{lem-v2} and present it after this proof.
	
	We set $S_4$ to be $\{(b_2,c_2;k_0)\in S_3\}$, which is an open submanifold of $\R\times \R\times \{k_0\}$. The flow generated by $v_{3,3}$ induces a local fibration at $(0,0;k_0)$, $\kappa_3:S_3\to S_4\times \R$. $\pi_3:S_3\to S_4$ exactly sends $(b_2,c_2;k)\in S_3$ to $(b_2,c_2;k_0)\in S_4$. We apply the Taylor's formula to $(f_3-f_3\circ \pi_3)\circ \kappa_3^{-1}:S_4\times \R\to \R$ in the $t_3$ variable,
	\begin{eqnarray*}
		&&\left [(f_3-f_3\circ \pi_4)\circ \kappa_3^{-1}\right](s_4,t_3)\\
		&&\quad\quad\quad\quad\quad\quad =-c_2\alpha_3(H'_2)b_2t_3\int_0^1 \left [g_{3,3}\circ\kappa_3^{-1}\right](s_4,t_3z)dz.
	\end{eqnarray*}
	We put
	\[\ct_3(s_4,t_3)=\frac{t_3}{s\alpha_3(H_2)}\int_0^1 [g_{3,3}\circ\kappa_3^{-1}](s_4,t_3z)dz.\] 
	We get a local fibration at $(0,0;k_0)$, $\cka_3:S_3\to S_4\times \R, s_3\to (s_4,\ct_3)$ such that
	\[\left [(f_3-f_3\circ \pi_4)\circ \cka_3^{-1}\right](s_4,\ct_3)=-s\alpha_3(H)\alpha_3(H')\ct_3.\]
	Going through the rest of the routine, we establish the corollary.
\end{proof}

\begin{lemma} \label{lem-v2}
Let $H\in \fa$ be singular and not 0, let $\alpha$ be the unique positive root that vanishes at $H$ and let $X\in \fs_\alpha$ not 0. Then for $k\in \rK_H$, $B(\alpha^\vee,\Ad k(X))$ is 0 if and only if $k\in \rM'_H$.
\end{lemma}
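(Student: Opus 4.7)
The plan is to use $\Ad$-invariance of the Killing form to swap the roles of $\alpha^\vee$ and $X$, writing $B(\alpha^\vee, \Ad k(X)) = B(\Ad(k^{-1})\alpha^\vee, X)$, and then identify when this vanishes by a positional condition on $\Ad(k^{-1})\alpha^\vee$.

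The first step repeats the centralizer observation underlying Lemma \ref{lem-v1}. Since $k \in \rK_H$ and $\cent{\fg}{H} = \fa \oplus \fg_\alpha \oplus \fg_{-\alpha}$, the operator $\Ad(k^{-1})$ preserves $\cent{\fg}{H}$; it also preserves $\fp$ because $k \in \rK$. Hence $\Ad(k^{-1})\alpha^\vee \in \cent{\fg}{H} \cap \fp = \fa \oplus \fp_\alpha$. Writing $\Ad(k^{-1})\alpha^\vee = H_* + Y_*$ with $H_* \in \fa$ and $Y_* \in \fp_\alpha$, and using that $\fa$ is $B$-orthogonal to $\fg_{\pm\alpha}$ together with $X \in \fp_\alpha$, the pairing collapses to $B(Y_*, X)$. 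Because $\fp_\alpha$ is one-dimensional and $B$ is positive definite on $\fp$, and $X \neq 0$, this vanishes if and only if $Y_* = 0$, equivalently $\Ad(k^{-1})\alpha^\vee \in \fa$.

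The remaining step interprets this inclusion. Because $k \in \rK_H$ we already have $\Ad(k^{-1})H = H \in \fa$; because $\alpha(H) = 0$, the vectors $H$ and $\alpha^\vee$ are $B$-orthogonal in the two-dimensional $\fa$, so they span it. Therefore $\Ad(k^{-1})\alpha^\vee \in \fa$ forces $\Ad(k^{-1})\fa = \fa$, i.e., $k \in \rM'$; combined with $k \in \rK_H$ this is precisely $k \in \rM'_H$. The converse is immediate since any $k \in \rM'$ preserves $\fa$ under $\Ad$. I do not anticipate a real obstacle here: once the centralizer-plus-Cartan splitting of Lemma \ref{lem-v1} locates $\Ad(k^{-1})\alpha^\vee$ in $\fa \oplus \fp_\alpha$, the conclusion is bookkeeping on the two-dimensional subspace $\fa$, using that $H$ and $\alpha^\vee$ are automatically a basis of it under the singularity hypothesis on $H$.
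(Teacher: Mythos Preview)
Your proof is correct and follows essentially the same route as the paper: move $\Ad$ to the other slot, place $\Ad(k^{-1})\alpha^\vee$ inside $\fa\oplus\fp_\alpha$, use that $\fp_\alpha$ is one-dimensional with $B$ definite on $\fp$ to kill the $\fp_\alpha$-component, and then use that $\{H,\alpha^\vee\}$ spans $\fa$ to conclude $k$ normalizes $\fa$. The only difference is that the paper first writes $k=mk_1$ with $m\in\rM$, $k_1\in\rK_H^0$ and works inside the $\mathfrak{sl}_2$-type ideal $\fg_1=\R\alpha^\vee\oplus\fg_\alpha\oplus\fg_{-\alpha}$ to land $\Ad(k_1^{-1})\alpha^\vee$ in $\R\alpha^\vee\oplus\fp_\alpha$, whereas you go straight through the full centralizer $\cent{\fg}{H}\cap\fp=\fa\oplus\fp_\alpha$; your version is slightly more direct since it avoids the $\rM\rK_H^0$ decomposition and the analytic-subgroup step.
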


\begin{proof}
If $k\in \rM'_H$, it is obvious $B(\alpha^\vee,\Ad k(X))=0$ for $\Ad k^{-1}\alpha^\vee\in\fa$ and $X\in \fq$. 

We show the reverse direction. Let $\fg_1$ be the Lie subalgebra $\R\alpha^\vee\oplus \fg_\alpha\oplus \fg_{-\alpha}$ which is an ideal of $\cent{\fg}{H}$. Let $\rG_1$ be the analytic subgroup of $\rG$ with $\Lie(\rG_1)=\fg_1$. 

As $\Lie(\rK_H)=\fk_\alpha$ is a Lie subalgebra of $\fg_1$, $\rK_H^0$ is an analytic subgroup of $\rG_1$. As $k\in \rK_H=\rM\rK_H^0$, it could be written as $mk_1$, here $m\in \rM$ and $k_1\in \rK_H^0$. Beware that this decomposition is not unique. Then \[B(\alpha^\vee,\Ad k(X))=B(\Ad k^{-1} \alpha^\vee,X)=B(\Ad k_1^{-1} \alpha^\vee,X).\]

 As $\Ad k_1^{-1}\alpha^\vee\in \fg_1\cap \fs=\R\alpha^\vee\oplus \fs_\alpha$ and $\fs_\alpha=\R X$, $B(\Ad k_{1}^{-1}\alpha^\vee,X)=0$ implies that $\Ad k_1^{-1}\alpha^\vee$ lands in $\R\alpha^{\vee}$. Since $\len{\Ad k_1^{-1}\alpha^\vee}=\len{\alpha^\vee}$, we have $\Ad k_1^{-1}\alpha^\vee=\pm \alpha^\vee$. Note that $\Ad k_1^{-1}H=H$ and $\fa=\R H\oplus \R\alpha^\vee$. Thus $\Ad k_1^{-1}\fa=\fa$, that is, $k_1^{-1}\in \rM'$. As $k_1\in \rK_{H}^0$, $k=mk_1\in \rM'_H$.
\end{proof}

Now we discuss the situation that $k_0$ is contained in the critical manifold $\rK_{H_1}s\rK_{H'_1}$ with dimension 2.

The critical manifold $\rK_{H_1}s\rK_{H'_1}=s\rK_{s^{-1}H_1}\rK_{H'_1}$ has dimension 2 if and only if $\rK_{s^{-1}H_1}\neq \rK_{H'_1}$. It is equivalent to $s^{-1}H_1\neq \pm H'_1$, namely, $s^{-1}H_1$ and $H'_1$ belong to different root hyperplanes.

We consider the following partition of the critical manifold $\rK_{H_1}s\rK_{H'_1}$,
\[\begin{split}\rK_{H_1}s\rK_{H'_1}&=\left(\rK_{H_1}s\rK_{H'_1}\cap \rM'\right)\cup \left(\rK_{H_1}s\rK_{H'_1}\backslash (\rM'_{H_1}s\rK_{H'_1}\cup \rK_{H_1}s\rM'_{H'_1})\right)\\
&\cup \left(\rK_{H_1}s\rM'_{H'_1}\backslash \rM'_{H_1}s\rK_{H'_1}\right)\cup \left(\rM'_{H_1}s\rK_{H'_1}\backslash \rK_{H_1}s\rM'_{H'_1}\right).
\end{split}\]
This partition makes sense for the following reason. First, we have the fact that \[\rM'_{H_1}s\rK_{H'_1}\cap \rK_{H_1}s\rM'_{H'_1}=\rM'_{H_1}s\rM'_{H'_1}\subseteq \rM'.\]
 Then we show that $\rK_{H_1}s\rK_{H'_1}\cap \rM' \subseteq \rM'_{H_1}s\rM'_{H'_1}$, so \[\rK_{H_1}s\rK_{H'_1}\cap \rM'=\rM'_{H_1}s\rK_{H'_1}\cap \rK_{H_1}s\rM'_{H'_1}. \]
 Assuming that $k_0=k_1k_sk_2\in \rK_{H_1}s\rK_{H'_1}\cap \rM'$, we show $k_1\in \rM'_{H_1}$ and $k_2\in \rM'_{H'_1}$. Because $k_0k_2^{-1}=k_1k_s$, we have $\Ad (k_0k_2^{-1})H'_1=\Ad (k_0)H'_1\in\fa$ and $\Ad(k_1k_s)s^{-1}H_1=\Ad(k_1)H_1=H_1$. Since $\fa=\R H'_1+\R s^{-1}H_1$, $k_0k_2^{-1}=k_1k_s\in \rM'$, thus $k_1,k_2\in \rM'$. So $k_1\in \rM'_{H_1},k_2\in \rM'_{H'_1}$.

We first work on the local normal form of $f_1$ for $k_0\in \rK_{H_1}s\rK_{H'_1}\cap \rM'$, then for $k_0\in \rK_{H_1}s\rK_{H'_1}\backslash(\rM'_{H_1}s\rK_{H'_1}\cup \rK_{H_1}s\rM'_{H'_1})$, and lastly for the intermediate cases $k_0\in \rK_{H_1}s\rM'_{H'_1}\backslash \rM'_{H_1}s\rK_{H'_1}$ and $k_0\in \rM'_{H_1}s\rK_{H'_1}\backslash \rK_{H_1}s\rM'_{H'_1}$.

We arrange the positive roots $\alpha_1,\alpha_2,\alpha_3$ in the way such that $\alpha_2$, resp. $\alpha_3$ is the unique positive root which vanishes at $H'_1$, resp. $s^{-1}H_1$. Then we have $\Lie(\rK_{H'_1})=\fk_{\alpha_2}$, $\Lie(\rK_{s^{-1}H_1})=\fk_{\alpha_3}$ and $\Lie(\rK_{H_1})=\Ad (k_s) \fk_{\alpha_3}$. 

We fix a representative $k_s\in \rM'$ of $s\in \rW$. As $\rK_{H_1}s\rK_{H'_1}=\rK_{H_1}k_s\rK_{H'_1}$, $k_0$ could be written as $k_1k_sk_2$, with $k_1\in \rK_{H_1},k_2\in \rK_{H'_1}$. Beware that this decomposition is not unique.

We identify $\tg{\rK_{H_1}s\rK_{H'_1}}{k_0}$ with a subspace of $\tg{\rK}{k_0}=\fk$ and have
	\[\tg{\rK_{H_1}s\rK_{H'_1}}{k_0}=\fk_{\alpha_2}+\Ad(k_0^{-1})\Ad(k_s)\fk_{\alpha_3}.\] 
As $\Ad(k_0^{-1})\Ad(k_s)\fk_{\alpha_3}=\Ad (k_2^{-1}k_s^{-1}k_1^{-1}k_s)\fk_{\alpha_3}$ and $k_s^{-1}k_1^{-1}k_s\in \rK_{s^{-1}H_1}$, we have $\Ad(k_0^{-1}k_s)\fk_{\alpha_3}=\Ad (k_2^{-1})\fk_{\alpha_3}$. Since $k_2\in \rK_{H'_1}$ and $\dim \Lie(\rK_{H'_1})=\dim \fk_{\alpha_2}=1$, we have $\fk_{\alpha_2}=\Ad (k_2^{-1})\fk_{\alpha_2}$. Hence the sum
	\[\tg{\rK_{H_1}s\rK_{H'_1}}{k_0}=\Ad(k_2^{-1})\fk_{\alpha_2}+\Ad(k_2^{-1})\fk_{\alpha_3}\]
	is indeed an orthogonal sum with respect to $B_\theta$. Furthermore, the orthogonal complement of $\tg{\rK_{H_1}s\rK_{H'_1}}{k_0}$ in $\fk$ is $\Ad (k_2^{-1})\fk_{\alpha_1}$.

\begin{theorem} \label{thm-n2}
Assume $k_0$ is contained in the critical manifold $\rK_{H_1}s\rK_{H'_1}$ which has dimension 2. Let $\alpha_1,\alpha_2,\alpha_3$ be arranged as above, that is, $\alpha_2(H'_1)$ and $s\alpha_3(H_1)$ are 0. 

If $k_0\in \rK_{H_1}s\rK_{H'_1}\cap \rM'$, then there exists a local coordinate system at $(0,0;k_0)$, $\kappa:S_1\to \R^5$ which preserves the parameters. $\kappa$ satisfies
	\[
		\begin{split}
			f_1\circ \kappa^{-1}(b_2, c_2; \cx_1, \cx_2, \cx_3)&=f_1(b_2, c_2; k_0)-\frac{1}{2}s\alpha_1(H)\alpha_1(H')\cx_1^2\\
			&-\frac{1}{2}\epsilon_2 s\alpha_2(H)\alpha_2(H')\cx_2^2-\frac{1}{2}\epsilon_3 s\alpha_3(H)\alpha_3(H')\cx_3^2,
		\end{split}
	\]
	here $H=H_1+b_2H_2,H'=H'_1+c_2H'_2$, $\epsilon_2,\epsilon_3$ are constants $\pm 1$  which are determined explicitly. Besides, $\kappa$ always maps $(b_2,c_2;k_0)$ to $(b_2,c_2;0,0,0)$. 
\end{theorem}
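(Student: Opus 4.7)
The strategy is to adapt the inductive scheme from the proof of Theorem \ref{thm-n1}, constructing three local coordinates $\cx_1$, $\cx_2$, $\cx_3$ sequentially. The essential structural change is that the critical manifold $\rK_{H_1}s\rK_{H'_1}$ now has dimension $2$, so only the $\alpha_1$-direction is transverse, while the $\alpha_2$- and $\alpha_3$-directions lie along it. Consequently $\cx_1$ will be built by a direct transcription of steps (1a)--(1e) of Theorem \ref{thm-n1}, whereas each of $\cx_2$ and $\cx_3$ will require the singular-factor extraction trick used in (3a)--(3f) there, since a linear singularity (in $c_2$ for $\cx_2$, in $b_2$ for $\cx_3$) appears in the respective Taylor coefficient.

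For the $\cx_1$-step, choose an orthonormal basis $X_i\in\fk_{\alpha_i}$ and let $v_{1,i}$ denote the canonical extensions of $\inv{X_i}{L}$ to $S_1$. The factorization $v_{1,1}f_1=\mu_{1,1}g_{1,1}$ has $\mu_{1,1}=-\alpha_1(H'_1+c_2H'_2)$ nonvanishing near the origin since $\alpha_1(H'_1)\neq 0$. Because $k_0\in\rM'$, $\Ad(k_0^{-1})(H_1+b_2H_2)$ lies in $\fa$, and the Killing-form orthogonality $\fa\perp\fp_{\alpha_1}$ gives $g_{1,1}(b_2,c_2;k_0)=0$; moreover $[v_{1,1}g_{1,1}](0,0;k_0)=s\alpha_1(H_1)\neq 0$. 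Steps (1c)--(1e) then apply almost verbatim to produce $\cx_1$ with coefficient $-\frac{1}{2}s\alpha_1(H)\alpha_1(H')$.

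For $\cx_2$ (root $\alpha_2$, $\alpha_2(H'_1)=0$), I project $v_{1,2}$ onto $v_{2,2}$ on $S_2$ via $v_{1,1}$-transversality. After checking $[v_{1,1}g_{1,2}](0,0;k_0)=0$ (the brackets land in distinct root spaces), Lemma \ref{lem-id} yields $v_{2,2}f_2=\mu_{2,2}g_{2,2}$ with $\mu_{2,2}(b_2,c_2;k)=-c_2\alpha_2(H'_2)$ carrying the linear $c_2$-singularity. Again $g_{2,2}(b_2,c_2;k_0)=0$ and $[v_{2,2}g_{2,2}](0,0;k_0)=\epsilon_2\cdot s\alpha_2(H_1)\neq 0$, the sign $\epsilon_2=\pm 1$ being recorded by whether the $k_2\in\rM'_{H'_1}$ factor in the decomposition $k_0=k_1k_sk_2$ is trivial modulo $\rM$ or represents the reflection $r_{\alpha_2}$. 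Steps (3d)--(3f) then apply: construct $S_3=g_{2,2}^{-1}(\{0\})$, extract $\mu_{2,2}$ outside the Taylor integral, multiply and divide by $\epsilon_2\cdot s\alpha_2(H_1+b_2H_2)$ (which is nonzero since $s\alpha_2(H_1)\neq 0$), and take the square root to obtain $\cx_2$ with coefficient $-\frac{1}{2}\epsilon_2 s\alpha_2(H)\alpha_2(H')\cx_2^2$. The construction of $\cx_3$ (root $\alpha_3$, $s\alpha_3(H_1)=0$) is symmetric but originates from the $H$-side: following the general principle that singularities coming from $H$ are resolved by right-invariant fields in $\fk_{s\alpha_3}$, I use $\inv{Y_3}{R}$ for a unit $Y_3\in\fk_{s\alpha_3}$, for which $[\inv{Y_3}{R}f_1](k)=b_2 s\alpha_3(H_2)\cdot B(Y_{3,s\alpha_3}-\theta Y_{3,s\alpha_3},\Ad k(H'_1+c_2H'_2))$ already displays the $b_2$-singularity. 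The corresponding projection onto $S_3$, the analogous vanishing of $g$ at $k_0$ via $k_0\in\rM'$, and the Taylor square-root normalization yield $\cx_3$ with coefficient $-\frac{1}{2}\epsilon_3 s\alpha_3(H)\alpha_3(H')\cx_3^2$, where $\epsilon_3=\pm 1$ is determined by the action of the $k_1\in\rM'_{H_1}$ factor on $H'$.

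The main obstacle is organizational: each singular factor must be pulled outside \emph{before} the square-root step of the Taylor expansion, otherwise the resulting change of coordinates degenerates along the singular locus; and the transition from left-invariant ($v_{1,2}$) to right-invariant ($\inv{Y_3}{R}$) vector fields across the $\cx_2$- and $\cx_3$-constructions must be arranged so that the successive submanifolds $S_2\supset S_3\supset S_4$ remain well-nested, with $S_4$ eventually identified as an open submanifold of $\R^2\times\{k_0\}$ so that the last projection $S_4\to\R^2$ becomes a local diffeomorphism. Once this bookkeeping is settled, composing the three flow fibrations with the projection onto the parameters $(b_2,c_2)$ produces the required parameter-preserving local coordinate system, mapping $(b_2,c_2;k_0)$ to $(b_2,c_2;0,0,0)$.
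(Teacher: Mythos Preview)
Your proposal is essentially correct and follows the same inductive scheme as the paper: a nonsingular Morse reduction for $\cx_1$, then two singular-factor extractions for $\cx_2$ (carrying $c_2$) and $\cx_3$ (carrying $b_2$, via a right-invariant field in $\fk_{s\alpha_3}$), with the signs $\epsilon_2,\epsilon_3$ read off from the $\rM'$-factors $k_2\in\rM'_{H'_1}$ and $k_1\in\rM'_{H_1}$ in $k_0=k_1k_sk_2$.

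One technical point deserves attention. You take $v_{1,1}$ to be the extension of $\inv{X_1}{L}$, whereas the paper uses the conjugated field $\inv{(\Ad(k_2^{-1})X_1)}{L}$. This is not cosmetic: with the paper's choice, $g_{1,1}$ and $g_{2,2}$ vanish identically on $\R\times\R\times\rK_{H_1}k_0$ (via Lemma~\ref{lem-v1}), so $S_3$ is literally an open piece of $\R\times\R\times\rK_{H_1}k_0$. Since the right-invariant field $\inv{(\Ad(k_s)X_3)}{R}$ is tangent to $\rK_{H_1}k_0$, its successive projections give $v_{3,3}=v_{1,3}|_{S_3}$ exactly, and $[v_{3,3}g_{3,3}](0,0;k_0)$ is computable by a single Killing-form calculation. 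With your unconjugated $\inv{X_1}{L}$, when $k_2$ represents $r_{\alpha_2}$ the element $\Ad(k_2)(X_{1,\alpha_1}-\theta X_{1,\alpha_1})$ lands in $\fp_{\alpha_3}$ rather than $\fp_{\alpha_1}$, Lemma~\ref{lem-v1} no longer forces $g_{1,1}$ to vanish along $\rK_{H_1}k_0$, and the identification of $S_3$ is lost. Your scheme can still be salvaged---the cross-derivatives $[v_{1,3}g_{1,1}]$ and $[v_{1,3}g_{1,2}]$ carry the factor $s\alpha_3(H)$ and hence vanish at $(0,0;k_0)$, so $v_{3,3}(0,0;k_0)=v_{1,3}(0,0;k_0)$ pointwise---but this is exactly the ``well-nested'' bookkeeping you flag at the end, and the paper's conjugation trick is what dissolves it cleanly.
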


\begin{proof}
Due to the arrangement of the positive roots, we have that $\alpha_1(H'_1)$, $\alpha_3(H'_1)$, $s\alpha_1(H_1)$, $s\alpha_2(H_1)$ are all not 0.

According to the assumption of $k_0=k_1k_sk_2\in \rK_{H_1}s\rK_{H'_1}\cap \rM'=\rM'_{H_1} s \rM'_{H_1'}$, we have $k_1\in \rM'_{H_1}$ and $k_2\in \rM'_{H'_1}$. 

As $k_2\in \rM'_{H'_1}$, we have $\Ad(k_2)\alpha_2^\vee=\pm \alpha_2^\vee$ then set $\epsilon_2=\Ad(k_2)\alpha_2^\vee/\alpha_2^\vee$. As $k_1\in \rM'_{H_1}$ and $k_s^{-1}k_1^{-1}k_s\in \rM'_{s^{-1}H_1}$, we  have $\Ad (k_s^{-1}k_1^{-1}k_s)\alpha_3^\vee=\pm \alpha_3^\vee$ then set $\epsilon_3=\Ad (k_s^{-1}k_1^{-1}k_s)\alpha_3^\vee/\alpha_3^\vee$.

Let $\{X_1,X_2,X_3\}$ be an orthonormal basis of $\fk$ such that $X_i\in \fk_{\alpha_i}$, for $i=1,2,3$. Let $v_1$ be the left invariant vector field $\inv{(\Ad(k_2^{-1})X_1)}{L}$, let $v_2$ be the left invariant vector field $\inv{X_2}{L}$ and let $v_3$ be the right invariant vector field $\inv{(\Ad(k_s)X_3)}{R}$. We point out that the restrictions of $v_2$ and $v_3$ to the critical manifold $\rK_{H_1}s\rK_{H_1'}$ are tangent to $\rK_{H_1}s\rK_{H_1'}$. We also have $v_1$ transverse to $\rK_{H_1}s\rK_{H_1'}$ at $k_0$. Let $v_{1,1},v_{1,2},v_{1,3}$ be the canonical extensions of $v_1,v_2,v_3$ to $S_1=\R\times\R\times \rK$.

We construct the first local coordinate $\cx_1:S_1\to \R$. Consider the function $v_{1,1}f_1:S_1\to \R$,
	\begin{eqnarray*}
	&&[v_{1,1}f_1](b_2,c_2;k)=-\alpha_1(H'_1+c_2\Ad(k_2)H'_2)\\
	&&\quuad\quad\quad \cdot B(H_1+b_2H_2,\Ad (kk_2^{-1})(X_{1,\alpha_1}-\theta X_{1,\alpha_1})).
	\end{eqnarray*}
	We put $\mu_{1,1},g_{1,1}:S_1\to \R$,
		\[\mu_{1,1}(b_2,c_2;k):=-\alpha_1(H'_1+c_2\Ad(k_2)H'_2),\]
		\[g_{1,1}(b_2,c_2;k):=B(H_1+b_2H_2,\Ad (kk_2^{-1})(X_{1,\alpha_1}-\theta X_{1,\alpha_1})).\]
	Note that $\Ad(k_2)H'_2=\epsilon_2 H'_2$.
	We show that $g_{1,1}:S_1\to \R$ vanishes at $\{(b_2,c_2,k'k_0)\in S_1|k'\in \rK_{H_1}\}$. We have
		\[\begin{split}
			g_{1,1}(b_2,c_2;k'k_0)&=B(H,\Ad (k'k_1k_s)(X_{1,\alpha_1}-\theta X_{1,\alpha_1}))\\
			                                    &=B(s^{-1}H, \Ad (k_s^{-1}k'k_1k_s)(X_{1,\alpha_1}-\theta X_{1,\alpha_1})),
		\end{split}\]
		here $H$ stands for $H_1+b_2H_2$ for convenience. 
		
		As $k_s^{-1}k'k_1k_s\in \rK_{s^{-1}H_1}$, $\alpha_3(s^{-1}H_1)=0$ and $X_{1,\alpha_1}-\theta X_{1,\alpha_1}\in \fq\ominus \fs_{\alpha_3}$, Lemma \ref{lem-v1} tells that $g_{1,1}(b_2,c_2;k'k_0)$ is 0 when $k'\in \rK_{H_1}$.
		
		We show that $g_{1,1}$ is a submersion at $(0,0;k_0)$ by showing $[v_{1,1}g_{1,1}](0,0;k_0)$ not 0.
			\[\begin{split}
			[v_{1,1}g_{1,1}](0,0;k_0)&=B(H_1,\Ad k_0[\Ad k_2^{-1}X_1,\Ad k_2^{-1}(X_{1,\alpha_1}-\theta X_{1,\alpha_1})])\\
			&=B(s^{-1}H_1,[X_1,X_{1,\alpha_1}-\theta X_{1,\alpha_1}])\\
			&=s\alpha_1(H_1)\neq 0.
			\end{split}\]
		As in the proof of Theorem \ref{thm-n1}, let $S_2$ be the submanifold of $S_1$ corresponding to the level set $g_{1,1}^{-1}(\{0\})$. Then $\{(b_2,c_2;k'k_0)\in S_2|k'\in \rK_{H_1}\}$ could be identified with a 3-dimension submanifold of $S_2$. We introduce a local fibration at $(0,0;k_0)$, $\kappa_1:S_1\to S_2\times \R$ and set $\pi_1:S_1\to S_2$ accordingly. Applying the same method in the proof of Theorem \ref{thm-n1}, we obtain a local fibration at $(0,0;k_0)$, $\cka_1:S_1\to S_2\times \R, s_1\to (s_2,\ct_1)$ such that
		\[f_1(b_2,c_2;k)=f_1\circ \pi_1(b_2,c_2;k)-\frac{1}{2}s\alpha_1(H)\alpha_1(H')\ct_1^2.\]
		
		We construct the second local coordinate $\cx_2:S_1\to \R$. Let $v_{2,2},v_{2,3}$ be the projections of $v_{1,2},v_{1,3}$ to $S_2$ with respect to $v_{1,1}$. Consider the functions $v_{1,2}f_1,v_{1,3}f_1:S_1\to \R$. We have for $(b_2,c_2;k)\in S_1$,
		\[[v_{1,2}f_1](b_2,c_2;k)=\mu_{1,2}(b_2,c_2;k)g_{1,2}(b_2,c_2;k),\]
		\begin{equation}\label{eq:4-1}
			\mu_{1,2}(b_2,c_2;k):=-\alpha_2(H'_1+c_2H'_2)=-c_2\alpha_2(H'_2),
		\end{equation}
		\begin{equation}\label{eq:4-2}
			g_{1,2}(b_2,c_2;k):=B(H_1+b_2H_2,\Ad k(X_{2,\alpha_2}-\theta X_{2,\alpha_2})),
		\end{equation}
			\[[v_{1,3}f_1](b_2,c_2;k)=\mu_{1,3}(b_2,c_2;k)g_{1,3}(b_2,c_2;k),\]
		\begin{equation}\label{eq:4-3}
		\mu_{1,3}(b_2,c_2;k):=s\alpha_3(H_1+b_2H_2)=b_2s\alpha_3(H_2),
		\end{equation}
		\begin{equation}\label{eq:4-4}
		g_{1,3}(b_2,c_2;k):=B(\Ad k_s(X_{3,\alpha_3}-\theta X_{3,\alpha_3}),\Ad k(H'_1+c_2H'_2)),
		\end{equation}
	here $\alpha_2(H'_1)=s\alpha_3(H_1)=0$ according to our choice of $\alpha_2,\alpha_3$.
	
	Let $f_2:S_2\to \R$ be the restriction of $f_1:S_1\to \R$ to $S_2$. We also restrict $\mu_{1,2},\mu_{1,3},g_{1,2},g_{1,3}:S_1\to \R$ to $S_2$ and denote these restrictions by $\mu_{2,2},\mu_{2,3},g_{2,2},g_{2,3}:S_2\to \R$. Since $v_{1,1}f_1:S_1\to \R$ vanishes on $S_2$, Lemma \ref{lem-id} gives that for $s_2\in S_2$,
		\[[v_{2,2}f_2](s_2)=[v_{1,2}f_1](s_2),\quad [v_{2,3}f_2](s_2)=[v_{1,3}f_1](s_2).\]
We also have for $s_2\in S_2$,
	\[[v_{2,2}f_2](s_2)=\mu_{2,2}(s_2)g_{2,2}(s_2),\quad [v_{2,3}f_2](s_2)=\mu_{2,3}(s_2)g_{2,3}(s_2).\]
	
	We show that $g_{2,2}:S_2\to \R$ vanishes at $\{(b_2,c_2;k'k_0)\in S_2|k'\in\rK_{H_1}\}$.
		\[\begin{split}
			g_{2,2}(b_2,c_2;k'k_0)&=B(H,\Ad (k'k_0)(X_{2,\alpha_2}-\theta X_{2,\alpha_2}))\\
										&=B(s^{-1}H,\Ad (k_s^{-1}k'k_1k_s)\Ad (k_2)(X_{2,\alpha_2}-\theta X_{2,\alpha_2})).
		\end{split}\]
	As $k_2\in \rM'_{H'_1}$, $X_{2,\alpha_2}-\theta X_{2,\alpha_2}\in \fs_{\alpha_2}$ and $\alpha_2(H'_1)=0$, we have 
		\[\Ad (k_2)(X_{2,\alpha_2}-\theta X_{2,\alpha_2})=\pm (X_{2,\alpha_2}-\theta X_{2,\alpha_2})\in \fs_{\alpha_2}.\] 
	Since $k_s^{-1}k'k_1k_s\in \rK_{s^{-1}H_1}$, $\alpha_3(s^{-1}H_1)=0$ and $\fs_{\alpha_2}\subseteq \fq\ominus \fs_{\alpha_3}$, Lemma \ref{lem-v1} tells that $g_{2,2}(b_2,c_2;k'k_0)=0$ when $(b_2,c_2;k'k_0)\in S_2$. 
	
	We show that $g_{2,2}:S_2\to \R$ is a submersion at $(0,0;k_0)$ by showing $[v_{2,2}g_{2,2}](0,0;k_0)\neq 0$. We first show that $[v_{1,1}g_{1,2}](0,0;k_0)$ is 0. We have
	\[\begin{split}
		[v_{1,1}g_{1,2}](0,0;k_0)&=B(H_1,\Ad k_0[\Ad k_2^{-1} X_1,X_{2,\alpha_2}-\theta X_{2,\alpha_2}])\\
	     &=B(s^{-1}H_1, [X_1,\Ad k_2(X_{2,\alpha_2}-\theta X_{2,\alpha_2})])\\
	     &=\alpha_1(s^{-1}H_1)B(X_{1,\alpha_1}-\theta X_{1,\alpha_1},\Ad k_2(X_{2,\alpha_2}-\theta X_{2,\alpha_2})).
	\end{split}\]
	Since $X_{1,\alpha_1}-\theta X_{1,\alpha_1}\in \fs_{\alpha_1}$ and $\Ad k_2(X_{2,\alpha_2}-\theta X_{2,\alpha_2})\in \fs_{\alpha_2}$, we know that $[v_{1,1}g_{1,2}](0,0;k_0)$ is 0.
	
	Then we show that $[v_{1,2}g_{1,2}](0,0;k_0)$ is not 0. We have
	\[\begin{split}
		[v_{1,2}g_{1,2}](0,0;k_0)&=B(H_1,\Ad k_0[X_2,X_{2,\alpha_2}-\theta X_{2,\alpha_2}])\\
										&=B(s^{-1}H_1,\Ad k_2[X_2,X_{2,\alpha_2}-\theta X_{2,\alpha_2}])\\
										&=B([\Ad k_2^{-1}s^{-1}H_1,X_2],X_{2,\alpha_2}-\theta X_{2,\alpha_2})\\
										&=\alpha_2(\Ad k_2^{-1}s^{-1}H_1)\\
										&=B(\Ad k_2 (\alpha_2^\vee),s^{-1}H_1)\\
										&=B(\epsilon_2 \alpha_2^\vee,s^{-1}H_1)\\
										&=\epsilon_2 s\alpha_2(H_1).
	\end{split}\]
	According to our choice of $\alpha_2$, $s\alpha_2(H_1)$ is not 0. Then Lemma \ref{lem-id} tells that $[v_{2,2}g_{2,2}](0,0;k_0)=[v_{1,2}g_{1,2}](0,0;k_0)\neq 0$.
	
	Let $S_3$ be the submanifold of $S_2$ through $(0,0;k_0)$ corresponding to the level set $g_{2,2}^{-1}(\{0\})$. As $g_{2,2}$ vanishes at $\{(b_2,c_2;k'k_0)\in S_2|k'\in \rK_{H_1}\}$, $S_3$ is an open submanifold of $\R\times \R\times \rK_{H_1}k_0$.
	
	 The flow generated by $v_{2,2}$ induces a local fibration at $(0,0;k_0)$, $\kappa_2:S_2\to S_3\times \R, s_2\to (s_3,t_2)$ and we set $\pi_2:S_2\to S_3$ accordingly. We apply the Taylor's formula to $(f_2-f_2\circ \pi_2)\circ \kappa_2^{-1}:S_3\times \R\to \R$, then have
	 \[\begin{split}
	 	&\quad [(f_2-f_2\circ\pi_2)\circ \kappa_2^{-1}](s_3,t_2)\\
	 	&=\frac{1}{2}t_2^2\int_0^1 2(1-z)\partial_{t_2}^2[(f_2-f_2\circ\pi_2)\circ \kappa_2^{-1}](s_3,t_2z)dz\\
	 	&=\frac{1}{2}\mu_{2,2}(s_3)t_2^2\int_0^1 2(1-z)[(v_{2,2}g_{2,2})\circ \kappa_2^{-1}](s_3,t_2z)dz.
	\end{split}\]
	We insert $\epsilon_2s\alpha_2(H_1+b_2H_2)$ into the right side of the equation, then introduce a local diffeomorphism at $((0,0;k_0),0)$, $S_3\times \R\to S_3\times \R, (s_3,t_2)\to (s_3,\ct_2)$ by setting
	\[\ct_2(s_3,t_2)=t_2\Big[\frac{1}{\epsilon_2 s\alpha_2(H_1+b_2 H_2)}\int_0^1 2(1-z)[(v_{2,2}g_{2,2})\circ \kappa_2^{-1}](s_3,t_2z)dz \Big]^{1/2}.\]
	Then we obtain the second local coordinate $\cx_2:S_1\to \R$ such that
	\begin{eqnarray*}
	&&f_1(b_2,c_2;k)=f_2\circ \pi_2\circ\pi_1(b_2,c_2;k)\\
	&&\quuad\quad\quad -\frac{1}{2}s\alpha_1(H)\alpha_1(H')\cx_1^2-\frac{1}{2}\epsilon_2 s\alpha_2(H)\alpha_2(H')\cx_2^2.
	\end{eqnarray*}	
	
	We construct the third local coordinate $\cx_3:S_1\to \R$. Let $v_{3,3}$ be the projection of $v_{2,3}$ to $S_3$ with respect to $v_{2,2}$. As we have pointed out that $S_3$ is an open submanifold of $\R\times \R\times \rK_{H_1}k_0$, we claim that $v_{3,3}$ coincides with the restriction of $v_{1,3}$ to $S_3$. It suffices to show that $v_3$ is tangent to $\rK_{H_1}k_0$ which is true since $v_3=\inv{(\Ad(k_s)X_3)}{R}$ and $\Ad(k_s)X_3\in \Lie(\rK_{H_1})$.
	
	Let $f_3:S_3\to \R$ be the restriction of the function $f_1:S_1\to \R$ (or $f_2:S_2\to \R$) to $S_3$. Then we have
		\[[v_{3,3}f_3](b_2,c_2;k)=[v_{1,3}f_1](b_2,c_2;k),\quad (b_2,c_2;k)\in S_3.\]
	Let $\mu_{3,3},g_{3,3}:S_3\to \R$ be the restrictions of $\mu_{1,3},g_{1,3}:S_1\to \R$ to $S_3$. We have $g_{3,3}(b_2,c_2;k_0)=0$ for
		\[g_{3,3}(b_2,c_2;k_0)=B(X_{3,\alpha_3}-\theta X_{3,\alpha_3},\Ad (k_s^{-1}k_0)H')=0.\]
	Then we show that $g_{3,3}:S_3\to \R$ is a submersion at $(0,0;k_0)$ by showing $[v_{3,3}g_{3,3}](0,0;k_0)$ not 0.

	\[\begin{split}
		&\quad[v_{3,3}g_{3,3}](0,0;k_0)\\
		&=B([\Ad k_s(X_{3,\alpha_3}-\theta X_{3,\alpha_3}),\Ad k_s X_3],\Ad k_0 H'_1)\\
		&=B(\Ad k_s(X_{3,\alpha_3}-\theta X_{3,\alpha_3}),[\Ad k_s X_3,\Ad (k_1k_s)H'_1])\\
		&=-B(X_{3,\alpha_3}-\theta X_{3,\alpha_3},[\Ad (k_s^{-1}k_1k_s)H'_1,X_3])\\
		&=-\alpha_3(\Ad (k_s^{-1}k_1k_s) H'_1)\\
		&=-\epsilon_3\alpha_3(H'_1).
	\end{split}\]
According to our choice of $\alpha_3$, $\alpha_3(H'_1)$ is not 0.

	Let $S_4$ be $\{(b_2,c_2;k_0)\in S_3\}$ which corresponds to the level set $g_{3,3}^{-1}(\{0\})$. Follow the procedure in the proof of Theorem \ref{thm-n1}. We get a local fibration at $(0,0;k_0)$, $\kappa_3:S_3\to S_4\times\R$ and $\pi_3:S_3\to S_4, (b_2,c_2;k)\to (b_2,c_2;k_0)$. It is not hard to get another local fibration at $(0,0;k_0)$, $\cka_3:S_3\to S_4\times \R, s_3\to (s_4,\ct_3)$ such that
	\[[(f_3-f_3\circ\pi_3)\circ \cka_3^{-1}](s_4,\ct_3)=-\frac{1}{2}\epsilon_3s\alpha_3(H)\alpha_3(H')\ct_3^2.\]
	Then we obtain the third local coordinate and conclude the proof as in the Theorem \ref{thm-n1}.  
\end{proof}

Before we discuss the next case, we give a heuristic remark for Theorem \ref{thm-n1} and Theorem \ref{thm-n2}. 

\begin{remark}\label{rem-ori}
Let $k_0\in \rM'$. The map $\fk\to \rK: X\to k_0 \exp X$ is a local diffeomorphism at $0\in \fk$. For $H,H'\in \fa$, we consider $f_{H,H'}(k_0\exp{X})$ and have
	\[\begin{split}
						&\quad B(H,\Ad (k_0\exp{X})H')\\
					       &=B(\Ad(k_0^{-1})H,\exp(\ad X)H')\\
					       &=\sum_{n=0}^\infty \frac{1}{n!}B(\Ad(k_0^{-1}H),(\ad X)^n H')\\
					       &=B(\Ad(k_0^{-1})H,H')+\sum_{n=2}^{\infty} \frac{1}{n!} B(\Ad (k_0^{-1})H,(\ad X)^n H'),
	\end{split}\]
	here the term for $n=1$ is dropped because $B(\Ad(k_0^{-1})H,[X,H'])=0$. Let $\alpha_1,\alpha_2,\alpha_3$ be the three positive roots arranged in any order. Let $\{X_1,X_2,X_3\}$ be an orthonormal basis of $\fk$ such that $X_i\in\fk_{\alpha_i}$ for $i=1,2,3$. Then $X$ could be written as $x_1X_1+x_2X_2+x_3X_3$, here $x_1,x_2,x_3\in \R$. With the coordinates, we have
	\[\begin{split}
	 	&\quad \sum_{n=2}^{\infty} \frac{1}{n!} B(\Ad (k_0^{-1})H,(\ad X)^n H')\\
		&=-\frac{1}{2}B([\Ad (k_0^{-1})H,X],[H',X])+\sum_{n=3}^{\infty} \frac{1}{n!} B(\Ad (k_0^{-1})H,(\ad X)^n H').\\
	\end{split}\]
Note that 
	\[-\frac{1}{2}B([\Ad (k_0^{-1})H,X],[H',X])=-\frac{1}{2}\sum_{i=1}^3 \alpha_i(\Ad k_0^{-1}H)\alpha_i(H')x_i^2.\]	
Hence Theorem \ref{thm-n1} and Theorem \ref{thm-n2} can be interpreted in the following way that there exists a family of local diffeomorphisms at $0\in \fk$ depending smoothly on the parameters $H,H'$, $\fk\to \fk:X=\sum_{i=1}^3 x_iX_i\to \tilde{X}=\sum_{i=1}^3 \cx_iX_i$, such that
	 \[\sum_{n=2}^{\infty} \frac{1}{n!} B(\Ad (k_0^{-1})H,(\ad X)^n H')=\frac{1}{2}B(\Ad(k_0^{-1})H,(\ad \tilde{X})^2 H').\]	
\end{remark}
	
\begin{theorem} \label{thm-n5}
Assume $k_0$ is contained in the critical manifold $\rK_{H_1}s\rK_{H'_1}$ which has dimension 2. Let $\alpha_1,\alpha_2,\alpha_3$ be arranged as in Theorem \ref{thm-n2}.

 If $k_0\in \rK_{H_1}s\rK_{H'_1}\backslash (\rM'_{H_1}s\rK_{H'_1}\cup \rK_{H_1}s\rM'_{H'_1})$, then there exists a local coordinate system at $(0,0;k_0)$, $\kappa:S_1\to \R^5$ which preserves the parameters. $\kappa$ satisfies
	\[\begin{split}
		f_1\circ \kappa^{-1}(b_2,c_2;\cx_1,\cx_2,\cx_3)&=f_1\circ\kappa^{-1}(b_2,c_2;0,0,0)-\frac{1}{2}s\alpha_1(H)\alpha_1(H')\cx_1^2\\
		&-s\alpha_2(H)\alpha_2(H')\cx_2-s\alpha_3(H)\alpha_3(H')\cx_3,
	\end{split}\]
here $H=H_1+b_2H_2,H'=H'_1+c_2H'_2$.
\end{theorem}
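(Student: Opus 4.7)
The plan is to follow the template of Theorem \ref{thm-n2} for constructing $\cx_1$ but then switch to the style of Corollary \ref{cor-n4} for $\cx_2$ and $\cx_3$, since now neither $k_1\in \rM'_{H_1}$ nor $k_2\in \rM'_{H'_1}$ in any decomposition $k_0=k_1k_sk_2$ with $k_1\in\rK_{H_1}$, $k_2\in \rK_{H'_1}$. The transition from quadratic to linear normal forms in the last two coordinates comes from the same mechanism observed in Corollary \ref{cor-n4}: the relevant auxiliary function $g$ fails to vanish at $k_0$, so Taylor's formula picks up a first-order contribution which, combined with the singular factor in $\mu$, produces a linear normal form.

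First I take the same orthonormal basis $\{X_1,X_2,X_3\}$ of $\fk$ and the same three vector fields $v_1=\inv{(\Ad(k_2^{-1})X_1)}{L}$, $v_2=\inv{X_2}{L}$, $v_3=\inv{(\Ad(k_s)X_3)}{R}$ as in Theorem \ref{thm-n2}, with canonical extensions $v_{1,1},v_{1,2},v_{1,3}$ to $S_1$. The first coordinate $\cx_1$ is constructed verbatim as in Theorem \ref{thm-n2}: one shows $g_{1,1}$ vanishes on $\rK_{H_1}k_0$ via Lemma \ref{lem-v1} (this argument never used $k_1\in\rM'_{H_1}$ or $k_2\in \rM'_{H'_1}$) and computes $[v_{1,1}g_{1,1}](0,0;k_0)=s\alpha_1(H_1)\neq 0$, yielding the quadratic term $-\tfrac{1}{2}s\alpha_1(H)\alpha_1(H')\cx_1^2$ and a codimension-one submanifold $S_2\subseteq S_1$ locally containing the critical manifold.

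For the second coordinate I compute $g_{1,2}$ at $k_0$:
\[
g_{1,2}(0,0;k_0)=B(s^{-1}H_1,\Ad(k_2)(X_{2,\alpha_2}-\theta X_{2,\alpha_2})).
\]
Decompose $s^{-1}H_1=aH'_1+b\alpha_2^\vee$ in the orthogonal basis $\{H'_1,\alpha_2^\vee\}$ of $\fa$; the hypothesis that the critical manifold has dimension $2$ forces $b\neq 0$, while $B(H'_1,\Ad(k_2)(X_{2,\alpha_2}-\theta X_{2,\alpha_2}))=0$ because $\Ad(k_2^{-1})H'_1=H'_1\in\fa\perp \fq$. Hence $g_{1,2}(0,0;k_0)=b\,B(\alpha_2^\vee,\Ad(k_2)(X_{2,\alpha_2}-\theta X_{2,\alpha_2}))$, which is nonzero by Lemma \ref{lem-v2} precisely because $k_2\notin \rM'_{H'_1}$. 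After restriction to $S_2$, the function $g_{2,2}=\res{g_{1,2}}{S_2}$ still satisfies $g_{2,2}(0,0;k_0)\neq 0$. Since $\mu_{1,2}=-c_2\alpha_2(H'_2)$ (using $\alpha_2(H'_1)=0$) and $s\alpha_2(H_1)\neq 0$, Taylor's formula in the flow parameter $t_2$ of $v_{2,2}$ yields
\[
[(f_2-f_2\circ\pi_2)\circ\kappa_2^{-1}](s_3,t_2)=-c_2\alpha_2(H'_2)\,t_2\int_0^1[g_{2,2}\circ\kappa_2^{-1}](s_3,t_2z)\,dz,
\]
and defining $\cx_2$ by dividing the $t_2$-factor through $s\alpha_2(H_1+b_2H_2)$ (which is admissible near $b_2=0$) produces the required linear term $-s\alpha_2(H)\alpha_2(H')\cx_2$.

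The third coordinate is constructed by the symmetric argument on the other side. Using $v_{1,3}=\inv{(\Ad(k_s)X_3)}{R}$ one writes $[v_{1,3}f_1]=\mu_{1,3}\,g_{1,3}$ with $\mu_{1,3}=b_2\,s\alpha_3(H_2)$ (since $s\alpha_3(H_1)=0$) and $g_{1,3}(b_2,c_2;k)=B(\Ad(k_s)(X_{3,\alpha_3}-\theta X_{3,\alpha_3}),\Ad(k)(H'_1+c_2H'_2))$. At $k_0$ one finds $g_{1,3}(0,0;k_0)=B(X_{3,\alpha_3}-\theta X_{3,\alpha_3},\Ad(k_2^{-1})H'_1)$; decomposing $H'_1=a'\cdot s^{-1}H_1+b'\alpha_3^\vee$ (again $b'\neq 0$ by the 2-dimensional assumption) and applying Lemma \ref{lem-v2} with $k=k_s^{-1}k_1^{-1}k_s\in \rK_{s^{-1}H_1}\setminus \rM'_{s^{-1}H_1}$ gives $g_{3,3}(0,0;k_0)\neq 0$. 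Since $\alpha_3(H'_1)\neq 0$, a Taylor expansion in $t_3$ combined with the insertion of $\alpha_3(H'_1+c_2H'_2)$ in the denominator delivers $\cx_3$ and the term $-s\alpha_3(H)\alpha_3(H')\cx_3$.

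The main obstacle is the verification of the two nonvanishing statements $g_{2,2}(0,0;k_0)\neq 0$ and $g_{3,3}(0,0;k_0)\neq 0$. Both rely on decomposing a singular element of $\fa$ ($s^{-1}H_1$ in the first case, $H'_1$ in the second) with respect to an auxiliary orthogonal basis and invoking Lemma \ref{lem-v2}; the key conceptual input is that the assumption $k_0\notin \rM'_{H_1}s\rK_{H'_1}\cup \rK_{H_1}s\rM'_{H'_1}$ converts the vanishing phenomenon of Theorem \ref{thm-n2} into the non-vanishing phenomenon of Corollary \ref{cor-n4}, simultaneously on both sides. Once these are established, the remainder of the construction—building the local fibrations, taking projections, adding the trivial parameter coordinates at the end—runs in parallel to the earlier proofs.
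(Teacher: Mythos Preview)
Your identification of the two key nonvanishing facts $g_{2,2}(0,0;k_0)\neq 0$ and $g_{2,3}(0,0;k_0)\neq 0$ via Lemma \ref{lem-v2} is exactly right and matches the paper's argument (up to a typo: your displayed formula for $g_{1,3}(0,0;k_0)$ should read $B(X_{3,\alpha_3}-\theta X_{3,\alpha_3},\Ad(k_s^{-1}k_1k_s)H'_1)$, not $\Ad(k_2^{-1})H'_1$; your subsequent application of Lemma \ref{lem-v2} with $k=k_s^{-1}k_1^{-1}k_s$ is consistent with the correct formula).

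There is, however, a genuine gap in the assembly of $\cx_2$ and $\cx_3$. You propose to build them sequentially ``in parallel to the earlier proofs,'' but in all the earlier proofs the intermediate submanifold $S_3$ was the zero set of $g_{2,2}$, which here is unavailable since $g_{2,2}(0,0;k_0)\neq 0$. If you pick an arbitrary section $S_3$ transverse to $v_{2,2}$, then the projected field $v_{3,3}=v_{2,3}-a\,v_{2,2}$ satisfies
\[
v_{3,3}f_3=\mu_{2,3}g_{2,3}-a\,\mu_{2,2}g_{2,2}
=b_2\,s\alpha_3(H_2)g_{2,3}+a\,c_2\,\alpha_2(H'_2)g_{2,2}
\]
on $S_3$; the second term carries $c_2$ rather than $b_2$, so you cannot factor out $s\alpha_3(H)$ and the Corollary \ref{cor-n4} mechanism breaks down. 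The paper resolves this by constructing $\cx_2,\cx_3$ \emph{simultaneously}: it sets $S_4=\{\pi_1(b_2,c_2;k_0)\}$, uses the composed flow map $(s_4,t_3,t_2)\mapsto \eta_2(\eta_3(s_4,t_3),t_2)$ as a local diffeomorphism $S_4\times\R^2\to S_2$, and telescopes $f_2$ so that each difference involves only one flow and hence only one factor $\mu_{2,i}$. Equivalently, your sequential scheme can be salvaged by choosing $S_3$ so that $v_{2,3}$ is tangent to it (e.g.\ $S_3=\R\times\R\times \rK_{H_1}k_0$ in your setup, where $v_{1,3}$ is already tangent and coincides with $v_{3,3}$); but this choice must be made explicit, and without it the claim that ``the remainder runs in parallel to the earlier proofs'' is not justified.
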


\begin{proof}
According to the assumption  \[k_0=k_1k_sk_2\in \rK_{H_1}s\rK_{H'_1}\backslash (\rM'_{H_1}s\rK_{H'_1}\cup \rK_{H_1}s\rM'_{H'_1}),\] 
we have $k_1\notin \rM'_{H_1}$ and $k_2\notin \rM'_{H'_1}$.

Let $\{X_1,X_2,X_3\}$ be an orthonormal basis of $\fk$ such that $X_i\in \fk_{\alpha_i}$ for $i=1,2,3$. We have $[X_2,H'_1]=0$ and $[\Ad (k_s)X_3,H_1]=0$. Let $v_1$ be any vector field on $\rK$ such that $v_1(k_0)=\Ad(k_2^{-1})X_1\in \fk=\tg{\rK}{k_0}$. Let $v_2$ be the left invariant vector field $\inv{X_2}{L}$ and let $v_3$ be the right invariant vector field $\inv{(\Ad (k_s)X_3)}{R}$. Let $v_{1,1},v_{1,2},v_{1,3}$ be the canonical extensions of $v_1,v_2,v_3$ to $S_1=\R\times \R\times \rK$.

The construction of the first local coordinate $\cx_1:S_1\to \R$ resembles the one in the proof of Theorem \ref{thm-n1}. 

Set $g_{1,1}=v_{1,1}f_1:S_1\to \R$. We claim that $g_{1,1}$ vanishes at $(0,0;k_0)\in S_1$ and $g_{1,1}$ is a submersion at $(0,0;k_0)$. The first claim is obvious since $k_0$ is a critical point of $f_{H_1,H'_1}$. 
	The second claim is true for $[v_{1,1}g_{1,1}](0,0;k_0)\neq 0$. We have $v_{1,1}g_{1,1}=v_{1,1}^2f_1:S_1\to \R$. As $v_{1,1}$ is a vertical vector field on $S_1$, we have 
		\[[v_{1,1}^2f_1](0,0;k_0)=[v_{1}^2f_{H_1,H'_1}](k_0).\]
	Although the explicit expression of $v_1$ is not available, the fact that $k_0$ is a critical point of $f_{H_1,H'_1}$ allows us to replace $v_1$ with the left invariant vector field $\inv{[v_1(k_0)]}{L}$ when evaluating this second order derivative at $k_0$,
		\[\begin{split}
			[v_{1}^2f_{H_1,H'_1}](k_0)&=[\inv{(\Ad k_2^{-1}X_1)}{L}\inv{(\Ad k_2^{-1}X_1)}{L}f_{H_1,H'_1}](k_0)\\
			&=B(H_1,\Ad k_0[\Ad k_2^{-1}X_1,[\Ad k_2^{-1} X_1,H'_1]])\\
			&=B(s^{-1}H_1,[X_1,[X_1,H'_1]])\\
			&=-s\alpha_1(H_1)\alpha_1(H'_1).
		\end{split}\]
	According to the choice of $\alpha_1$, $s\alpha_1(H_1)\alpha_1(H'_1)\neq 0$.
	
	We follow the routine listed in the proof of Theorem \ref{thm-n1}. We take the submanifold $S_2$ through $(0,0;k_0)$ corresponding to the level set $g_{1,1}^{-1}(\{0\})$. We have that $v_{1,1}$ is transverse to $S_2$ at $(0,0;k_0)$. The flow generated by $v_{1,1}$ induces a local fibration at $(0,0;k_0)$, $\kappa_1:S_1\to S_2\times \R$ and we define $\pi_1:S_1\to S_2$ accordingly. The Taylor's formula gives
	\begin{eqnarray*}
	&&\left[(f_1-f_1\circ \pi_1)\circ \kappa_1^{-1}\right](s_2,t_1)\\
	&&\quuad=\frac{1}{2}t_1^2\int_0^1 2(1-z)[(v_{1,1}g_{1,1})\circ \kappa_1^{-1}](s_2,t_1z)dz.
	\end{eqnarray*}
	We set
		\[\ct_1(s_2,t_1)=t_1\Big[\frac{-1}{s\alpha_1(H)\alpha_1(H'_1)}\int_0^1 2(1-z)[(v_{1,1}g_{1,1})\circ \kappa_1^{-1}](s_2,t_1z)dz \Big]^{1/2},\]
	then get a local fibration $\cka_1:S_1\to S_2\times \R, s_1\to (s_2,\ct_1)$ such that
		\[f_1(b_2,c_2;k)=f_1\circ \pi_1(b_2,c_2;k)-\frac{1}{2}s\alpha_1(H)\alpha_1(H')\ct_1^2.\]
		
	Next we construct the second and third local coordinate $\cx_2,\cx_3:S_1\to \R$ simultaneously.
	
	Let $v_{2,2},v_{2,3}$ be the projections of $v_{1,2},v_{1,3}$ to $S_2$ with respect to $v_{1,1}$. They are both vertical vector fields on $S_2$.
	
	We consider the functions $v_{1,2}f_1,v_{1,3}f_1:S_1\to \R$, for $s_1\in S_1$
	\[[v_{1,2}f_1](s_1)=\mu_{1,2}(s_1)g_{1,2}(s_1),\]
	\[[v_{1,3}f_1](s_1)=\mu_{1,3}(s_1)g_{1,3}(s_1),\]
	here $\mu_{1,2},\mu_{1,3},g_{1,2},g_{1,3}:S_1\to \R$ have exactly same expressions \eqref{eq:4-1},\eqref{eq:4-2},\eqref{eq:4-3} and \eqref{eq:4-4} in Theorem \ref{thm-n2}.
	
	Let $f_2:S_2\to \R$ be the restriction of $f_1:S_1\to \R$ to $S_2$. We also restrict $\mu_{1,2},\mu_{1,3},g_{1,2},g_{1,3}:S_1\to \R$ to $S_2$ and denote these restrictions by $\mu_{2,2},\mu_{2,3},g_{2,2},g_{2,3}:S_2\to \R$. Since $v_{1,1}f_1:S_1\to \R$ vanishes on $S_2$, Lemma \ref{lem-id} gives that for $s_2\in S_2$
		\[[v_{2,2}f_2](s_2)=[v_{1,2}f_1](s_2),\quad [v_{2,3}f_2](s_2)=[v_{1,3}f_1](s_2).\]
We also have that for $s_2\in S_2$
	\[[v_{2,2}f_2](s_2)=\mu_{2,2}(s_2)g_{2,2}(s_2),\quad [v_{2,3}f_2](s_2)=\mu_{2,3}(s_2)g_{2,3}(s_2).\]
	
	We want to show that
		\[g_{2,2}(0,0;k_0)\neq 0, \quad g_{2,3}(0,0;k_0)\neq 0.\]
	Evaluate	
	 \[\begin{split}
	 g_{2,2}(0,0;k_0)&=B(H_1,\Ad k_0(X_{2,\alpha_2}-\theta X_{2,\alpha_2}))\\
	 &=B(s^{-1}H_1,\Ad k_2(X_{2,\alpha_2}-\theta X_{2,\alpha_2})).
	 \end{split}\]
	 Since
	  \[
	 	s^{-1}H_1-\alpha_2(s^{-1}H_1)\alpha_2^{\vee}/\len{\alpha_2^\vee}^2\in \ker \alpha_2=\R H'_1,\]
	 	\[B(H'_1,\Ad k_2(X_{2,\alpha_2}-\theta X_{2,\alpha_2}))=0,\] we have
	 	\[ g_{2,2}(0,0;k_0)=\frac{\alpha_2(s^{-1}H_1)}{\len{\alpha_2^\vee}^2}B(\alpha_2^\vee,\Ad k_2(X_{2,\alpha_2}-\theta X_{2,\alpha_2})).\]
	 As $k_2\notin \rM'_{H'_1}$ and $\alpha_2(H_1') = 0$, Lemma \ref{lem-v2} tells that $g_{2,2}(0,0;k_0)$ is not 0.

	 $g_{2,3}(0,0;k_0)$ is not 0 for the similar reason. 
	 \[\begin{split}
	 	g_{2,3}(0,0;k_0)&=B(\Ad (k_0^{-1}k_s) (X_{3,\alpha_3}-\theta X_{3,\alpha_3}),H'_1)\\
	 						&=B(\Ad (k_s^{-1}k_1^{-1}k_s)(X_{3,\alpha_3}-\theta X_{3,\alpha_3}),H'_1)\\
	 						&=\frac{\alpha_3(H'_1)}{\len{\alpha_3^\vee}^2}B(\alpha_3^\vee,\Ad (k_s^{-1}k_1^{-1}k_s)(X_{3,\alpha_3}-\theta X_{3,\alpha_3}))
	\end{split},\]
	here $k_s^{-1}k_1^{-1}k_s\in k_s^{-1}\rK_{H_1}k_s=\rK_{s^{-1}H_1}$ and $\alpha_3(H'_1)\neq 0$. Since $k_1\notin \rM'_{H'_1}$, $ k_s^{-1}k_1k_s\notin \rM'_{s^{-1}H_1}$. Lemma \ref{lem-v1} tells that $g_{2,3}(0,0;k_0)\neq 0$.
	
	Let $\eta_2,\eta_3:S_2\times \R \to S_2$ be the flows (or local 1-parameter groups) of $v_{2,2},v_{2,3}$. The subset $S_4 = \{\pi_1(b_2,c_2;k_0)\in S_2|(b_2,c_2;k_0)\in S_1\}$  could be viewed as a 2-dimension submanifold of $S_2$. We would like to construct a local fibration at $(0,0;k_0)$, $S_2\to S_4\times \R^2$ with the aid of $\eta_2,\eta_3,S_4$.
	
	We consider the map 
		\[S_4\times \R\times \R\to S_2,\quad (s_4,t_3,t_2)\to \eta_2(\eta_3(s_4,t_3),t_2).\]
	We show that it is a local diffeomorphism at $((0,0;k_0),0,0)$. We only need to show that $\{v_{2,2},v_{2,3}\}$ is transverse to $S_4$ at $s_1=(0,0;k_0)$. First, we have
		\[\tg{S_1}{s_1}=\R\frac{\partial}{\partial b_2}(s_1)+\R \frac{\partial }{\partial c_2}(s_1)+\R v_{1,1}(s_1)+\R v_{1,2}(s_1)+\R v_{1,3}(s_1).\]
		Take the projection $\pr{\cdot}{S_1}{S_2}:\tg{S_1}{s_1}\to \tg{S_2}{s_1}$ with respect to $v_{1,1}(s_1)$, then we have 
		\[\tg{S_2}{s_1}=\R\pr{\frac{\partial}{\partial b_2}(s_1)}{S_1}{S_2}+\R \pr{\frac{\partial }{\partial c_2}(s_1)}{S_1}{S_2}+0+\R v_{2,2}(s_1)+\R v_{2,3}(s_1).\]
		According to the definition of $S_4$, we have
			\[\tg{S_4}{s_1}=\R d\pi_1\left(\frac{\partial}{\partial b_2}(s_1)\right)+\R d\pi_1\left( \frac{\partial}{\partial c_2}(s_1)\right).\]
		By Lemma \ref{lem-equi}, we have
			\[\tg{S_2}{s_1}=\tg{S_4}{s_1}+\R v_{2,2}(s_1)+\R v_{2,3}(s_1),\]
		that is, $\{v_{2,2},v_{2,3}\}$ is transverse to $S_4$ at $(0,0;k_0)$.
		
		For $(s_4,t_3,t_2)\in S_4\times \R\times \R$, we have
			\[\begin{split}
				f_2(\eta_2(\eta_3(s_4,t_3),t_2))&=\Big [[f_2\circ \eta_2](\eta_3(s_4,t_3),t_2)-[f_2\circ \eta_2](\eta_3(s_4,t_3),0)\Big]\\
				&+\Big[[f_2\circ\eta_3](s_4,t_3)-[f_2\circ \eta_3](s_4,0)\Big]+f_2(s_4)
			\end{split}.\]
		The term in the first bracket is equal to
		\begin{eqnarray*}
		&&t_2\int_0^1 \partial_{t_2}[f_2\circ \eta_2](\eta_3(s_4,t_3),t_2z)dz\\
		&&\quuad\quad\quad =\mu_{2,2}(s_4)t_2\int_0^1[g_{2,2}\circ \eta_2](\eta_3(s_4,t_3),t_2z)dz.
		\end{eqnarray*}
		The term in the second bracket is equal to
		\[ t_3\int_0^1 \partial_{t_3}[f_2\circ \eta_3](s_4,t_3z)dz=\mu_{2,3}(s_4)t_3\int_0^1[g_{2,3}\circ \eta_3](s_4,t_3z)dz.\]
		Here the factors $\mu_{2,2},\mu_{2,3}$ are moved outside the integrals as in the proof of Theorem \ref{thm-n1}.
		
		We introduce a local diffeomorphism at $((0,0;k_0),0,0)$, $S_4\times \R\times \R\to S_4\times \R\times \R, (s_4,t_3,t_2)\to (s_4,\ct_3,\ct_2)$ by putting
		\[\ct_2(s_4,t_3,t_2)=t_2\cdot \frac{1}{s\alpha_2(H)}\int_0^1[g_{2,2}\circ \eta_2](\eta_3(s_4,t_3),t_2z)dz,\]
		\[\ct_3(s_4,t_3,t_2)=t_3\cdot \frac{-1}{\alpha_3(H')}\int_0^1[g_{2,3}\circ \eta_3](s_4,t_3z)dz.\]
		
	Take the composition of the inverse of the map $(s_4,t_3,t_2)\to \eta_2(\eta_3(s_4,t_3),t_2)$ and the map above, we have a local fibration $\cka_2:S_2\to S_4\times \R\times \R, s_2\to (s_4,\ct_3,\ct_2)$ satisfying 
		\[f_2\circ \cka_2^{-1}(s_4,t_3,t_2)=f_2(s_4)-s\alpha_2(H)\alpha_2(H')\ct_2-s\alpha_3(H)\alpha_3(H')\ct_3.\]

		We take the local composition to get the second and third local coordinate
		\[S_1\xrightarrow{\cka_1}S_2\times \R \xrightarrow{(\cka_2,\id)}S_4\times \R^2\times \R,\]
		we have
			\[\begin{split}f_1(b_2,c_2;k)&=f_1(\pi_1(b_2,c_2;k_0))-\frac{1}{2}s\alpha_1(H)\alpha_1(H')\cx_1^2\\
		&-s\alpha_2(H)\alpha_2(H')\cx_2-s\alpha_3(H)\alpha_3(H')\cx_3.
			\end{split}\]
			
	As $\R^2\to S_4: (b_2,c_2)\to \pi_1(b_2,c_2;k_0)$ is a local diffeomorphism at $(0,0)$, we conclude the proof by including the two parameters $b_2,c_2$ as the local coordinates.
\end{proof}

\begin{corollary}
Assume $k_0$ is contained in the critical manifold $\rK_{H_1}s\rK_{H'_1}$ which has dimension 2. Let the positive roots $\alpha_1,\alpha_2,\alpha_3$ be arranged as in Theorem \ref{thm-n2}. 

If $k_0\in \rK_{H_1}s\rM'_{H'_1}\backslash \rM'_{H_1}s\rK_{H'_1}$, then there exists a local coordinate system at $(0,0;k_0)$, $\kappa:S_1\to \R^5$ which preserves the parameters. $\kappa$ satisfies 
	\[\begin{split}
	 	f_1\circ \kappa^{-1}(b_2,c_2;\cx_1,\cx_2,\cx_3)&=f_1(b_2,c_2;k_0)-\frac{1}{2}s\alpha_1(H)\alpha_1(H')\cx_1^2\\
											&-\frac{1}{2}\epsilon\cdot s\alpha_2(H)\alpha_2(H')\cx_2^2-s\alpha_3(H)\alpha_3(H')\cx_3,
	\end{split}\]
	here $H=H_1+b_2H_2,H'=H'_1+c_2H'_2$, $\epsilon$ is a constant $\pm 1$ which is determined explicitly. Besides, $\kappa$ always maps $(b_2, c_2; k_0)$ to $(b_2, c_2; 0, 0, 0)$.
\end{corollary}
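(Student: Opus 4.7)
The plan is to treat this case as a hybrid of Theorem \ref{thm-n2} and Theorem \ref{thm-n5}. Write $k_0 = k_1 k_s k_2$ with $k_1 \in \rK_{H_1}$ and $k_2 \in \rM'_{H'_1}$, which is guaranteed by $k_0 \in \rK_{H_1} s \rM'_{H'_1}$; the exclusion $k_0 \notin \rM'_{H_1} s \rK_{H'_1}$ then forces $k_1 \notin \rM'_{H_1}$ (otherwise $k_0 \in \rM'_{H_1} s \rM'_{H'_1} \subseteq \rM'_{H_1} s \rK_{H'_1}$). I would set $\epsilon \in \{\pm 1\}$ by $\Ad(k_2)\alpha_2^\vee = \epsilon \alpha_2^\vee$, pick an orthonormal basis $\{X_1, X_2, X_3\}$ of $\fk$ with $X_i \in \fk_{\alpha_i}$, and take $v_1$ to be any vector field with $v_1(k_0) = \Ad(k_2^{-1}) X_1$, together with $v_2 = \inv{X_2}{L}$ and $v_3 = \inv{(\Ad(k_s) X_3)}{R}$, with canonical extensions $v_{1,1}, v_{1,2}, v_{1,3}$ to $S_1$. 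The heuristic is that $\alpha_1$ yields a quadratic term, $\alpha_2$ yields a quadratic term exactly as in Theorem \ref{thm-n2} (because $k_2 \in \rM'_{H'_1}$), and $\alpha_3$ yields only a linear term exactly as in Theorem \ref{thm-n5} (because $k_1 \notin \rM'_{H_1}$).

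For the first coordinate, I would repeat verbatim the opening of Theorem \ref{thm-n5}: $g_{1,1} := v_{1,1} f_1$ vanishes at $(0,0;k_0)$ since it is a critical point of $f_{H_1,H'_1}$, and
\[[v_{1,1}^2 f_1](0,0;k_0) = -s\alpha_1(H_1)\alpha_1(H'_1) \ne 0\]
by substituting the left-invariant $\inv{(\Ad(k_2^{-1}) X_1)}{L}$ at the critical point. The Morse reduction along the flow of $v_{1,1}$ then yields $S_2$, a local fibration $\cka_1 : S_1 \to S_2 \times \R$, and the term $-\tfrac{1}{2} s\alpha_1(H)\alpha_1(H') \cx_1^2$. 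For the second coordinate, I would follow the second stage of Theorem \ref{thm-n2}: factor $v_{1,2} f_1 = \mu_{1,2} g_{1,2}$ using \eqref{eq:4-1}--\eqref{eq:4-2}, restrict to $S_2$, and observe that $k_2 \in \rM'_{H'_1}$ implies $\Ad(k_2)(X_{2,\alpha_2} - \theta X_{2,\alpha_2}) = \pm(X_{2,\alpha_2} - \theta X_{2,\alpha_2}) \in \fp_{\alpha_2} \subseteq \fq \ominus \fp_{\alpha_3}$, so Lemma \ref{lem-v1} yields $g_{2,2} \equiv 0$ on the 3-dimensional locus $\{(b_2, c_2; k' k_0) \in S_2 : k' \in \rK_{H_1}\}$. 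The submersion property follows from the same calculation as in Theorem \ref{thm-n2}, namely $[v_{1,2} g_{1,2}](0,0;k_0) = \epsilon \cdot s\alpha_2(H_1) \ne 0$, and the Morse reduction along the flow of $v_{2,2}$ yields the second quadratic term $-\tfrac{1}{2} \epsilon \cdot s\alpha_2(H)\alpha_2(H') \cx_2^2$ and a submanifold $S_3$ locally equal to $\R \times \R \times \rK_{H_1} k_0$.

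For the third coordinate the analysis switches to the linear regime. I would factor $v_{1,3} f_1 = \mu_{1,3} g_{1,3}$ as in \eqref{eq:4-3}--\eqref{eq:4-4}; the projection $v_{3,3}$ of $v_{2,3}$ to $S_3$ coincides with the restriction of $v_{1,3}$ since $v_3$ is tangent to $\rK_{H_1} k_0$. The decisive calculation is
\[g_{3,3}(0,0;k_0) = \frac{\alpha_3(H'_1)}{\len{\alpha_3^\vee}^2} B\bigl(\alpha_3^\vee, \Ad(k_s^{-1} k_1^{-1} k_s)(X_{3,\alpha_3} - \theta X_{3,\alpha_3})\bigr),\]
where $k_s^{-1} k_1^{-1} k_s \in \rK_{s^{-1} H_1} \setminus \rM'_{s^{-1} H_1}$ because $k_1 \notin \rM'_{H_1}$. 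Applying Lemma \ref{lem-v2} inside the rank-one subgroup attached to $\alpha_3$ then gives $g_{3,3}(0,0;k_0) \ne 0$, the opposite of the vanishing that occurs in Theorem \ref{thm-n2}. Consequently the Taylor expansion of $(f_3 - f_3 \circ \pi_3) \circ \kappa_3^{-1}$ in $t_3$ is first order, and the single rescaling $\ct_3 = -\tfrac{t_3}{\alpha_3(H')} \int_0^1 [g_{3,3} \circ \kappa_3^{-1}](s_4, t_3 z) \, dz$, exactly as in the third-coordinate construction of Theorem \ref{thm-n5}, produces the linear term $-s\alpha_3(H)\alpha_3(H') \cx_3$. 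Appending the two parameters $(b_2, c_2)$, which serve as local coordinates on the residual submanifold $S_4 \subseteq \R \times \R \times \{k_0\}$, then completes the coordinate system.

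The main obstacle, as in Theorems \ref{thm-n2} and \ref{thm-n5}, is the second-coordinate stage: I must verify that $g_{2,2}$ vanishes not merely at $(0,0;k_0)$ but on the entire 3-dimensional locus $\R \times \R \times \rK_{H_1} k_0 \cap S_2$, because this is what keeps $S_3$ large enough for the subsequent $\alpha_3$-reduction to leave a residual $S_4$ on which the two parameters $(b_2, c_2)$ give transverse coordinates. This global-in-$k'$ vanishing, which relies on $k_2 \in \rM'_{H'_1}$ together with Lemma \ref{lem-v1}, is the delicate point distinguishing the intermediate case from the generic picture of Theorem \ref{thm-n5}.
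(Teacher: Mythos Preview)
Your hybrid strategy is sound and matches the paper's, but there is an inconsistency in how you set up the first coordinate. You take $v_1$ to be \emph{any} vector field with $v_1(k_0)=\Ad(k_2^{-1})X_1$, following Theorem~\ref{thm-n5}. With that choice $S_2=(v_{1,1}f_1)^{-1}(0)$ is only guaranteed to contain the two–dimensional sheet $\{(b_2,c_2;k_0)\}$; for $k'\in\rK_{H_1}\setminus\{e\}$ the value $v_1(k'k_0)$ is uncontrolled, so there is no reason for $(b_2,c_2;k'k_0)\in S_2$. Consequently your later claims that ``$S_3$ is locally $\R\times\R\times\rK_{H_1}k_0$'' and that ``$v_{3,3}$ coincides with the restriction of $v_{1,3}$ since $v_3$ is tangent to $\rK_{H_1}k_0$'' are not justified.

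The paper avoids this by running the first step as in Theorem~\ref{thm-n2} rather than Theorem~\ref{thm-n5}: it takes the specific left-invariant field $v_1=\inv{(\Ad(k_2^{-1})X_1)}{L}$, factors $v_{1,1}f_1=\mu_{1,1}g_{1,1}$, and uses Lemma~\ref{lem-v1} (with $k_s^{-1}k'k_1k_s\in\rK_{s^{-1}H_1}$ and $\alpha_3(s^{-1}H_1)=0$) to get $g_{1,1}\equiv 0$ on all of $\R\times\R\times\rK_{H_1}k_0$. Then $S_2\supseteq\R\times\R\times\rK_{H_1}k_0$ locally, your second step goes through unchanged, $S_3$ really is $\R\times\R\times\rK_{H_1}k_0$, and your third-coordinate linear reduction via Lemma~\ref{lem-v2} (which is correct and is exactly the new ingredient relative to Theorem~\ref{thm-n2}) finishes the proof. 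Alternatively, you could keep your arbitrary $v_1$ but drop the two unjustified claims and instead compute $v_{3,3}f_3$ directly from Lemma~\ref{lem-id} applied twice, which still yields $v_{3,3}f_3=\mu_{1,3}g_{1,3}$ on $S_3$; either route works, but the paper's choice is cleaner.
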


\begin{proof}
According to the assumption $k_0=k_1k_sk_2\in\rK_{H_1}s\rM'_{H'_1}\backslash \rM'_{H_1}s\rK_{H'_1}$, we have $k_1\in \rK_{H_1}\backslash \rM'_{H_1}$ and $k_2\in \rM'_{H'_1}$.

Let $\{X_1, X_2, X_3\}$ be an orthonormal basis of $\fk$ such that $X_i\in\fk_{\alpha_i}$, for $i=1,2,3$. Let $v_1=\inv{(\Ad(k_2^{-1})X_1)}{L},v_2=\inv{X_2}{L},v_3=\inv{(\Ad(k_s)X_3)}{R}$. Let $v_{1,1},v_{1,2},v_{1,3}$ be their canonical extensions to $S_1$.

Consider $v_{1,1}f_1,v_{1,2}f_1:S_1\to \R$,
\[[v_{1,1}f_1](s_1)=\mu_{1,1}(s_1)g_{1,1}(s_1), \quad [v_{1,2}f_1](s_1)=\mu_{1,2}(s_1)g_{1,2}(s_1).\]
The formulas for $\mu_{1,1},g_{1,1},\mu_{1,2},g_{1,2}:S_1\to \R$ are given by equation \eqref{eq:4-1}, \eqref{eq:4-2}, \eqref{eq:4-3} and \eqref{eq:4-4}. We know $g_{1,1},g_{1,2}$ vanishes at $\R\times\R\times \rK_{H_1}k_0$ for the same reason in the proof of Theorem \ref{thm-n2}. With slight modifications of the proof of Theorem \ref{thm-n2} and Corollary \ref{cor-n4}, we establish this corollary.
\end{proof}

\begin{corollary}
Assume $k_0$ is contained in the critical manifold $\rK_{H_1}s\rK_{H'_1}$ which has dimension 2. Let the positive roots $\alpha_1,\alpha_2,\alpha_3$ be arranged in the way such that $s\alpha_2(H_1)=\alpha_3(H'_1)=0$. 

If $k_0\in \rM'_{H_1}s\rK_{H'_1}\backslash \rK_{H_1}s\rM'_{H'_1}$, then there exists a local coordinate system at $(0,0;k_0)$, $\kappa:S_1\to \R^5$ which preserves the parameters. $\kappa$ satisfies 
	\[\begin{split}
	 	f_1\circ \kappa^{-1}(b_2,c_2;\cx_1,\cx_2,\cx_3)&=f_1(b_2,c_2;k_0)-\frac{1}{2}s\alpha_1(H)\alpha_1(H')\cx_1^2\\
											&-\frac{1}{2}\epsilon\cdot s\alpha_2(H)\alpha_2(H')\cx_2^2-s\alpha_3(H)\alpha_3(H')\cx_3,
	\end{split}\]
	here $H=H_1+b_2H_2,H'=H'_1+c_2H'_2$, $\epsilon$ is a constant $\pm 1$ which is determined explicitly. Besides, $\kappa$ always maps $(b_2,c_2;k_0)$ to $(b_2,c_2;0,0,0)$.
\end{corollary}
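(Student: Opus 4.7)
The plan is to carry out the proof in complete parallel with the immediately preceding corollary, with the roles of $k_1$ and $k_2$ (and correspondingly of the singular roots $\alpha_2$ and $\alpha_3$) interchanged. In the present situation the decomposition $k_0=k_1k_sk_2$ satisfies $k_1\in \rM'_{H_1}$ and $k_2\in \rK_{H'_1}\setminus \rM'_{H'_1}$, and the root arrangement $s\alpha_2(H_1)=\alpha_3(H'_1)=0$ is the mirror of the arrangement used in Theorem \ref{thm-n2} and Theorem \ref{thm-n5}. Since $k_s^{-1}k_1^{-1}k_s\in \rM'_{s^{-1}H_1}$ and $\alpha_2$ vanishes on $s^{-1}H_1$, the scalar $\epsilon=\pm 1$ is determined by $\Ad(k_s^{-1}k_1^{-1}k_s)\alpha_2^\vee=\epsilon\,\alpha_2^\vee$.

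Choose an orthonormal basis $\{X_1,X_2,X_3\}$ of $\fk$ with $X_i\in\fk_{\alpha_i}$. Mirroring the earlier construction, let $v_1=\inv{(\Ad(k_2^{-1})X_1)}{L}$, let $v_2=\inv{(\Ad(k_s)X_2)}{R}$, and let $v_3=\inv{X_3}{L}$. The right-invariant choice for $v_2$ is dictated by the fact that now $s\alpha_2(H_1)=0$, so that the singular factor $s\alpha_2(H)=b_2\cdot s\alpha_2(H_2)$ is extracted cleanly from right-invariant differentiation of $f_1$; the left-invariant choice for $v_3$ is dictated by $\alpha_3(H'_1)=0$, which guarantees that $\alpha_3(H')=c_2\alpha_3(H'_2)$ is extracted from left-invariant differentiation. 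Let $v_{1,1},v_{1,2},v_{1,3}$ denote the canonical extensions of $v_1,v_2,v_3$ to $S_1$.

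The construction of $\cx_1$ proceeds verbatim as in Theorem \ref{thm-n2}: one writes $v_{1,1}f_1=\mu_{1,1}g_{1,1}$, invokes Lemma \ref{lem-v1} to see that $g_{1,1}$ vanishes on $\R\times\R\times \rK_{H_1}k_0$, checks $[v_{1,1}g_{1,1}](0,0;k_0)=s\alpha_1(H_1)\neq 0$, and rescales the Taylor remainder. This yields $-\tfrac12 s\alpha_1(H)\alpha_1(H')\cx_1^2$ together with a codimension-one submanifold $S_2$. For $\cx_2$, the decomposition $v_{1,2}f_1=\mu_{1,2}g_{1,2}$ now has $\mu_{1,2}(b_2,c_2;k)=b_2\,s\alpha_2(H_2)$, and the restriction $g_{2,2}$ vanishes on $\R\times\R\times k_0\rK_{H'_1}$ by the right-invariant analogue of the vanishing argument in Theorem \ref{thm-n2}, in which $k_1\in \rM'_{H_1}$ plays the role that $k_2\in \rM'_{H'_1}$ played there (Lemma \ref{lem-v1} is used on the $s^{-1}H_1$-side). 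A direct computation gives $[v_{2,2}g_{2,2}](0,0;k_0)=\epsilon\cdot s\alpha_2(H_1)$ up to a nonzero factor, and the standard Taylor–rescaling produces the quadratic contribution $-\tfrac12\epsilon\cdot s\alpha_2(H)\alpha_2(H')\cx_2^2$. Finally, for $\cx_3$, the factor $g_{3,3}$ associated with $v_{1,3}$ satisfies $g_{3,3}(0,0;k_0)\neq 0$: this value is a nonzero scalar multiple of $B(\alpha_3^\vee,\Ad(k_2)(X_{3,\alpha_3}-\theta X_{3,\alpha_3}))$, which is nonzero by Lemma \ref{lem-v2} precisely because $k_2\notin \rM'_{H'_1}$ and $\alpha_3(H'_1)=0$. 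Hence the Taylor expansion in the $t_3$-direction has genuine order-one behaviour, and the usual rescaling gives $-s\alpha_3(H)\alpha_3(H')\cx_3$. Including $b_2,c_2$ as the remaining two coordinates completes $\kappa$, which by construction sends $(b_2,c_2;k_0)$ to $(b_2,c_2;0,0,0)$.

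The main obstacle is purely bookkeeping: one must verify that every vanishing identity on the submanifolds $S_2$ and $S_3$, and every submersion check at $(0,0;k_0)$, still goes through once $v_2$ is replaced by a right-invariant field and the roots $\alpha_2,\alpha_3$ (together with the conditions $k_1\in \rM'_{H_1}$, $k_2\notin \rM'_{H'_1}$) have exchanged their roles. No new technique beyond Lemmas \ref{lem-v1} and \ref{lem-v2} and the reduction scheme of Theorems \ref{thm-n1} and \ref{thm-n2} is needed; the argument is obtained from the previous corollary by the obvious symmetry.
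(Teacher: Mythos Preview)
Your overall strategy is correct, but the mirror you carry out is incomplete: you swap left- and right-invariance for $v_2$ and $v_3$ but keep $v_1=\inv{(\Ad(k_2^{-1})X_1)}{L}$ left-invariant. This choice worked in Theorem~\ref{thm-n2} and in the preceding corollary because there $k_2\in\rM'_{H'_1}$, so $\Ad(k_2)H'_2=\pm H'_2\in\fa$ and the factorisation $v_{1,1}f_1=\mu_{1,1}g_{1,1}$ is clean. Here $k_2\notin\rM'_{H'_1}$, so $\Ad(k_2)H'_2$ has a nontrivial $\fp_{\alpha_3}$-component and $[X_1,\Ad(k_2)H']$ is no longer a scalar multiple of $X_{1,\alpha_1}-\theta X_{1,\alpha_1}$; the factorisation you invoke ``verbatim'' from Theorem~\ref{thm-n2} fails. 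More seriously, the submanifold chain does not line up: for the $\cx_3$ step you need $S_3$ to be an open piece of $\R\times\R\times k_0\rK_{H'_1}$ so that $v_3=\inv{X_3}{L}$ restricts to $v_{3,3}$; this forces both $g_{1,1}$ and $g_{1,2}$ to vanish on $k_0\rK_{H'_1}$. Your $g_{1,2}$ does vanish there, but your $g_{1,1}$ vanishes on $\rK_{H_1}k_0$ instead, and these two one-dimensional pieces of the critical manifold meet only discretely.

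The fix is exactly what the paper does: take $v_1$ right-invariant as well, namely $v_1=\inv{(\Ad(k_1k_s)X_1)}{R}$. Then, because $k_1\in\rM'_{H_1}$, one has $\Ad(k_1^{-1})H_2=\pm H_2\in\fa$ and the factorisation
\[
[v_{1,1}f_1](b_2,c_2;k)=s\alpha_1(H_1+b_2\Ad(k_1^{-1})H_2)\cdot B\bigl(X_{1,\alpha_1}-\theta X_{1,\alpha_1},\,\Ad(k_s^{-1}k_1^{-1}k)H'\bigr)
\]
is clean, with the second factor vanishing on $\R\times\R\times k_0\rK_{H'_1}$ by Lemma~\ref{lem-v1}. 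Now $S_2$ and then $S_3$ are open in $\R\times\R\times k_0\rK_{H'_1}$, $v_3=\inv{X_3}{L}$ is tangent there, and your argument for $\cx_3$ via Lemma~\ref{lem-v2} goes through. In short, the ``obvious symmetry'' with the preceding corollary swaps $\mathrm{L}\leftrightarrow\mathrm{R}$ for all three vector fields, not just two.
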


\begin{proof}
According to the assumption $k_0=k_1k_sk_2\in\rM'_{H_1}s\rK_{H'_1}\backslash \rK_{H_1}s\rM'_{H'_1}$, we have $k_1\in \rM'_{H_1}$ and $k_2\in \rK_{H'_1}\backslash\rM'_{H'_1}$. 

Let $\{X_1,X_2,X_3\}$ be an orthonormal basis of $\fk$ such that $X_i\in\fk_{\alpha_i}$, for $i=1,2,3$. Let $v_1=\inv{(\Ad(k_1k_s)X_1)}{R},v_2=\inv{(\Ad(k_s)X_2)}{R},v_3=\inv{X_3}{L}$. Let $v_{1,1},v_{1,2},v_{1,3}$ be their canonical extensions to $S_1$.

Consider $v_{1,1}f_1,v_{1,2}f_1:S_1\to \R$,
\[[v_{1,1}f_1](s_1)=\mu_{1,1}(s_1)g_{1,1}(s_1),\]
\[\mu_{1,1}(b_2,c_2;k)=s\alpha_1(H_1+b_2\Ad (k_1^{-1})H_2),\]
\[g_{1,1}(b_2,c_2;k)=B(X_{1,\alpha_1}-\theta X_{1,\alpha_1},\Ad(k_s^{-1}k_1^{-1}k)(H'_1+c_2H'_2)),\]
\[[v_{1,2}f_1](s_1)=\mu_{1,2}(s_1)g_{1,2}(s_1),\]
\[\mu_{1,2}(b_2,c_2;k)=s\alpha_2(H_1+b_2H_2),\]
\[g_{1,2}(b_2,c_2;k)=B(X_{2,\alpha_2}-\theta X_{2,\alpha_2},\Ad(k_s^{-1}k)(H'_1+c_2H'_2)).\]
We can show that $g_{1,1},g_{1,2}$ vanishes at $\R\times\R\times k_0\rK_{H'_1}$ by Lemma \ref{lem-v1}. With slight modifications of the proof of Theorem \ref{thm-n2} and Corollary \ref{cor-n4}, we establish this corollary.
\end{proof}

\subsection{Either $H_1$ or $H'_1$ is regular}
We have finished the discussions about the cases when both $H_1, H'_1$ are singular. Now we turn to the cases when either $H_1$ or $H'_1$ is regular. They more or less follow immediately from the cases when both $H_1, H'_1$ are singular.

\begin{corollary}
$H_1$ is regular and $H'_1$ is singular. Assume $k_0$ is contained in the critical manifold $s\rK_{H'_1}$, $s\in \rW$. We arrange the postive roots $\alpha_1,\alpha_2,\alpha_3$ in the way such that $\alpha_3$ is the unique positive root with $\alpha_3(H'_1)=0$.

If $k_0\in s\rK_{H'_1}\cap \rM'$, then there exists a local coordinate system at $(0,0;k_0)$, $\kappa:S_1\to \R^5$ which  satisfies

	\[
		\begin{split}
			f_1\circ \kappa^{-1}(b_2, c_2; \cx_1, \cx_2, \cx_3)&=f_1(b_2, c_2; k_0)-\frac{1}{2}s\alpha_1(H)\alpha_1(H')\cx_1^2\\
			&-\frac{1}{2}s\alpha_2(H)\alpha_2(H')\cx_2^2-\frac{1}{2}\epsilon \cdot s\alpha_3(H)\alpha_3(H')\cx_3^2,
		\end{split}
	\]
	here $\epsilon$ is a constant $\pm 1$.
	
If $k_0\in s\rK_{H'_1}\backslash \rM'$, then there exists a local coordinate system at $(0,0;k_0)$, $\kappa:S_1\to \R^5$ which satisfies
	\[\begin{split}
	f_1\circ \kappa^{-1}(b_2,c_2;\cx_1,\cx_2,\cx_3)&=f_1(b_2,c_2;k_0)-\frac{1}{2}s\alpha_1(H)\alpha_1(H')\cx_1^2\\
	&-\frac{1}{2}s\alpha_2(H)\alpha_2(H')\cx_2^2
	-s\alpha_3(H)\alpha_3(H')\cx_3.
	\end{split}\]	
	Here $H=H_1+b_2H_2, H'=H'_1+c_2H'_2$. In both cases, $\kappa$ preserves the parameters and always maps $(b_2,c_2;k_0)$ to $(b_2,c_2;0,0,0)$.
\end{corollary}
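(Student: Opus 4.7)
The corollary is a specialization of Theorem~\ref{thm-n1} and Corollary~\ref{cor-n4}: because $H_1$ is regular, $\rK_{H_1}=\rM$ and the critical manifold reduces to the one-dimensional $s\rK_{H'_1}$. The only singular factor now comes from the $H'_1$ side, and my plan is to run through the three-step construction of local coordinates using only left-invariant vector fields $v_{1,i}=\inv{X_i}{L}$ (canonically extended to $S_1$) for an orthonormal basis $\{X_1,X_2,X_3\}$ of $\fk$ with $X_i\in\fk_{\alpha_i}$. Since $\alpha_3(H'_1)=0$ is the unique vanishing value, $v_{1,1}$ and $v_{1,2}$ are transverse to $s\rK_{H'_1}$ at $k_0$ while $v_{1,3}$ is tangent to it.

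For the first two coordinates $\cx_1$ and $\cx_2$, I would copy the argument of Theorem~\ref{thm-n1}~(1a)--(2f) essentially verbatim. The factorizations $v_{1,i}f_1=\mu_{1,i}g_{1,i}$ with $\mu_{1,i}=-\alpha_i(H'_1+c_2H'_2)$ carry no singularity since $\alpha_1(H'_1),\alpha_2(H'_1)\neq 0$. Lemma~\ref{lem-v1} shows that $g_{1,1}$ and $g_{2,2}$ vanish on $\R\times\R\times s\rK_{H'_1}$, so the resulting submanifold $S_3$ is an open submanifold of $\R\times\R\times s\rK_{H'_1}$; the second-derivative computations yield $[v_{1,i}g_{1,i}](0,0;k_0)=s\alpha_i(H_1)\neq 0$, and the Morse-type rescaling produces the quadratic terms $-\frac{1}{2}s\alpha_i(H)\alpha_i(H')\cx_i^2$.

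For the third coordinate I would exploit that $v_{3,3}$, the projection of $v_{1,3}$ to $S_3$, agrees with the restriction of $v_{1,3}$ (since $v_{1,3}$ is already tangent to $s\rK_{H'_1}$). With $v_{1,3}f_1=\mu_{1,3}g_{1,3}$, the singularity is now borne by $\mu_{1,3}=-c_2\alpha_3(H'_2)$ alone (contrast the $b_2c_2$ of Theorem~\ref{thm-n1}), and $g_{1,3}(b_2,c_2;k)=B(H_1+b_2H_2,\Ad k(X_{3,\alpha_3}-\theta X_{3,\alpha_3}))$. The two sub-cases branch on whether $g_{3,3}(0,0;k_0)$ vanishes. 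If $k_0\in\rM'$, write $k_0=k_sk_1$ with $k_1\in\rM'_{H'_1}$: then $g_{3,3}(b_2,c_2;k_0)\equiv 0$ (since $\Ad k_0^{-1}(H_1+b_2H_2)\in\fa$ and $X_{3,\alpha_3}-\theta X_{3,\alpha_3}\in\fq$), while $[v_{3,3}g_{3,3}](0,0;k_0)$ reduces to a nonzero multiple of $\epsilon\cdot s\alpha_3(H_1)$, where $\epsilon=\pm 1$ records $\Ad(k_1^{-1})\alpha_3^\vee=\epsilon\alpha_3^\vee$; the square-root Morse step of Theorem~\ref{thm-n1}~(3f) then yields $-\frac{1}{2}\epsilon\cdot s\alpha_3(H)\alpha_3(H')\cx_3^2$. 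If $k_0\notin\rM'$, write $k_0=k_sk_1$ with $k_1\in\rK_{H'_1}\setminus\rM'_{H'_1}$: decomposing $s^{-1}H_1=aH'_1+b\alpha_3^\vee$ with $b=s\alpha_3(H_1)/\len{\alpha_3^\vee}^2\neq 0$, the $H'_1$-component annihilates against $X_{3,\alpha_3}-\theta X_{3,\alpha_3}\in\fq$, and Lemma~\ref{lem-v2} gives $g_{3,3}(0,0;k_0)\neq 0$; the linear Taylor expansion as in Corollary~\ref{cor-n4} then yields the linear term $-s\alpha_3(H)\alpha_3(H')\cx_3$. Including $b_2,c_2$ as the remaining coordinates finishes the proof in the same manner as at the end of Theorem~\ref{thm-n1}.

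The argument is essentially bookkeeping; there is no genuinely new difficulty. The main care required is to verify that the singular factor in $\mu_{1,3}$ reduces from $b_2c_2$ (of Theorem~\ref{thm-n1}) to $c_2$ alone, so that the Morse rescaling still yields a smooth diffeomorphism, and to check that the $\rM'$-versus-not-$\rM'$ dichotomy on $k_0$ exactly parallels the one governing Theorem~\ref{thm-n1} versus Corollary~\ref{cor-n4}.
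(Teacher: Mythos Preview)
Your proposal is correct and takes the same approach as the paper, which simply states that the proofs of Theorem~\ref{thm-n1} and Corollary~\ref{cor-n4} cover these cases. You have spelled out the adaptation explicitly and correctly: the key observation is that regularity of $H_1$ forces $s\alpha_3(H_1)\neq 0$, so in the third step one keeps $g_{1,3}(b_2,c_2;k)=B(H_1+b_2H_2,\Ad k(X_{3,\alpha_3}-\theta X_{3,\alpha_3}))$ without extracting a $b_2$ factor, and $\mu_{1,3}$ carries only the $c_2$ singularity; the two subcases then follow exactly the Morse-type rescaling of Theorem~\ref{thm-n1}(3f) and the linear Taylor step of Corollary~\ref{cor-n4}, respectively.
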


\begin{proof}
The proofs of Theorem \ref{thm-n1} and Corollary \ref{cor-n4} cover the cases here. 
\end{proof}

The situation for singular $H_1$ and regular $H'_1$ is similar. We simply replace $s\rK_{H'_1}$ by $\rK_{H_1}s$ and replace $\alpha_3(H'_1)=0$ by $s\alpha_3(H_1)=0$ in the statement above and all else remain the same.

When $H_1,H'_1$ are both regular, $f_{H_1,H'_1}$ is a Morse function of which the critical set is $\rM'$. When $|b_2|,|c_2|$ are  small, $f_{H,H'}$ remains to be a Morse function which shares the same critical set $\rM'$, here $H=H_1+b_2H_2,H'=H'_2+c_2H'_2$.

\begin{proposition}
$H_1,H'_1$ are regular. Assume $k_0$ is a critical point of $f_{H_1,H'_1}$. As $k_0\in \rM'$, set $s=k_0\rM\in \rW$. Let $\alpha_1,\alpha_2,\alpha_3$ be the positive roots arranged in any order. There exists a local coordinate system at $(0,0;k_0)$, $\kappa:S_1\to \R^5$ which preserves the parameters. $\kappa$ satisfies
	\[
		f_1\circ \kappa^{-1}(b_2,c_2;\cx_1,\cx_2,\cx_3)=f_1(b_2,c_2;k_0)-\frac{1}{2}\sum_{i=1}^3 s\alpha_i(H)\alpha_i(H')\cx_i^2,
	\]
	here $H=H_1+b_2H_2, H'=H'_1+c_2H'_2$.
	Besides, $\kappa$ always maps $(b_2,c_2;k_0)$ to $(b_2,c_2;0,0,0)$.
\end{proposition}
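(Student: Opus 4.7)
The plan is to repeat the three-step reduction from the proof of Theorem \ref{thm-n1} in the present much simpler setting, where every coefficient $s\alpha_i(H_1)$ and $\alpha_i(H'_1)$ is nonzero because $H_1, H'_1$ are regular. Consequently no singular direction arises, the choice of ordering of the positive roots is irrelevant, and no auxiliary sign $\epsilon$ appears. Essentially, this amounts to a Morse lemma with parameters, except that we specifically extract the coefficients $s\alpha_i(H)\alpha_i(H')$ so that the local normal form is consistent with the one in the earlier theorems.

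First I would fix an orthonormal basis $\{X_1, X_2, X_3\}$ of $\fk$ with $X_i \in \fk_{\alpha_i}$ and take the canonical extensions $v_{1,1}, v_{1,2}, v_{1,3}$ of the left invariant vector fields $\inv{X_i}{L}$ to $S_1$. Then I would construct $\cx_1$ exactly as in steps (1a)--(1e) of the proof of Theorem \ref{thm-n1}: factor $v_{1,1}f_1 = \mu_{1,1}\,g_{1,1}$ with $\mu_{1,1}(b_2,c_2;k) = -\alpha_1(H'_1 + c_2 H'_2)$ and $g_{1,1}(b_2,c_2;k) = B(H_1+b_2H_2, \Ad(k)(X_{1,\alpha_1}-\theta X_{1,\alpha_1}))$; observe that $g_{1,1}(b_2,c_2;k_0) = 0$ because $k_0 \in \rM'$ implies $\Ad(k_0^{-1})(H_1+b_2H_2) \in \fa \perp \fq$, and that $[v_{1,1}g_{1,1}](0,0;k_0) = s\alpha_1(H_1) \neq 0$ because $H_1$ is regular; then define $S_2$ as the local zero locus of $g_{1,1}$ and extract $\cx_1$ via the usual Taylor expansion, producing the first quadratic term $-\tfrac{1}{2} s\alpha_1(H)\alpha_1(H') \cx_1^2$.

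The second step repeats on $S_2$: let $v_{2,2}, v_{2,3}$ be the projections of $v_{1,2}, v_{1,3}$ to $S_2$ with respect to $v_{1,1}$, factor $v_{2,2}f_2 = \mu_{2,2}\,g_{2,2}$, and verify $[v_{1,1}g_{1,2}](0,0;k_0) = 0$ (since $[\Ad(k_0^{-1})H_1, X_1] \in \fp_{\alpha_1}$ is orthogonal to $\fp_{\alpha_2}$) and $[v_{1,2}g_{1,2}](0,0;k_0) = s\alpha_2(H_1) \neq 0$, so by Lemma \ref{lem-id}, $[v_{2,2}g_{2,2}](0,0;k_0) \neq 0$. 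Define $S_3$ as the local zero locus of $g_{2,2}$ and extract $\cx_2$. For the third step I would project $v_{2,3}$ to $S_3$ to obtain $v_{3,3}$ and apply the same argument once more. The key simplification compared with Theorem \ref{thm-n1} is that here $H'_1$ is regular, so $\alpha_3(H'_1) \neq 0$: the factor $\mu_{3,3}$ does not vanish at $(0,0;k_0)$ (unlike in step (3b) of Theorem \ref{thm-n1}, where $\mu_{3,3}$ carried the singularity $b_2 c_2$), so the standard Taylor argument directly delivers $-\tfrac{1}{2} s\alpha_3(H)\alpha_3(H') \cx_3^2$ with no need to manipulate signs or extract a square root from a product involving singular factors.

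There is no serious obstacle: nondegeneracy of the Hessian of $f_{H_1,H'_1}$ at $k_0$ (the classical Morse-theoretic input at a point of $\rM'$) is stable as $(b_2, c_2)$ varies in a small neighborhood of $(0,0)$, so every reduction step is uniform in the parameters. The only mild bookkeeping point is that each local fibration $\kappa_j$ must preserve the parameters $(b_2, c_2)$, which follows from the verticality of the vector fields $v_{1,i}$ exactly as in step (1d) of the proof of Theorem \ref{thm-n1}. Finally, incorporating $(b_2, c_2)$ as the remaining two coordinates via the obvious local diffeomorphism yields the desired $\kappa : S_1 \to \R^5$ with $\kappa(b_2, c_2; k_0) = (b_2, c_2; 0, 0, 0)$.
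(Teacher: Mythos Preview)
Your proposal is correct and matches the paper's own proof, which simply states that the proofs of Theorem \ref{thm-n1} and Theorem \ref{thm-n2} cover this situation (or alternatively that one may invoke the Morse lemma with parameters directly). You have spelled out explicitly how the reduction of Theorem \ref{thm-n1} simplifies when both $H_1$ and $H'_1$ are regular; the one small point worth noting is that in the third step you cannot identify $v_{3,3}$ with $\res{v_{1,3}}{S_3}$ as in (3a) of Theorem \ref{thm-n1} (since $S_3$ is no longer $\R\times\R\times \rK_{H_1}s\rK_{H'_1}$), but your phrase ``apply the same argument once more'' correctly indicates that one simply repeats the pattern of (2a)--(2f) via Lemma \ref{lem-id}, using $[v_{1,i}g_{1,3}](0,0;k_0)=0$ for $i=1,2$ and $[v_{1,3}g_{1,3}](0,0;k_0)=s\alpha_3(H_1)\neq 0$.
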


\begin{proof}
The proofs of Theorem \ref{thm-n1} and Theorem \ref{thm-n2} cover this situation. Or we can apply the Morse lemma with parameters directly, cf. Hörmander \cite[Appendix C.6]{H}.
\end{proof}

\section{Estimates for oscillatory integrals}

In this section, we first establish the estimate for the oscillatory integral \eqref{eq:ost2}, then use it to deduce the estimate for the oscillatory integral \eqref{eq:ost1}. 

\subsection{Estimate for oscillatory integral \eqref{eq:ost2}} 
\begin{theorem} \label{est-1}
We have
\[\left |\int_\rK u(k) \exp {\iu B(H,\Ad (k)H')}dk\right |\lesssim \len{u}_{C^3(\rK,\C)}\sum_{s\in \rW}\Omega(s^{-1}H,H')^{-1/2},\]
\[\quuad \forall H,H'\in \fa, u\in C^\infty(\rK,\C).\]
$\Omega$ is defined as in Theorem \ref{est-sphfun}.
\end{theorem}

We point out that the norm $\len{\cdot}_{C^3(\rK,\C)}$ is not canonical. It is defined with the aid of a fixed finite cover of local coordinate patches and a fixed finite $\smo$ partition of unity. Here the compactness of $\rK$ is used. Throughout our discussions, we assume this fixed norm. 

We now come to the proof of Theorem \ref{est-1}, the global estimate for the oscillatory integral \eqref{eq:ost2}. We make the following two reductions.

\begin{lemma} \label{lem-loc1}
For any $k_0\in \rK$, there exists an open neighborhood $U$ of $k_0$ such that for any $\chi\in \smo_c(U,\C)$, we have
\[\left|\int_\rK u(k)\chi(k) e^{\iu f_{H,H'}(k)}dk\right|\lesssim_{U,\chi} \len{u}_{C^3(\rK,\C)} \sum_{s\in \rW}\Omega(s^{-1}H,H')^{-1/2}\]
\[\quuad \forall H,H'\in \fa,u\in C^\infty(\rK,\C).\]
\end{lemma}

It is not difficult to see that the lemma above implies Theorem \ref{est-1}. For $k_0\in \rK$, let $U_{k_0}$ be the open neighborhood of $k_0$ chosen according to Lemma \ref{lem-loc1}. Then these open subsets form an open cover of $\rK$. Due to the compactness of $\rK$, this cover has a finite subcover denoted by $\{U_i\}_{i=1}^n$. Let $\{\chi_i\}_{i=1}^n$ be a $\smo$ partition of unity subordinate to the open cover $\{U_i\}_{i=1}^n$. Then we break the whole integral into several pieces 
	\[\int_\rK u(k) e^{\iu f_{H,H'}(k)}dk=\sum_{i=1}^n \int_\rK u(k)\chi_i(k) e^{\iu f_{H,H'}(k)}dk.\]
As Lemma \ref{lem-loc1} gives the estimate of each term in the sum, we obtain the estimate for the whole integral. 

The first reduction is made to localize the integral at various points, in other words, decompose the integral into integrals over sufficiently small regions. We make the second reduction to restrict the parameters $(H,H')$ to small conic subsets of $\fa\times\fa$.

\begin{lemma} \label{lem-loc2}
For any $k_0\in \rK$, $H_1,H'_1\in \fa$ with $\len{H_1}=\len{H'_1}=1$, there exists an open neighborhood $U$ of $k_0$ and $\varepsilon>0$ such that for any $\chi\in \smo_c(U,\C)$, we have
	\[\left|\int_\rK u(k)\chi(k) e^{\iu f_{H,H'}(k)}dk\right|\lesssim_{U,\varepsilon,\chi} \len{u}_{C^3} \sum_{s\in \rW}\Omega(s^{-1}H,H')^{-1/2},\quad \forall u\in C^\infty(\rK,\C),\]	
	\[H\in \{b_1(H_1+b_2H_2)\in \fa| b_1\in \R, |b_2|<\varepsilon\},\] 
	\[H'\in \{c_1(H'_1+c_2H'_2)\in \fa| c_1\in \R, |c_2|<\varepsilon\}.\]
Here $H_2,H'_2$ are chosen so that $\{H_1,H_2\}$, $\{H'_1,H'_2\}$ are orthonormal bases of $\fa$.


\end{lemma}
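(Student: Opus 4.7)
The plan is to exploit the bilinearity of the Killing form to factorize the phase as $f_{H,H'}(k) = b_1 c_1 \cdot f_1(b_2,c_2;k)$, with $b_1 c_1$ playing the role of the large parameter and $(b_2,c_2)$ playing the role of the small auxiliary parameter already dissected in Section~4. With this factorization, the right-hand side of the target estimate, for the Weyl element $s \in \rW$ associated to the critical geometry at $k_0$, reads
$$\prod_{i=1}^{3} \bigl(1 + |b_1 c_1 \cdot s\alpha_i(H_1+b_2H_2)\alpha_i(H_1'+c_2H_2')|\bigr)^{-1/2} = \Omega(s^{-1}H,H')^{-1/2},$$
and the argument splits into two cases depending on whether $k_0$ is a critical point of $f_{H_1,H_1'}$.

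If $k_0 \notin \bigcup_{s \in \rW} \rK_{H_1} s \rK_{H_1'}$, then by continuity of the differential in all variables the norm of the $\rK$-gradient of $f_1(b_2,c_2;\cdot)$ stays bounded below on some neighborhood $U$ of $k_0$ whenever $(b_2,c_2)$ lies in a small square; three integrations by parts then deliver a bound of the form $C_U \len{u}_{C^3}(1+|b_1 c_1|)^{-3}$, which dominates the $s=e$ term $\Omega(H,H')^{-1/2}$ on the right. In the opposite case $k_0 \in \rK_{H_1} s \rK_{H_1'}$ for some $s \in \rW$, the plan is to invoke exactly one of the results of Section~4 (Theorem~\ref{thm-n1}, Theorem~\ref{thm-n2}, Theorem~\ref{thm-n5}, or one of the corollaries) to obtain a parameter-preserving local coordinate system $\kappa:S_1\to\R^5$ casting $f_1\circ\kappa^{-1}$ into the shape
$$f_1(b_2,c_2;k_0) + \sum_{i=1}^{3}\lambda_i(b_2,c_2)P_i(\cx_i),$$
where each $P_i$ is either $\cx_i^2$ or $\cx_i$ and each $\lambda_i(b_2,c_2)$ is a nonzero constant multiple of $s\alpha_i(H_1+b_2H_2)\alpha_i(H_1'+c_2H_2')$. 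Pulling the integral back through $\kappa$ and applying in each $\cx_i$ the appropriate one-variable uniform bound---stationary phase $|\int e^{\iu\tau\cx^2}\phi\,d\cx|\leq C(1+|\tau|)^{-1/2}\len{\phi}_{C^2}$ for the quadratic coordinates, and integration by parts $|\int e^{\iu\tau\cx}\phi\,d\cx|\leq C(1+|\tau|)^{-1}\len{\phi}_{C^1}$ for the linear ones---each contributes a factor $(1+|b_1c_1\lambda_i(b_2,c_2)|)^{-1/2}$, and the product reproduces exactly $\Omega(s^{-1}H,H')^{-1/2}$.

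The main obstacle is uniformity. The Jacobian $|\det d\kappa^{-1}|$ and the composed amplitude $\chi(\kappa^{-1}(\cx))u(\kappa^{-1}(\cx))|\det d\kappa^{-1}|$, together with derivatives up to order three, must be controlled by $\len{u}_{C^3}$ uniformly over the compact parameter window $|b_2|,|c_2|\leq\varepsilon/2$; this is handed over for free by the smooth dependence of $\kappa$ on $(b_2,c_2)$ built into the constructions of Section~4, once $U$ and $\varepsilon$ are shrunk enough. The subtler point is that the coefficients $\lambda_i(b_2,c_2)$ can vanish in the parameter window (when $b_2=0$, $c_2=0$, or when $\alpha_i$ vanishes on $H_1$ or $H_1'$), which is precisely the instability of the critical set highlighted in the introduction; the rescue is that the bounds $(1+|\tau\lambda_i|)^{-1/2}$ degenerate gracefully through $\lambda_i=0$ into the trivial estimate $1$, so a single formula handles the regular and singular regimes at once. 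The only remaining bookkeeping is to verify that the finitely many sub-cases produced by the partition of the critical manifold in Section~4 exhaust the possibilities for $k_0$, which they do by construction.
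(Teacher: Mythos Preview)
Your proposal is correct and follows essentially the same route as the paper: factor the phase as $b_1c_1\,f_1(b_2,c_2;k)$, handle the noncritical case by three integrations by parts, and in the critical case pull back through the parameter-preserving chart from Section~4 and apply a uniform stationary-phase bound coordinate by coordinate. The only presentational difference is that the paper isolates your iterated one-variable estimate into a standalone multi-parameter lemma (Lemma~\ref{lem-os}), proved by induction on $d$ with the $C^d$ norm, and then invokes it once; your inline description of applying the $(1+|\tau|)^{-1/2}$ and $(1+|\tau|)^{-1}$ bounds variable by variable is exactly the content of that induction.
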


It is not hard to see that Lemma \ref{lem-loc2} implies Lemma \ref{lem-loc1}. Fix $k_0\in \rK$. For $(H_1,H'_1)\in \fa\times\fa$ with $\len{H_1}=\len{H'_1}=1$, let $U_{H_1,H'_1},\varepsilon_{H_1,H'_1}$ be chosen according to Lemma \ref{lem-loc2}. Thus we get a cover of $\fa\times\fa$ by the conic subsets 
	\begin{eqnarray*}
	&& \{(b_1(H_1+b_2H_2),c_1(H'_1+c_2H'_2))\in\fa\times\fa|\\
	&&\quuad b_1,c_1\in \R, |b_2|<\varepsilon_{H_1,H'_1},|c_2|<\varepsilon_{H_1,H'_1}\}.
	\end{eqnarray*}
Then there exists a finite subcover by a compactness argument. The intersection of those $U_{H_1,H'_1}$'s in this finite subcover gives the open neighborhood of $k_0$ in Lemma \ref{lem-loc1}.

Before we prove Lemma \ref{lem-loc2}, we give a uniform estimate for the oscillatory integral of a specific type. It could be regarded as a version of van der Corput lemma (cf. Sogge \cite[Lemma 1.1.2]{Sog}) in higher dimensions.

\begin{lemma} \label{lem-os}
Let $d$ be a positive integer. For any $L>0$,  we have
$$
\left|\int_{\R^d} u(x) \exp \frac{\iu}{2} (t_1x_1^2+\dots+t_d x_d^2)dx_1\dots dx_d \right|\lesssim_{d,L}\len{u}_{C^d}\prod_{i=1}^d(1+|t_i|)^{-1/2},
$$
\[\quuad \forall (t_1,\dots,t_d)\in \R^d, u\in \smo_c((-L,L)^d).\]
\end{lemma}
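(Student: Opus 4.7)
The plan is to prove this by induction on the dimension $d$, with the one-dimensional van der Corput lemma for quadratic phases serving as the base case. The key observation is that the phase $\sum_{i=1}^d t_ix_i^2/2$ is separable, so an iterated application of the one-dimensional estimate, combined with Fubini's theorem, produces the product of decay factors.

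For the base case $d=1$, I would split into two regimes. When $|t|\leq 1$, the trivial bound $\left|\int e^{\iu tx^2/2}u\,dx\right|\leq 2L\len{u}_{L^\infty}$ suffices since $(1+|t|)^{-1/2}\geq 1/\sqrt{2}$. When $|t|\geq 1$, cut the integral at $|x|=|t|^{-1/2}$: on the inner region the length of the interval is $2|t|^{-1/2}$, giving a bound of $2|t|^{-1/2}\len{u}_{L^\infty}$; on the outer region the phase derivative satisfies $|tx|\geq |t|^{1/2}$ and is monotone on each half-line, so integration by parts once (writing $e^{\iu tx^2/2}=(\iu tx)^{-1}\partial_x e^{\iu tx^2/2}$) yields a gain of $|t|^{-1/2}$ after accounting for the $L^1$-norm of $\partial_x(u/(tx))$ on $|x|>|t|^{-1/2}\cap [-L,L]$. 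This is the standard van der Corput argument and gives the bound $C_L\len{u}_{C^1}(1+|t|)^{-1/2}$.

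For the inductive step, assume the inequality holds in dimension $d-1$ and define
\[
F(x_2,\dots,x_d):=\int_\R e^{\iu t_1 x_1^2/2}u(x_1,x_2,\dots,x_d)\,dx_1.
\]
Differentiating under the integral sign, for any multi-index $\alpha$ in $(x_2,\dots,x_d)$ with $|\alpha|\leq d-1$, the function $\partial^\alpha F$ is again an integral of the same quadratic-phase type with amplitude $\partial^\alpha u(\cdot,x_2,\dots,x_d)$. Applying the base case pointwise in $(x_2,\dots,x_d)$ yields
\[
|\partial^\alpha F(x_2,\dots,x_d)|\leq C_L\len{u}_{C^{|\alpha|+1}}(1+|t_1|)^{-1/2},
\]
which gives $\len{F}_{C^{d-1}}\leq C_L\len{u}_{C^d}(1+|t_1|)^{-1/2}$. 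Moreover $F$ is supported in $[-L,L]^{d-1}$. The remaining integral
\[
\int_{\R^{d-1}}e^{\iu(t_2x_2^2+\dots+t_dx_d^2)/2}F(x_2,\dots,x_d)\,dx_2\cdots dx_d
\]
is then estimated by the inductive hypothesis, producing the factor $\prod_{i=2}^d(1+|t_i|)^{-1/2}\len{F}_{C^{d-1}}$. Combining the two bounds gives the desired product decay with a constant $C_{d,L}$.

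The main obstacle is purely bookkeeping: one must verify that each round of the induction costs exactly one additional derivative on $u$, so that after $d$ rounds the total regularity needed is $C^d$, matching the statement. The reason this works is that the $C^1$ norm appearing in the one-dimensional base case is applied to the $C^{d-1}$-norm of derivatives of $F$, and derivatives of $F$ in the remaining variables commute with the $x_1$-integration; no additional regularity in $x_1$ is ever required beyond what is absorbed by the base case. No deeper stationary-phase input is needed because the Hessian of the quadratic phase is constantly $\mathrm{diag}(t_1,\dots,t_d)$, so the one-dimensional van der Corput estimate already captures the correct decay in each direction.
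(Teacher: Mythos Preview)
Your proposal is correct and follows essentially the same inductive strategy as the paper: reduce to the one-dimensional van der Corput estimate and use Fubini to iterate. The only difference is the order in which variables are peeled off---you integrate out $x_1$ first and then apply the $(d-1)$-dimensional hypothesis, whereas the paper integrates out $x_1,\dots,x_{d-1}$ first and applies the $d=1$ case to $x_d$---but the derivative bookkeeping is identical.
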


\begin{proof}
We prove the lemma by induction. Let $I$ denote the interval $(-L,L)$. 

When $d=1$, it is an easy consequence of Stein \cite[Corollary, Page 334]{Ste}.

Now we assume the statement holds for $d-1$. Then we prove the statement for $d$. 

We take the partial integral with the first $d-1$ variables,
\[
\begin{split}
\star&=\int_{\R^d} u(x) \exp \frac{\iu}{2}(t_1x_1^2+\dots+t_d x_d^2)dx_1\dots dx_d\\
&=\int_I \tilde{u}(t_1,\dots,t_{d-1};x_d) \exp \frac{\iu}{2} (t_dx_d^2) dx_d,
\end{split}
\]
where $\tilde{u}(t_1,\dots,t_{d-1};x_d)$ is
$$
\int_{\R^{d-1}} u(x) \exp \frac{\iu}{2}(t_1x_1^2+\dots+t_{d-1} x_{d-1}^2)dx_1\dots dx_{d-1}.
$$
It is important that the term $\tilde{u}(t_1,\dots,t_{d-1};\cdot)$ is viewed as a function in $x_d$ and it belongs to $\smo_c(I)$. Then the statement for $d=1$ yields
\[|\star|\lesssim_L\left\|\tilde{u}(t_1,\dots,t_{d-1};\cdot)\right\|_{C^1(I)}(1+|t_d|)^{-1/2}.\]
By definition,
	\begin{eqnarray*}
	&&\len{\tilde{u}(t_1,\dots,t_{d-1};\cdot)}_{C^1(I)}\\
	&&\quad\quad\quad =\max \left\{\sup_{x_d\in \R}|\tilde{u}(t_1,\dots,t_{d-1};x_d)|,\sup_{x_d\in \R}\left|\partial_{x_d}\tilde{u}(t_1,\dots,t_{d-1};x_d)\right|\right\}.
	\end{eqnarray*}

According to the statement for $d-1$, we have
	\[|\tilde{u}(t_1,\dots,t_{d-1};x_d)|\lesssim_{d-1,L}\len{u(\cdot,x_d)}_{C^{d-1}(I^{d-1})}\prod_{i=1}^{d-1}(1+|t_i|)^{-1/2},\]
	here $u(\cdot,x_d)$ with fixed $x_d$ is viewed as a function in $\smo_c(I^{d-1})$. Hence,
	\[\sup_{x_d\in \R} |\tilde{u}(t_1,\dots,t_{d-1};x_d)|\lesssim_{d-1,L}\len{u}_{C^{d-1}(I^d)}\prod_{i=1}^{d-1}(1+|t_i|)^{-1/2}\]
	for
	$\sup_{x_d\in \R} \len{u(\cdot,x_d)}_{C^{d-1}(I^{d-1})}\leq \len{u}_{C^{d-1}(I^d)}$.

Since
	\[[\partial_{x_d}\tilde{u}](t_1,\dots,t_{d-1};x_d)=\int_{\R^{d-1}} [\partial_{x_d}u](x) \exp \frac{\iu}{2}(t_1x_1^2+\dots+t_{d-1} x_{d-1}^2)dx_1\dots dx_{d-1},\]
it is similar to get
\[\sup_{x_d\in \R} |[\partial_{x_d}\tilde{u}](t_1,\dots,t_{d-1};x_d)|\lesssim_{d-1,L}\len{\partial_{x_d} u}_{C^{d-1}(I^d)}\prod_{i=1}^{d-1}(1+|t_i|)^{-1/2}.\]
Note that $\len{\partial_{x_d} u}_{C^{d-1}(I^d)}\leq \len{u}_{C^d(I^d)}$. Then the statement for $d$ is proved.
\end{proof}

\begin{proof}[Proof of Lemma \ref{lem-loc2}]
We set $H=b_1(H_1+b_2H_2),H'=c_1(H'_1+c_2H'_2)$, then have 
	\[f_{H,H'}(k)=b_1c_1 f_{H_1+b_2H_2,H'_1+c_2H'_2}(k).\]

In Section 4, we have worked out different local normal forms of $f_{H,H'}(k)$ when $H_1,H'_1$ and $k_0$ vary. Here we should derive the local estimates according to the local normal forms. We shall keep in mind that some local estimates do saturate the upper bound (or a term of it) while some local estimates do not. Although there seem to be many different situations, we only discuss two situations here. It will be straightforward to verify the remaining cases since the arguments are essentially similar.

We first consider the simple case when $k_0$ is not a critical point of $f_{H_1,H'_1}$. We do not work on the local normal forms for this case in Section 4 because the corresponding local estimate will be derived easily in the following way. It is clear that $k_0$ remains not to be a critical point of $f_{H_1+b_2H_2,H'_1+c_2H'_2}$ when $|b_2|,|c_2|\leq \varepsilon$, here $\varepsilon$ is chosen sufficiently small. If we choose the neighborhood $U$ of $k_0$ to be sufficiently small, then the points inside $U$ have the same property as well as $k_0$. Given $\chi\in \smo_c(U,\C)$, integration by parts yields that 
	\[\left |\int_\rK u(k) \chi(k) e^{\iu f_{H,H'}(k)}dk\right|\lesssim_{U,\varepsilon,\chi} \len{u}_{C^3}(1+|b_1c_1|)^{-3} \]
for $u\in \smo(\rK,\C)$. Here the order $3$ in $C^3$ matches the exponent $-3$, which means that the integration by parts is applied for 3 times. It is obvious that
	\[(1+|b_1c_1|)^{-3}\lesssim \Omega(s^{-1}H,H')^{-1/2},\]
\[\quuad \forall s\in \rW,b_1,c_1\in \R, b_2,c_2\in (-\varepsilon,\varepsilon).\]

Next, we discuss the case of Theorem \ref{thm-n1}. We could assume the coordinate patch of $\kappa$ is of the form $(-\varepsilon,\varepsilon)\times(-\varepsilon,\varepsilon)\times U$. Then $\varepsilon$, resp. $U$ is the constant, resp. neighborhood of $k_0$ which the lemma desires. For fixed $b_2,c_2$, we let $\kappa(b_2,c_2;\cdot):U\to \R^3$ denote the map $k\to \kappa(b_2,c_2;k)$. We have 
	\[f_{H,H'}(k)=f_{H,H'}(k_0)-\frac{1}{2}\sum_{i=1}^3 s\alpha_i(H)\alpha_i(H')\cx_i^2,\]
here $\cx_i:(-\varepsilon,\varepsilon)\times(-\varepsilon,\varepsilon)\times U\to \R$ depends on $b_2,c_2$ but does not depend on $b_1,c_1$. 

Let $\chi\in \smo_c(U,\C)$. Then the integral over $\rK$ becomes an integral over $\R^3$,
	\begin{eqnarray*}
	&&\int_\rK u(k) \chi(k) e^{\iu f_{H,H'}(k)}dk\\
	&&\quad \quad \quad\quad =\pm e^{\iu f_{H,H'}(k_0)}\int_{\R^3} \tilde{u}(b_2, c_2;\cx)\exp \frac{\iu}{2} \left(\sum_{i=1}^3 -s\alpha_i(H)\alpha_i(H')\cx_i^2\right) d\cx,
	\end{eqnarray*}
here $[\kappa(b_2,c_2;\cdot)]^*\tilde{u}(b_2, c_2;\cx)d\cx=\chi(k)u(k)dk$ and the sign is determined by whether $\kappa$ preserves the orientation of $\rK$. Beware that the sign does not matter since we always consider the absolute value of the integral. 

We know that there exists $L>0$ such that $[\kappa(b_2,c_2;\cdot)]\supp \chi\subseteq [-L,L]^3$ when $|b_2|,|c_2|<\varepsilon$. Applying Lemma \ref{lem-os}, we show that the absolute value of the integral is 
	\[\lesssim_{L}\len{\tilde{u}(b_2,c_2;\cdot)}_{C^3([-L,L]^3)}\Omega(s^{-1}H,H')^{-1/2}.\]
According to the property of the norm $\len{\cdot}_{C^3(\rK,\C)}$, we have
	\[\len{\tilde{u}(b_2,c_2;\cdot)}_{C^3([-L,L]^3)}\lesssim_{\kappa,\chi} \len{u}_{C^3(\rK,\C)},\quad \forall u\in \smo(\rK,\C),b_2,c_2\in (-\varepsilon,\varepsilon).\]
Here the implicit constant depends on $\kappa$ and $\chi$ but does not depend on $u$.

Regarding other cases, we could derive the local estimate similarly. We mention a few points  in brief. In the case of Corollary \ref{cor-n4}, $\cx_3^2/2$ is replaced by $\cx_3$ in the local normal form. It will not hurt our local estimate since the integral decays faster than the bound demands.  In the case of Theorem \ref{thm-n5}, we do not know $\kappa^{-1}(b_2,c_2;0)$ when $b_2,c_2$ vary. But to apply Lemma \ref{lem-os}, it suffices to have $\kappa(b_2,c_2;\cdot)\supp \chi$ contained in a cube with a bounded size when $|b_2|,|c_2|<\varepsilon$.
\end{proof}

\subsection{Estimate for oscillatory integral \eqref{eq:ost1}}
We have established the estimate for the oscillatory integral \eqref{eq:ost2}. To deduce the estimate for the oscillatory integral \eqref{eq:ost1}, we need the following result proved by Duistermaat, which is proved for general real connected semisimple Lie groups.

\begin{theorem}[\cite{D} Theorem 1.1] 
\label{thm-dui}
There is a real analytic map $\Psi:\fs\to \rK$ such that 
	\[\HC\circ \exp\left[\Ad (\Psi(Y)^{-1})Y\right]=\pr{Y}{\fs}{\fa},\quad Y\in \fs.\] 
Furthermore, the map $\Phi_Y:\rK\to \rK, k\to k\cdot \Psi(\Ad(k^{-1})Y)$ is a diffeomorphism for each $Y\in \fs$. 
\end{theorem}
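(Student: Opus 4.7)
My strategy is to construct $\Psi$ as a real-analytic solution of the implicit equation $\mathcal{E}(\Ad(\Psi(Y)^{-1}) Y) = \pr{Y}{\fp}{\fa}$, where $\mathcal{E}(Y) := \HC(\exp Y)$, and then derive the diffeomorphism property of $\Phi_Y$ from uniqueness plus compactness of $\rK$. The key linearization is that $d\mathcal{E}_H|_\fa = \mathrm{id}$ and $d\mathcal{E}_H|_\fq = 0$ for every $H \in \fa$; in particular $d\mathcal{E}_0 = \pr{\cdot}{\fp}{\fa}$. The $\fa$-part is immediate from $\mathcal{E}|_\fa = \mathrm{id}$; for the $\fq$-part I would write $q_\alpha = X_\alpha - \theta X_\alpha \in \fp_\alpha$, compute $d\exp_H(q_\alpha) = \exp(H)\cdot\phi(\ad H)(q_\alpha)$ with $\phi(z) = (1 - e^{-z})/z$, and use the decomposition $\fg = e^{-\ad H}\fk \oplus \fa \oplus \fn$ to read off $d\HC_{\exp H}$, in which both $q_\alpha$ and $\xi_\alpha := X_\alpha + \theta X_\alpha$ have vanishing $\fa$-component. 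Thus $\mathcal{E}$ and $\pr{\cdot}{\fp}{\fa}$ agree identically along $\fa \subset \fp$ to first order in the transverse directions.

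The naive implicit function theorem at $(X, Y) = (0, H)$ with $H \in \fa^+$ is therefore doubly degenerate: both $\partial_X F$ and $\partial_Y F$ vanish for $F(X, Y) := \mathcal{E}(\Ad(\exp(-X)) Y) - \pr{Y}{\fp}{\fa}$, so one must analyze $F$ to second order. Expanding $\Ad(\exp(-X))(H + Q)$ via Baker–Campbell–Hausdorff, the leading nonzero piece of $F$ in the scaling $X, Q = O(\epsilon)$ is a quadratic form on $\fk \oplus \fq$ valued in $\fa$; its pure-$X$ contribution receives $-\sum_\alpha \alpha(H)\|X^\alpha\|^2 \alpha^\vee$ from $\tfrac12[X,[X,H]]$ (with $X = \sum_\alpha X^\alpha$, $X^\alpha \in \fk_\alpha$), augmented by bilinear cross-terms in $(X, Q)$ and a pure-$Q$ piece coming from $d^2\mathcal{E}_H|_\fq$. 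For $H \in \fa^+$ all $\alpha(H)$ are nonzero, and I would show the resulting homogeneous quadratic equation in $\fk \oplus \fq$ admits a unique real-analytic graph $X = L(Q)$ modulo $\fm$ (which acts trivially on $\fa$), by diagonalising the $X$-quadratic form in the $\alpha^\vee$ basis and applying a secondary implicit function theorem to the rescaled system. This produces $\Psi$ on a neighbourhood of $\fa^+ \subset \fp$.

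To globalise and deduce the diffeomorphism property of $\Phi_Y$, I would invoke Kostant's convexity theorem: for any $Y \in \fp$ both $\mathcal{E}(\Ad(\rK) Y)$ and $\pr{\Ad(\rK) Y}{\fp}{\fa}$ coincide with the convex hull of the Weyl orbit of the Cartan projection of $Y$, so $\pr{Y}{\fp}{\fa}$ always lies in the image $\mathcal{E}(\Ad(\rK) Y)$, the defining equation is globally solvable, and real-analytic continuation extends $\Psi$ from a neighbourhood of $\fa^+$ to all of $\fp$. For $\Phi_Y(k) = k\cdot \Psi(\Ad(k^{-1}) Y)$: its differential is invertible at every $k \in \rK$ by left-translating the local Jacobian from the implicit function theorem, so $\Phi_Y$ is a local diffeomorphism; its bijectivity follows from the uniqueness of $\Psi$, and since $\rK$ is compact this upgrades to a global diffeomorphism. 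The main obstacle will be the doubly degenerate linearisation along $\fa$, which forces the second-order analysis above, together with extending $\Psi$ real-analytically across the walls of $\fa^+$ where the factors $\alpha(H)$ vanish and the $X$-quadratic form develops a kernel; handling both cleanly is the technical heart of \cite{D}.
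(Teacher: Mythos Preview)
The paper does not prove this theorem. It is quoted verbatim as \cite[Theorem 1.1]{D} and used as a black box to pass from the oscillatory integral \eqref{eq:ost1} to \eqref{eq:ost2}; there is no argument for it in the paper, only the one-line attribution ``Duistermaat proved the following result for general real connected semisimple Lie groups.'' So there is no proof in the paper to compare your proposal against.

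As for your sketch on its own merits: the linearization $d\mathcal{E}_H|_\fa=\id$, $d\mathcal{E}_H|_\fq=0$ is correct and is indeed the starting point of Duistermaat's argument. But the remainder of your plan has real gaps. The double degeneracy you identify is genuine, and a ``secondary implicit function theorem'' applied to a homogeneous quadratic equation does not by itself produce a real-analytic graph $X=L(Q)$ --- homogeneous quadratics have no preferred branch at the origin without further structure, and you have not exhibited that structure. More seriously, Kostant's convexity theorem only tells you that the \emph{images} $\mathcal{E}(\Ad(\rK)Y)$ and $\mathrm{pr}_{\fp,\fa}(\Ad(\rK)Y)$ coincide as sets; it does not give existence of a single real-analytic $\Psi$ intertwining them, nor does it give uniqueness, so your globalisation step and your bijectivity argument for $\Phi_Y$ are both unsupported. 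Duistermaat's actual proof in \cite{D} proceeds differently: he constructs $\Psi$ by solving an explicit ODE (a gradient-type flow) rather than by an implicit function theorem, which is precisely how he circumvents the degeneracy and the wall-crossing issue you flag at the end.
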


Now we deduce the estimate for the oscillatory integral \eqref{eq:ost1} by Theorem \ref{est-1}.

\begin{proposition} \label{est-2}
For any compact subset $\omega$ of $\fa$, we have
	\[\left|\int_{\rK} u(k) e^{\lambda(\HC(\exp H\cdot k))}dk\right |\lesssim_\omega \len{u}_{C^3(\rK,\C)}\sum_{s\in \rW} \Omega(s^{-1}H,-\iu \lambda^\vee)^{-1/2},\]
\[\quuad \forall  H\in \omega,\lambda\in \crFI, u\in \smo(\rK,\C).\]
\end{proposition}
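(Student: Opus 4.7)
The plan is to reduce Proposition \ref{est-2} to Theorem \ref{est-1} by using Duistermaat's Theorem \ref{thm-dui} to perform a smooth change of variable in $\rK$ that straightens out the Iwasawa projection into the simpler orthogonal projection $\mathrm{pr}_{\fp,\fa}$. First I would derive the pointwise identity
\[
	\HC(\exp Y \cdot \Psi(Y)) = \pr{Y}{\fp}{\fa}, \quad Y\in\fp,
\]
directly from Theorem \ref{thm-dui}: since $\Psi(Y)^{-1}\exp Y\cdot\Psi(Y)=\exp[\Ad(\Psi(Y)^{-1})Y]$, the left-$\rK$-invariance of $\HC$ converts the first expression into $\HC\circ\exp[\Ad(\Psi(Y)^{-1})Y]$, which is $\pr{Y}{\fp}{\fa}$ by Duistermaat. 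Applying this with $Y=\Ad(k^{-1})H\in\fp$ and again using left-$\rK$-invariance gives
\[
	\HC(\exp H\cdot \Phi_H(k)) \;=\; \pr{\Ad(k^{-1})H}{\fp}{\fa},
\]
where $\Phi_H(k)=k\cdot\Psi(\Ad(k^{-1})H)$.

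Next, I would perform the change of variable $k\mapsto \Phi_H(k)$ in the integral. Since $\Phi_H:\rK\to\rK$ is a diffeomorphism by Theorem \ref{thm-dui} and depends smoothly on $H$, there is a smooth positive Jacobian $J_H(k)$ so that
\[
	\int_{\rK}e^{\iu\lambda(\HC(\exp H\cdot k'))}u(k')\,dk' \;=\; \int_{\rK}e^{\iu\lambda\bigl[\pr{\Ad(k^{-1})H}{\fp}{\fa}\bigr]}\,u(\Phi_H(k))\,J_H(k)\,dk.
\]
Since $\lambda^\vee\in\fa$ is $B$-orthogonal to $\fq=\fp\ominus\fa$, one has $\lambda[\pr{\Ad(k^{-1})H}{\fp}{\fa}]=B(\lambda^\vee,\Ad(k^{-1})H)=B(H,\Ad(k)\lambda^\vee)$, so the transformed phase is exactly the one appearing in Theorem \ref{est-1} with $H'=\lambda^\vee$. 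Theorem \ref{est-1} applied to the new amplitude $\tilde u_H(k):=u(\Phi_H(k))\,J_H(k)$ then yields
\[
	\Bigl|\int_{\rK} e^{\iu\lambda(\HC(\exp H\cdot k))}u(k)\,dk\Bigr| \;\leq\; C\,\lVert \tilde u_H\rVert_{C^3(\rK,\C)}\sum_{s\in\rW}\Omega(s^{-1}H,\lambda^\vee)^{-1/2}.
\]

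The final task is to absorb $\lVert \tilde u_H\rVert_{C^3}$ into $\lVert u\rVert_{C^3}$ uniformly for $H\in\omega$. Since $H\mapsto\Phi_H$ is smooth in $(H,k)$ and $\omega$ is compact, the derivatives up to order $3$ of $\Phi_H$ and of $J_H$ are uniformly bounded on $\omega\times\rK$, and $J_H$ is bounded away from zero; by the chain and product rules this gives $\lVert \tilde u_H\rVert_{C^3(\rK,\C)}\leq C_\omega\lVert u\rVert_{C^3(\rK,\C)}$ for a constant depending only on $\omega$. Combining this with the previous estimate proves the proposition. The only delicate point is the uniform control of the $C^3$ norm of $\tilde u_H$, but this is a purely qualitative consequence of compactness of $\omega$ and the smoothness of Duistermaat's map $\Psi$, so no further analytic input is required beyond Theorem \ref{est-1} and Theorem \ref{thm-dui}.
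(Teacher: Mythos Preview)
Your proposal is correct and follows essentially the same approach as the paper: change variables via Duistermaat's diffeomorphism $\Phi_H$ to convert the Iwasawa phase into $B(H,\Ad(k)\lambda^\vee)$, apply Theorem \ref{est-1} to the new amplitude $u(\Phi_H(k))J_H(k)$, and then absorb its $C^3$ norm uniformly in $H\in\omega$ by compactness and the smooth dependence of $\Phi_H$ on $H$. You even spell out the derivation of the identity $\HC(\exp H\cdot\Phi_H(k))=\pr{\Ad(k^{-1})H}{\fp}{\fa}$ more explicitly than the paper does.
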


\begin{proof}
Let $H'=-\iu\lambda^\vee$. 
With the diffeomorphism $\Phi_H:\rK\to \rK$ given by Theorem \ref{thm-dui}, we have
	\[\int_{\rK} u(k)e^{\lambda(\HC(\exp H\cdot k))}dk=\pm \int_\rK \Phi_H^*\Big (u(k) e^{\lambda(\HC(\exp H\cdot k))}dk\Big ),\]
here the sign is up to whether $\Phi_H$ preserves the orientation.

According to the property of $\Psi$ in Theorem \ref{thm-dui}, we have
	\[\begin{split}
		&\quad \lambda\left[\HC(\exp H\cdot \Phi_H(k))\right]=\lambda\left[\pr{\Ad(k^{-1})H}{\fs}{\fa}\right]\\
		&=\iu B(\Ad(k^{-1})H,H')=\iu f_{H,H'}(k).
	\end{split}\]
	Then the integral is equal to
	\[\pm\int_{\rK}  u(\Phi_H(k))J_H(k) e^{\iu f_{H,H'}(k)}dk,\]
here $\Phi_{H}^*(dk)=J_H(k)dk, J_H\in \smo(\rK,\C)$. Theorem \ref{est-1} tells that the absolute value of the integral is
	\[\lesssim \len{u\circ \Phi_H\cdot J_H}_{C^3(\rK,\C)}\sum_{s\in \rW}\Omega(s^{-1}H,H')^{-1/2}.\]
For fixed $H$, we have
	\[\len{u\circ \Phi_H\cdot J_H}_{C^3(\rK,\C)}\leq C_H\len{u}_{C^3(\rK,\C)},\]
here $C_H$ depends on $H$ continuously because $\Phi_H$ depends on $H$ smoothly. When $H$ belongs to a compact subset $\omega$, $C_H$ is bounded from above and the estimate is proved.
\end{proof}

At this point Theorem \ref{est-sphfun} is immediate.
\begin{proof}[Proof of Theorem \ref{est-sphfun}]
Let $u_H(k)=e^{-\rho(\HC(\exp H\cdot k))}$. We have
	\[\varphi_\lambda(\exp H)=\int_\rK u_H(k) e^{\lambda(\HC(\exp H \cdot k))}dk.\]
When $H$ belongs to the compact subset $\omega$, $\len{u_H}_{C^3(\rK,\C)}$ is bounded from above. Then Proposition \ref{est-2} implies our estimate for the spherical functions.
\end{proof}

\section{An application to eigenfunctions on locally symmetric spaces}
 
In this section, we apply Theorem \ref{est-sphfun} to the study of asymptotic behaviors of eigenfunctions on the compact quotient of $\SL(3,\R)/\SO(3)$. The main goal is Theorem \ref{est-eigen0}. Our framework involves semisimple Lie groups, globally symmetric spaces and locally symmetric spaces, so we first go through the notations and facts that will be used in the subsequent work. Some parts of the framework date back to Selberg \cite{Sel}. A comprehensive treatment could be found in \cite[Section 2 and 3]{DKV79}.


\subsection{Setting}
We start with the globally symmetric space $\sS=\SL(3,\R)/\SO(3)$. Its rank $\rk(\sS)$ is 2. Let $\invdiff(\sS)$ be the ring of $\rG$-invariant differential operators on $\sS$. It is well-known that $\invdiff(\sS)$ is generated by two differential operators, one of which is the Laplace-Beltrami operator. We consider the eigenfunctions on $\sS$ defined with respect to $\invdiff(\sS)$ and call them joint eigenfunctions. For a nonzero joint eigenfunction $\psi\in \smo(S)$, we associate to $\psi$ a homomorphism $\chi:\invdiff(\sS)\to \C$ by
	\[D\psi=\chi(D)\psi,\quad \forall D\in \invdiff(\sS).\]
We define the spectrum  $\Lambda(S)$ of $\sS$ to be the set of all such homomorphisms. On the other hand, the spherical functions when regarded as functions on $\sS$ are eigenfunctions to $\invdiff(\sS)$. For $\lambda\in \crF$, let $\chi_\lambda:\invdiff(\sS)\to\C$ be the homomorphism associated to $\varphi_\lambda$. It is known that the spherical functions are $\rW$-invariant in the spectral parameter, that is,
	 \begin{equation}\label{eq:invWeyl}
	 	\varphi_\lambda=\varphi_{s\lambda},\quad \forall s\in \rW,\lambda\in \crF.
	\end{equation}
As a result, $\chi_\lambda=\chi_{\lambda'}$ if $\lambda=s\lambda'$ for some $s\in \rW$. In fact, the converse holds. Furthermore $\Lambda(\sS)=\{\chi_\lambda|\lambda\in \crF\}$(see \cite[Proposition 3.3]{DKV79}). Then $\Lambda(\sS)$ could be identified with the quotient $\crF/\rW$ of $\crF$ under $\rW$. We say that a joint eigenfunction $\psi\in\smo(\sS)$ has the spectral parameter $\rW\cdot \lambda\in \crF/\rW$ if the homomorphism associated to $\psi$ is $\chi_\lambda$. Later, we commit a mild abuse of notation and write the spectral parameter $\lambda\in \crF$ for convenience. In other words, we treat $\Lambda(\sS)$ as $\crF$.

Next we turn to the locally symmetric space $\sX=\Gamma\backslash \sS$, here $\Gamma$ is a discrete subgroup of $\rG$ which acts freely on $\sS$. We assume that $\sX$ is compact. The members of $\invdiff(\sS)$ could be regarded as differential operators on $\sX$, to which we define the joint eigenfunctions on $\sX$ with respect. As $\sS$ is the covering space of $\sX$, we could identify continuous functions on $\sX$ with $\Gamma$-invariant continuous functions on $\sS$. So eigenfunctions on $\sX$ are equivalent to $\Gamma$-invariant eigenfunctions on $\sS$. We define the spectrum $\Lambda(\sX)$ of $\sX$ as for $\Lambda(\sS)$. Naturally $\Lambda(\sX)$ is a subset of $\Lambda(\sS)$. 

We mention some facts about $\Lambda(\sX)\subseteq \crF$. Suppose $\lambda\in \Lambda(\sX)$. \cite[Proposition 2.4]{DKV79} asserts that $\varphi_\lambda$ is positive definite. \cite[Proposition 3.4]{DKV79} tells that $\len{\lambda_R}\leq \len{\rho}$. \cite[Corollary 3.5]{DKV79} tells that there exists $s\in \rW$ such that
	\begin{equation}\label{eq:nonimg1}
		s\lambda_R=-\lambda_R,s\lambda_I=\lambda_I.
	\end{equation}
We observe that if $\lambda\notin \crFI$, then $\lambda_I$ is fixed by some nontrivial $s\in \rW$. It is evident that $\lambda_I$ is singular, that is, there exists $\alpha\in \Rr^+$ such that $\alpha(-\iu\lambda_I^\vee)=0$ (see \cite[Lemma 8.1]{DKV79}).


Our investigation employs group theoretic techniques. Generally our notations are standard, like in Helgason \cite{Helg}. For $f\in C(\rG)$, we define $\check{f},\tilde{f}\in C(\rG)$ by \[\check{f}(g)=f(g^{-1}), \tilde{f}(g)=\conj{f(g^{-1})},\quad g\in \rG.\] The convolution $*$ is defined in the common way. $C(\rG//\rK)$ is the subspace of $\rK$-bi-invariant functions in $C(\rG)$. Suppose $f\in C_c(\rG//\rK)$. For $u\in C(\sS)$, $u*f\in C(\sS)$ makes sense after identifying $u$ as a function on $\rG$. Similarly, for $u\in C(\sX)$, $u*f\in C(\sX)$ makes sense. 

 In our investigation, the Harish-Chandra transform (also called the spherical transform) will play the central role. We give it a brief account while detailed information could be found in \cite[Section 3.4]{DKV79} and \cite[Chapter 6]{GV}. For any $f\in C_c(\rG//\rK)$, its Harish-Chandra transform is the function $\crH f$ defined by
 	\[\crH f(\lambda)=\int_\rG f(g)\varphi_\lambda(g)dg, \quad \lambda\in \crF.\]
The Haar measure $dg$ on $\rG$ is suitably normalized, cf. \cite[Section 3.1]{DKV79}. Due to the $\rW$-invariance of the spherical functions \eqref{eq:invWeyl}, we have
	\[\crH f(\lambda)=\crH f(s\lambda), \quad \lambda\in \crF,s\in \rW.\]
Let $\crS(\crFI)$ be the classical Schwartz space on $\crFI$. Let $\crS(\crFI)^\rW$ be the subspace of $\rW$-invariant elements. Then for $f\in \smo_c(\rG//\rK)$, the restriction of $\crH f$ to $\crFI$ lies in $\crS(\crFI)^\rW$. The Inversion Theorem asserts that for $f\in \smo_c(\rG//\rK)$
	\begin{equation}\label{eq:idinv}
		f(g)=|\rW|^{-1}\int_{\crFI} \crH f(\nu)\varphi_{-\nu}(g)|\bc(\nu)|^{-2}d\nu, \quad g\in \rG.
	\end{equation}
$d\nu$ is the Lebesgue measure on $\crFI$ dual to the Haar measure $da$ on $\rA$. $\bc(\cdot)$ is the Harish-Chandra $\bc$-function and $|\bc(\nu)|^{-2}d\nu$ is the Plancherel measure.
	
Let $\crP(\crF)$ be the space of entire functions on $\crF$. For any $r>0$, let $\crP_r(\crF)$ be the subspace of the elements $h$ with the property: for any integer $n\geq 0$,
	\begin{equation}\label{eq:estPW}
		|h(\lambda)|\lesssim_n (1+\len{\lambda})^{-n}e^{r\len{\lambda_R}}, \quad \forall\lambda\in \crF.
	\end{equation}
Let $\crP(\crF)^\rW$, resp. $\crP_r(\crF)^\rW$ be the subspace of $\rW$-invariant elements in $\crP(\crF)$, resp. $\crP_r(\crF)$. The Paley-Wiener Theorem asserts that the Harish-Chandra transform is a bijection of $\smo_c(\rG//\rK)$ with $\crP(\crF)^\rW$. In addition, for any $r>0$, it is a bijection of the subspace of $\smo_c(\rG//\rK)$ of all functions that have supports contained in $\rG(r)$ (defined below by \eqref{eq:Gnbhd}) with the subspace $\crP_r(\crF)^\rW$.

\subsection{Main result}
We are going to study the restrictions of the joint eigenfunctions to open balls of the maximal flat subspaces in $\sX=\Gamma\backslash \sS$. For our purpose, we could assume that the maximal flat subspace is of the form
 	\[\sE(g_0,r)=\{\Gamma (g_0\exp H)\rK\in \sX| H\in \fa, \len{H}<r\},\quad g_0\in \rG,r>0.\]
Here $r$ is small enough so that the map 
\[\{H\in\fa|\len{H}<r\}\to \sX, H\to \Gamma (g_0\exp H)\rK\]
is an embedding. Besides, the measure on $\sE(g_0,r)$ is chosen to be compatible with the Haar measure on $\rA$ induced from the norm $\len{\cdot}_{\fg}$.

Throughout our discussion, we fix a positive constant $r_0$ associated to $\sX$ as follows. According to \cite[Lemma 2.7]{DKV79}, there exists an open neighborhood $U_0\subseteq\rG$ of $e$ satisfying $U_0=U_0^{-1}=KU_0K$ and $U_0\cap [\gamma]_G=\emptyset$ for $\gamma\in \Gamma, \gamma\neq e$ (here $[\gamma]_\rG$ denotes the $\rG$-conjugacy class of $\gamma$). For $r>0$, we set
	\[A(r):=\{\exp H\in \rA|H\in \fa, \len{H}\leq r\},\]
	\begin{equation}\label{eq:Gnbhd}
		\rG(r):=\rK A(r)\rK=\{\exp X\cdot k\in \rG| k\in \rK, X\in \fs, \len{X}\leq r\}.
	\end{equation}
 We take $r_0\in(0,1/2)$ such that $\rG(4r_0)$ is contained in $U_0$.

Now we are ready to state the main theorem of this section. It is a more precise version of Theorem \ref{est-eigen0}.

\begin{theorem} \label{est-eigen}
Let $\sE_1$ be the maximal flat subspace $\sE(g_0,r_1)$ in $\sX$ with $g_0\in \rG, r_1\in (0,r_0)$. If $\psi\in \smo(\sX)$ is a joint eigenfunction with  spectral parameter $\lambda\in \crF$ and $\len{\psi}_{L^2(\sX)}=1$, then we have
	\[\len{\psi}_{L^2(\sE_1)}\lesssim_{r_0} \Omega_0(-\iu \lambda_I^\vee)^{1/4}.\]
The implicit constant depends on $r_0$ but does not dependent on $g_0$.  Here $\Omega_0:\fa\to [1,\infty)$ is defined by
	\[\Omega_0(H):=\prod_{\alpha\in \Rr^+}(1+|\alpha(H)|).\]
\end{theorem}

Let us take a moment to compare our result with relevant ones.  With respect to the Laplace-Beltrami operator,  the joint eigenfunction $\psi$ with spectral parameter $\lambda\in \Lambda(\sX)$ has eigenvalue  $\len{\rho}^2+\len{\lambda_I}^2-\len{\lambda_R}^2$, see \cite[Section 3.3]{DKV79}. Since $\len{\lambda_R}\leq \len{\rho}$, we have
\[\len{\rho}^2+\len{\lambda_I}^2-\len{\lambda_R}^2\lesssim(1+\len{\lambda_I})^2.\]
Note that $\sX$ has dimension 5 and $\sE_1$ has dimension 2. Then Burq, Gérard, and Tzvetkov \cite[Theorem 3]{BGT} gives 
\[\len{\psi}_{L^2(\sE_1)}\lesssim 1+\len{\lambda_I}.\]
Marshall \cite[Theorem 1.2(c)]{M} gives a power saving \[\len{\psi}_{L^2(\sE_1)}\lesssim (1+\len{\lambda_I})^{3/4},\] 
assuming $\lambda$ lies in conic subset $[0,\infty)\cdot \omega'$ of $\crFI$, here $\omega'$ is a compact subset in $\crFI$ that is bounded away from the singular set. This result does not deal with the situation when the spectral parameter lies outside $\crF_I$. As $\rk(\sS)=2$, there could be infinitely many $\lambda\in \Lambda(\sX)$ lying outside $\crFI$, see \cite[Section 3.5]{DKV79}. We point out that our Theorem \ref{est-eigen} overcomes this drawback. Our estimate satisfies that  $\Omega_0(-\iu \lambda_I^\vee)^{1/4} \lesssim (1+\len{\lambda_I})^{3/4}$ and it is uniform in all spectral paremeters $\lambda\in \Lambda(\sX)$. 
Furthermore, if $\lambda$ is almost singular, namely, $\min_{\alpha\in \Rr^+} |\alpha(-\iu \lambda_I^\vee)|\lesssim 1$, then for such joint eigenfunctions we get a better bound 
\begin{equation}\label{eq:estsing}
\len{\psi}_{L^2(\sE_1)}\lesssim(1+\len{\lambda_I})^{1/2}.
\end{equation}
In particular, this estimate holds for joint eigenfunctions with $\lambda\notin\crFI$. Lastly, we shall clarify that our result as well as Marshall \cite{M} concerns joint eigenfunctions so it does not literally improve the result \cite{BGT} for Laplace-Beltrami eigenfunctions.


The strategy of our proof is similar to Marshall \cite[Section 3]{M}. Marshall implemented group theoretic techniques to establish his results, which was a real endeavor. In fact, group theoretic techniques have disadvantages, which are mainly caused by the higher rank, for instance, the diffculty in obtaining desired asymptotic estimate for spherical functions. The improvement in our Theorem \ref{est-eigen} is attributed to Theorem \ref{est-sphfun}, which is a better estimate for spherical functions compared with \cite[Theorem 1.3]{M} for $\rG=\SL(3,\R)$. However, the expression of our estimate is more complicated, which brings significant challenges to the method. Fortunately, the symmetries from the Weyl group $\rW$ and the root system $\Rr$ will be of great help.

\subsection{Preparations}
Now we make some preparations for the proof of Theorem \ref{est-eigen}. First, we introduce necessary notations and handy facts, which are gathered in Lemma \ref{lem-Rf} and \ref{lem-TTs}. 

\begin{lemma}\label{lem-Rf}
For any $f\in \smo_c(\rG//\rK)$, define $R(f):L^2(\sX)\to L^2(\sX), u\to u*\check{f}$.
\begin{enumerate}
\item $R(f)$  is an integral operator in $L^2(X)$ whose kernel $K_f\in \smo(\sX \times \sX)$ is given by
	\[K_f(\bar{x},\bar{x}')=\sum_{\gamma\in \Gamma} f(x^{-1}\gamma x'), \quad x,x'\in \rG,\bar{x}=\Gamma x\rK, \bar{x}'=\Gamma x'\rK.\]
\item The adjoint of $R(f)$ is $R(\tilde{f})$.
\item For $f,f'\in \smo_c(\rG//\rK)$, we have $R(f*f')=R(f)\circ R(f')$. Besides, the following identity holds
	\[
	\begin{split}
		K_{f*f'}(\bar{x},\bar{x}')&=\sum_{\gamma\in \Gamma}f*f'(x^{-1}\gamma x')\\
						     &=\int_{\sX} K_f(\bar{x},\bar{x}_1)K_{f'}(\bar{x}_1,\bar{x}')d\bar{x}_1,	
	\end{split}\]
	\[x,x'\in \rG,\bar{x}=\Gamma x\rK, \bar{x}'=\Gamma x'\rK.\]
\end{enumerate}
\end{lemma}

Lemma \ref{lem-Rf} is well-known so we skip its proof (see \cite[Proposition 2.2]{DKV79}). We clarify that the measure on $\sX$ is determined by the Haar measures on $\rG$ and $\rK$ and the counting measure on $\Gamma$.

\begin{lemma}\label{lem-TTs}
 Let $\sE_0$ be the maximal flat subspace $\sE(g_0,r_0)$ in $\sX$, with $g_0\in \rG$. For any $f\in \smo_c(\rG//\rK)$ with the support contained in $\rG(r_0)$ and $\chi\in \smo_c(E_0,\R)$, define $T:\smo(\sX)\to \smo_c(\sE_0)$,
	\[Tu(\bar{x})=\chi(\bar{x})\cdot R(f)u(\bar{x}),\quad \bar{x}\in  \sE_0.\]
$T$ could be extended to an integral operator $L^2(X)\to L^2(\sE_0)$. 

Let $T^*:L^2(\sE_0)\to L^2(\sX)$ be the adjoint of $T$. Then $TT^*:L^2(\sE_0)\to L^2(\sE_0)$ has the following expression	
		\[TT^*v(\bar{x})=\chi(\bar{x})\int_{\rA(r_0)} (f*\tilde{f})^{\vee}(a'^{-1}a)\chi(\bar{x}')v(\bar{x}')da',\]
	\[a,a'\in \rA(r_0),\bar{x}=\Gamma g_0a\rK, \bar{x}'=\Gamma g_0a'\rK.\]
\end{lemma}

\begin{proof}
We only prove the expression of $TT^*$. It suffices to show
	\[\int_\sX K_f(\bar{x},\bar{y})K_{\tilde{f}}(\bar{y},\bar{x}') d\bar{y}=(f*\tilde{f})(a^{-1}a').\]
By Lemma \ref{lem-Rf},
	\[\begin{split}K_{f*\tilde{f}}(\bar{x},\bar{x}')&=
		\int_\sX K_f(\bar{x},\bar{y})K_{\tilde{f}}(\bar{y},\bar{x}') d\bar{y}\\
		&=(f*\tilde{f})(a^{-1}a')+\sum_{\gamma\in \Gamma,\gamma\neq e} (f*\tilde{f})(a^{-1}g_0^{-1}\gamma g_0a').
	\end{split}\]
	We show that the terms in summation are all zero. For $\gamma\in \Gamma, \gamma\neq e$, we consider $g=a^{-1}g_0^{-1}\gamma g_0a'$. We shall show that $g$ does not lie in the support of $f*\tilde{f}$. Since $f$ has the support contained in $G(r_0)$, so does $\tilde{f}$. Then $f*\tilde{f}$ has the support contained in $G(2r_0)$. If $g$ lies in the support of $f*\tilde{f}$, then 
		\[g_0^{-1}\gamma g_0=aga'^{-1}\in \rA(r_0)\rG(2r_0)\rA(r_0)\subseteq \rG(4r_0).\] 
But $\rG(4r_0)$ is contained in $U_0$. Then our choice of $U_0$ leads to the contradiction. 
\end{proof}

Lemma \ref{lem-TTs} tells that if we use coordinates of $\rA$ and ignore the cutoff function $\chi$, $TT^*$ is the convolution operator on the abelian subgroup $\rA$ assoicated to the restriction of $(f*\tilde{f})^\vee$ to $\rA$. 

Next, we prove an estimate for the integral of $|\varphi_\lambda|$ over a neighborhood of $e\in \rA$, which will be used in the proof of Theorem \ref{est-eigen}.

\begin{proposition} \label{est-sphint}
	\[I(\lambda):=\int_{A(1)} |\varphi_\lambda(a)|da\lesssim \Omega_0(-\iu \lambda^\vee)^{-1/2}, \quad \lambda \in \crF_I.\]
\end{proposition}
\begin{proof} 

Denote $-\iu\lambda^\vee$ by $H'\in \fa$. We could assume that $H'$ lies in the closure of $\fa^+$ because $I$ and $\Omega_0$ are $\rW$-invariant. We could also assume $\len{H'}$ is large. 

 
Changing the variable $a=\exp H$ and applying Theorem \ref{est-sphfun}, we have
	\[\begin{split}
		I(\lambda)&=\int_{H\in \fa, \len{H}\leq 1} |\varphi_\lambda(\exp H)|dH\\
	                         &\lesssim \sum_{s\in \rW} \int_{H\in \fa, \len{H}\leq 1}  \Omega(s^{-1}H,H')^{-1/2}dH.
	\end{split}\]
We observe that the terms of integrals in the summation over $\rW$ are all equal since the integral region is $\rW$-invariant. So it suffices to estimate one term and we take the term for $s=e$. Now we divide the integral region into pieces in different Weyl chambers. We have
	\[\int_{H\in \fa, \len{H}\leq 1}  \Omega(H,H')^{-1/2}dH=\int_{H\in \fa^+,\len{H}\leq 1} \sum_{w\in \rW} \Omega(sH,H')^{-1/2}dH.\]
The summation over $\rW$ is a finite sum of positive numbers, so the summation could be estimated by looking at the largest term, that is 
 	\[\sum_{w\in \rW} \Omega(sH,H')^{-1/2}\lesssim (\min_{s\in \rW} \Omega(sH,H'))^{-1/2}.\]

We shall look into the minimum and find its explicit expression. Let $\alpha_1,\alpha_2\in \Rr^+$ be the two simple roots with $\alpha_1(H')\leq \alpha_2(H')$, $\alpha_3=\alpha_1+\alpha_2\in \Rr^+$. We have
\begin{equation}\label{eq:ordic}
0\leq\alpha_1(H')\leq \alpha_2(H')\leq \alpha_3(H')
\end{equation}

Take a basis $\{H_1,H_2\}\subseteq \fa^+$ of $\fa$ satisfying
	\[\alpha_1(H_1)=\alpha_2(H_2)=1, \quad \alpha_1(H_2)=\alpha_2(H_1)=0.\]

After calculations, we find $\len{\alpha_1}=\len{\alpha_2}=1/\sqrt{3},\len{H_1}=\len{H_2}=2$ and
	\[\{H\in \fa^+|\len{H}\leq 1\}\subseteq \{H\in \fa^+|0\leq \alpha_1(H),\alpha_2(H)\leq 1/2\}.\]
With the change of coordinates $H=x_1H_1+x_2H_2, x_1,x_2\in (0,1/2)$, we have
	\[I(\lambda)\lesssim \iint_{(0,1/2)^2} (\min_{s\in \rW} \Omega(s(x_1H_1+x_2H_2),H'))^{-1/2}dx_1dx_2.\]
For $H=x_1H_1+x_2H_2\in \fa^+$, the set of values $\{|\alpha(sH)||\alpha\in \Rr^+\}$ does not change when $s$ varies and is always $\{x_1,x_2,x_1+x_2\}$. Due to \eqref{eq:ordic}, $\Omega(sH,H')$ $(s\in \rW)$ must be minimal if
\begin{equation}\label{eq:orddc}
    |\alpha_1(sH)|\geq |\alpha_2(sH)|\geq |\alpha_3(sH)|.
\end{equation} 
And we can verify that given $H\in \fa^+$ satisfying \eqref{eq:ordic}, there exists $s\in \rW$ such that the inequality \eqref{eq:orddc} holds. Let $c_i=\alpha_i(H')$, for $i=1,2,3$. Therefore, we have
\[\begin{split}
&\quad\min_{s\in \rW} \Omega(s(x_1H_1+x_2H_2),H')\\
&=\begin{cases}
	(1+c_1(x_1+x_2))^{-1/2}(1+c_2x_2)^{-1/2}(1+c_3x_1)^{-1/2} & 0<x_1\leq x_2<\frac{1}{2}\\
	(1+c_1(x_1+x_2))^{-1/2}(1+c_2x_1)^{-1/2}(1+c_3x_2)^{-1/2} &0< x_2<x_1<\frac{1}{2}
\end{cases}
\end{split}
\]

As the integral region $(0,1/2)^2$ is symmetric in $x_1,x_2$, the integral over $0<x_1\leq x_2<1/2$ is equal to the integral over $0<x_2<x_1<1/2$. So we only need to deal with the part $0<x_1\leq x_2<1/2$,
	\begin{equation}\label{eq:intdouble}
		\int_0^{1/2}dx_1\int_{x_1}^{1/2} (1+c_1(x_1+x_2))^{-1/2}(1+c_2x_2)^{-1/2}(1+c_3x_1)^{-1/2}dx_2.
	\end{equation}
If $c_1\leq 2$, then the integral \eqref{eq:intdouble} is less then
	\[\int_0^{1/2}dx_1\int_{x_1}^{1/2} (c_2x_2)^{-1/2}(c_3x_1)^{-1/2}dx_2\lesssim (c_2c_3)^{-1/2} \]
If $c_1>2$, then the integral \eqref{eq:intdouble} is less than
	\[\begin{split} 
		&\quad\int_0^{1/2}dx_1\int_{x_1}^{1/2} (c_1x_2)^{-1/2}(c_2x_2)^{-1/2}(c_3x_1)^{-1/2}dx_2\\
		&=(c_1c_2c_3)^{-1/2}\int_0^{1/2}x_1^{-1/2}(\ln(1/2)-\ln x_1)dx_1\\
		&\lesssim (c_1c_2c_3)^{-1/2}.	
	\end{split}\]
Combining two situations, the proof is done.
\end{proof}

\subsection{Proof} 

\begin{proof}[Proof of Theorem \ref{est-eigen}] 

We begin the proof by showing the existence of  $h\in \crP_{r_0}(\crF)^\rW$ that satisfies 
	\begin{equation}\label{eq:condition1}
		h(\nu)\geq 0, \quad \forall\nu\in \crF_I
	\end{equation}
and
	\begin{equation}\label{eq:condition2}
		|h(\nu)|\geq 2,\quad \forall \nu\in \crF, \len{\nu}\leq\len{\rho}.
	\end{equation}

The existence of such $h$ is the starting point of our argument. We use the Harish-Chandra transform to construct such $h$.

As $\varphi_\nu(e)=1$ for all $\nu\in \crF$, there exists $r_2<r_0/2$ such that for all $g\in \rG(r_2)$ and $\nu\in \crF$ with $\len{\nu}\leq \len{\rho}$, we have
 	\begin{equation}\label{eq:Gnbhd3}
 		|\varphi_\nu(g)-1|\leq \frac{1}{3}.
 	\end{equation}	
Now we take a real non-negative function $f\in \smo_c(\rG//\rK)$ that has the support contained in $G(r_2)$ and satisfies $\int_\rG f(g)dg=3$. Regarding its Harish-Chandra transform, we have that for $\nu\in \crF$,
	\[\crH f(\nu)=\int_{\rG(r_2)} f(g)[1+(\varphi_\nu(g)-1)]dg=3+\int_{\rG(r_2)} f(g)(\varphi_\nu(g)-1)dg.\]	
For $\nu\in \crF$ with $\len{\nu}\leq \len{\rho}$, \eqref{eq:Gnbhd3} gives
	\[\Big |\int_{\rG(r_2)} f(g)(\varphi_\nu(g)-1)dg \Big|\leq \int_{\rG(r_2)} f(g)\cdot \frac{1}{3}dg=1.\]
Therefore, for $\nu\in \crF$ with $\len{\nu}\leq \len{\rho}$, we have $|\crH f(\nu)|\geq 2$.  

Define $h$ to be $\crH (f*\tilde{f})$. We verify that $h$ satisfies conditions \eqref{eq:condition1} and \eqref{eq:condition2}. First, $f*\tilde{f}$ has the support contained in $\rG(2r_2)$.  As $r_2<r_0/2$, the Paley-Wiener theorem asserts that $h\in \crP_{r_0}(\crF)^\rW$. As $\crH \tilde{f}(\nu)=\conj{[\crH f(-\conj{\nu})]}$ for $\nu\in\crF$,
	\[\crH (f*\tilde{f})(\nu)=\crH f(\nu)\cdot \crH \tilde{f}(\nu)=\crH f(\nu) \cdot \conj{[\crH f(-\conj{\nu})]}, \quad \nu\in \crF.\]
For $\nu\in \crFI$, we have $-\conj{\nu}=\nu$ and $\crH (f*\tilde{f})(\nu)=|\crH f(\nu)|^2\geq 0$. For $\nu\in \crF$ with $\len{\nu}\leq \len{\rho}$, we have $\len{-\conj{\nu}}=\len{\nu}\leq \len{\rho}$ and $|\crH (f*\tilde{f})(\nu)|\geq 4$.

According to the property \eqref{eq:estPW} of  $\crP_{r_0}(\crF)^\rW$, there exists $L>0$ such that
	\[|h(\nu)|\leq \frac{1}{100},\quad \forall \nu\in \crF, \len{\nu_R}\leq \len{\rho},\len{\nu_I}\geq L.\]

We emphasize that $h$ is fixed when we consider all the joint eigenfunctions. We also remark that our construction of $h$ depends on $r_0$, so in the end our estimate depends on $r_0$. 

For $\lambda\in \Lambda(\sX)$ with $\len{\lambda_I}\geq 2L$, we put
	\begin{equation}\label{eq:sumWeyl}
		h_\lambda(\nu)=\sum_{s\in \rW} h(s\nu-\lambda_I), \quad \nu\in \crF.
	\end{equation}
We remark that we could skip finitely many joint eigenfunctions because our statement is asymptotic. It is clear that there are only finitely many $\lambda\in \Lambda(\sX)$ with $\len{\lambda_I}<2L$.

It is not hard to verify that $h_\lambda\in \crP_{r_0}(\crF)^\rW$. Then the Paley-Wiener Theorem asserts that there exists $f_\lambda\in\smo_c(\rG//\rK)$ satisfying $\crH f_\lambda=h_\lambda$ and its support is contained in $G(r_0)$.

In the upcoming argument, we want the value of $h_\lambda$ at $\lambda$ to stay away from 0. We prove the following claim: for $\lambda\in \Lambda(\sX)$ with $\len{\lambda_I}\geq 2L$,
	\begin{equation}
		|h_\lambda(\lambda)|\geq 1.
	\end{equation}	
We consider two situations $\lambda\in \crFI$ and $\lambda\notin\crFI$. 

For $\lambda\in \crFI$, we have $h_\lambda(\lambda)\geq h(0)\geq 2$ because $h(\nu)\geq 0$ for $\nu\in\crFI$.

For $\lambda\notin\crFI$, we recall that $\lambda_I$ satisfies the property \eqref{eq:nonimg1}, that is, there exists $s\in \rW$ such that $s\lambda_I=\lambda_I$. We consider the terms $h(s\lambda_R+s\lambda_I-\lambda_I)$ in the summation \eqref{eq:sumWeyl}. If $s\lambda_I=\lambda_I$, then $h(s\lambda_R)=h(\lambda_R)$ since $h$ is $\rW$-invariant. If $s\lambda_I\neq \lambda_I$, then $\len{s\lambda_I-\lambda_I}\geq \len{\lambda_I} \geq L$ and $|h(s\lambda_R+s\lambda_I-\lambda_I)|\leq 1/100$. Here we use the geometry of the root system and the property that $\lambda_I$ is singular. Note that $\rW$ has 6 elements. We conclude that $|h_\lambda(\lambda)|\geq 2|h(\lambda_R)|-4/100\geq1$.

So far we have constructed a family of test functions $f_\lambda\in \smo_c(\rG//\rK)$ of which their Harish-Chandra transform $h_\lambda$ satisfy certain properties. We are going to use them to construct the spectral projectors. Then we will deduce our theorem after deriving the estimates for some operator norms.

Let $\sE_0=\sE(g_0,r_0)$. Take a cutoff function $\chi\in \smo_c(\sE_0)$ satisfying $0\leq \chi(\bar{x})\leq 1$, for $\bar{x}\in \sE_0$ and $\chi(\bar{x})=1$ for $\bar{x}\in \sE_1$. We define $R(f_\lambda)$ and $T_\lambda$ as in Lemma \ref{lem-Rf} and Lemma \ref{lem-TTs}.
	As the eigenfunction $\psi$ has the spectral parameter $\lambda\in \crF$, the uniqueness of spherical functions yields
		\[R(f_\lambda)\psi(\bar{x})=\crH f_\lambda(\lambda)\psi(\bar{x})=h_\lambda(\lambda) \psi(\bar{x}),\quad \bar{x}\in \sX.\]
Therefore
		\[T_\lambda \psi(\bar{x}')=h_\lambda(\lambda)\chi(\bar{x}')\psi(\bar{x}'),\quad \bar{x}'\in \sE_0.\]

	As $|h_\lambda(\lambda)|\geq 1$ and $\chi$ takes 1 on $\sE_1$, we have 
		\[\len{\psi}_{L^2(\sE_1)}\leq \len{T_\lambda \psi}_{L^2(\sE_0)}.\]
	 Since $\len{\psi}_{L^2(\sX)}=1$, $\len{T_\lambda \psi}_{L^2(\sE_0)}\leq \len{T_\lambda}_{L^2(\sX)\to L^2(\sE_0)}$. Meanwhile, \[\len{T_\lambda}_{L^2(\sX)\to L^2(\sE_0)}=\len{T_\lambda T_\lambda^*}^{1/2}_{L^2(\sE_0)\to L^2(\sE_0)}.\]
	 So the theorem will be proved if we establish the desired estimate for $T_\lambda T_\lambda^*$. By Lemma \ref{lem-TTs} and Young's inequality, we have 
		\[\len{T_\lambda T_\lambda^*}_{L^2(\sE_0)\to L^2(\sE_0)}\leq \len{f_\lambda*\tilde{f}_\lambda}_{L^1(\rA)}.\] 
Hence our task becomes to estimate $\len{f_\lambda*\tilde{f}_\lambda}_{L^1(\rA)}$. $\tilde{f}_\lambda=f_\lambda$ since $\crH f_\lambda=h_\lambda$ is real on $\crFI$. Since $\supp f_\lambda\subseteq G(r_0)$, $\supp f_\lambda*f_\lambda\subseteq G(2r_0)$. As a result,
		\[\len{f_\lambda*f_\lambda}_{L^1(\rA)}= \len{f_\lambda*f_\lambda}_{L^1(\rA(2r_0))}= \len{f_\lambda*f_\lambda}_{L^1(\rA(1))}.\]
	As $\crH(f_\lambda*f_\lambda)(\nu)=(\crH f_\lambda(\nu))^2=h_\lambda(\nu)^2$ for $\nu\in \crF$, we have for $a\in \rA(1)$
		\[\begin{split}
			|f_\lambda*f_\lambda(a)|&=|\rW|^{-1}\Big |\int_{\crFI} h_\lambda(\nu)^2\varphi_{-\nu}(a)|\bc(\nu)|^{-2}d\nu \Big|\\
							      &\lesssim \int_{\crFI} h_\lambda(\nu)^2|\varphi_{-\nu}(a)||\bc(\nu)|^{-2}d\nu.
		\end{split}\]
	By Cauchy-Schwarz inequality, for $\nu\in \crFI$,
		\[h_\lambda(\nu)^2=\Big(\sum_{s\in\rW} h(s\nu-\lambda_I)\Big)^2\lesssim \sum_{s\in \rW} h(s\nu-\lambda_I)^2.\]
	Then
		\[\len{f_\lambda*f_\lambda}_{L^1(\rA)}\lesssim \sum_{s\in \rW} \int_{\rA(1)}da \int_{\crFI} h(s\nu-\lambda_I)^2 |\varphi_{-\nu}(a)||\bc(\nu)|^{-2}d\nu.\]
	The summation over $\rW$ could be reduced to one term because $\varphi_\cdot(a)$ and $|\bc(\cdot)|^{-2}$ are $\rW$-invariant, which gives
		\[\len{f_\lambda*f_\lambda}_{L^1(\rA)}\lesssim \int_{\rA(1)}da \int_{\crFI} h(\nu-\lambda_I)^2 |\varphi_{-\nu}(a)||\bc(\nu)|^{-2}d\nu.\]
	Changing the order of the integration and applying Proposition \ref{est-sphint}, it turns out
		\[\len{f_\lambda*f_\lambda}_{L^1(\rA)}\lesssim \int_{\crFI} h(\nu-\lambda_I)^2\Omega_0(\iu\nu^\vee)^{-1/2}|\bc(\nu)|^{-2}d\nu.\]
	As $|\bc(\nu)|^{-2}\lesssim \Omega_0(-\iu\nu^\vee)=\Omega_0(\iu\nu^\vee)$ cf. \cite[Section 3.8]{DKV79},
		\[\len{f_\lambda*f_\lambda}_{L^1(\rA)}\lesssim \int_{\crFI} h(\nu-\lambda_I)^2\Omega_0(-\iu\nu^\vee)^{1/2}d\nu.\]
Because $\Omega_0(-\iu\nu^\vee)\leq \Omega_0(-\iu\nu^\vee+\iu\lambda_I^\vee)\Omega_0(-\iu\lambda_I^\vee)$ for $\nu\in \crF_I$ ,  we have
		\[\len{f_\lambda*f_\lambda}_{L^1(\rA)}\lesssim \Omega_0(-\iu\lambda_I^\vee)^{1/2} \int_{\crFI} h(\nu-\lambda_I)^2\Omega_0(-\iu\nu^\vee+\iu \lambda_I^\vee)^{1/2}d\nu.\]
	Changing the coordinates, we get
		\[\int_{\crFI} h(\nu-\lambda_I)^2\Omega_0(-\iu(\nu-\lambda_I)^\vee)^{1/2}d\nu=\int_{\crFI} h(\nu)^2\Omega_0(-\iu\nu^\vee)^{1/2}d\nu.\]
	This integral is convergent because $h^2$ restricted to $\crFI$ decays rapidly and $\Omega_0(-\iu \nu^\vee)\lesssim (1+\len{\nu})^3$.
	We get 
		\[\len{f_\lambda*f_\lambda}_{L^1(\rA)}\lesssim \Omega_0(-\iu\lambda_I^\vee)^{1/2}.\]
	In the end, we have
		\[\len{\psi}_{L^2(\sE_1)}\leq\len{f_\lambda*f_\lambda}_{L^1(\rA)}^{1/2}\lesssim  \Omega_0(-\iu\lambda_I^\vee)^{1/4}.\]

	
\end{proof}


%
%



\end{document}